\numberwithin{equation}{section}
\let\OLDthebibliography\thebibliography
\renewcommand\thebibliography[1]{
  \OLDthebibliography{#1}
  \setlength{\parskip}{0pt}
  \setlength{\itemsep}{2pt plus 0.5ex}
}
\def\@cite#1#2{{\m@th\upshape\bfseries%
[{#1\if@tempswa{\m@th\upshape\mdseries, #2}\fi}]}}
\theoremstyle{plain}
\newtheorem{theorem}{Theorem}[section]
\newtheorem{corollary}[theorem]{Corollary}
\newtheorem{proposition}[theorem]{Proposition}
\newtheorem{lemma}[theorem]{Lemma}
\theoremstyle{definition}
\newtheorem{definition}[theorem]{Definition}
\newtheorem{example}[theorem]{Example}
\newtheorem{remark}[theorem]{Remark}
\newtheorem*{acknow}{Acknowledgements}
\theoremstyle{remark}
  \newcommand{\A}{{\mathcal{A}}}
  \newcommand{\B}{{\mathcal{B}}}
  \newcommand{\C}{{\mathcal{C}}}
  \newcommand{\F}{{\mathcal{F}}}
\renewcommand{\H}{{\mathcal{H}}}
  \newcommand{\I}{{\mathcal{I}}}
  \newcommand{\K}{{\mathcal{K}}}
\renewcommand{\L}{{\mathcal{L}}}
  \newcommand{\N}{{\mathcal{N}}}
\renewcommand{\O}{{\mathcal{O}}}
\renewcommand{\S}{{\mathcal{S}}}
  \newcommand{\T}{{\mathcal{T}}}
  \newcommand{\X}{{\mathcal{X}}}
  \newcommand{\Y}{{\mathcal{Y}}}
\def\al{\alpha}
\def\be{\beta}
\def\ga{\gamma}
\def\De{\Delta}
\def\de{\delta}
\def\la{\lambda}
\newcommand{\bC}{\mathbb{C}}
\newcommand{\bZ}{\mathbb{Z}}
\newcommand{\fA}{{\mathcal{A}}}
\newcommand{\fG}{{\mathfrak{G}}}
\newcommand{\fg}{{\mathfrak{g}}}
\newcommand{\fh}{{\mathfrak{h}}}
\newcommand{\foral}{\text{ for all }}
\newcommand{\qand}{\quad\text{and}\quad}
\newcommand{\qiff}{\quad\text{if and only if}\quad}
\newcommand{\qfor}{\quad\text{for}\quad}
\newcommand{\qforal}{\quad\text{for all}\ }
\newcommand{\ca}{\mathrm{C}^*}
\newcommand{\cenv}{\mathrm{C}^*_{\textup{env}}}
\newcommand{\ol}{\overline}
\newcommand{\wt}{\widetilde}
\newcommand{\wh}{\widehat}
\newcommand{\sot}{\textsc{sot}}
\newcommand{\ad}{\operatorname{ad}}
\newcommand{\Aut}{\operatorname{Aut}}
\newcommand{\env}{\operatorname{\textup{env}}}
\newcommand{\id}{{\operatorname{id}}}
\newcommand{\mt}{\emptyset}
\newcommand{\spn}{\operatorname{span}}
\newcommand{\sca}[1]{\left\langle#1\right\rangle} 
\newcommand{\nor}[1]{\left\Vert #1\right\Vert} 
\newcommand{\quo}[2]{{\raisebox{.1em}{$#1$}\left/ \, \raisebox{-.1em}{$#2$}\right.}} 
\newtheorem*{theorem*}{Theorem}
\newtheorem*{corollary*}{Corollary}
\newtheorem*{proposition*}{Proposition}
\newtheorem*{lemma*}{Lemma}
\newtheorem*{remark*}{Remark}
\newtheorem*{definition*}{Definition}
\def\lcm{\operatorname{lcm}}
\def\dep{\ol{\de}^+}
\def\couni{C}
\def\iotenv{\iota_{\textup{env}}}
\def\delenv{\delta_{\textup{env}}}
\def\redcov{\T_\la(X)/ q_\la(\I_\infty)}
\begin{document}
\title[C*-envelopes for coactions on operator algebras]
{C*-envelopes for operator algebras with a coaction and co-universal C*-algebras for product systems}
\date{22 December 2020, modified 19 January  2022}

\author[A. Dor-On]{A. Dor-On}
\address{Department of Mathematical Sciences\\ University of Copenhagen \\ Copenhagen \\ Denmark}
\email{adoron@math.ku.dk}

\author[E.T.A. Kakariadis]{E.T.A. Kakariadis}
\address{School of Mathematics, Statistics and Physics\\ Newcastle University\\ Newcastle upon Tyne\\ NE1 7RU\\ UK}
\email{evgenios.kakariadis@newcastle.ac.uk}

\author[E.G. Katsoulis]{E. Katsoulis}
\address{Department of Mathematics\\ East Carolina University\\ Greenville\\ NC 27858\\USA}
\email{katsoulise@ecu.edu}

\author[M. Laca]{M. Laca}
\address{Department of Mathematics and Statistics\\ University of Victoria\\ Victoria\\ BC\\ Canada V8W 2Y2}
\email{laca@uvic.ca}

\author[X. Li]{X. Li}
\address{School of Mathematics and Statistics\\ University of Glasgow\\ University Place\\ Glasgow\\ G12 8QQ\\ UK}
\email{xin.li@glasgow.ac.uk}

\thanks{2010 {\it  Mathematics Subject Classification.} 46L08, 46L05}

\thanks{{\it Key words and phrases:} Product systems, Nica-Pimsner algebras, C*-envelope, covariance algebra, co-universal algebra, coaction.}

\begin{abstract}
A cosystem consists of a possibly nonselfadoint operator algebra equipped with a coaction by a discrete group.  We introduce the concept of  C*-envelope for a cosystem; roughly speaking, this is the smallest C*-algebraic cosystem  that contains an equivariant completely isometric copy of the original one.
We show that the C*-envelope for a cosystem always exists and we explain how it relates to the usual C*-envelope.
We then show that for compactly aligned product systems over group-embeddable right LCM semigroups,
the C*-envelope is co-universal, in the sense  of Carlsen, Larsen, Sims and Vittadello, 
for the Fock tensor algebra equipped with its natural coaction.
This yields 
 the existence of  a co-universal C*-algebra, generalizing previous results of Carlsen, Larsen, Sims and Vittadello, and of  Dor-On and Katsoulis. We also realize the C*-envelope of the tensor algebra as the reduced cross sectional algebra of a Fell bundle introduced by Sehnem, which, under a mild assumption of normality, we then identify with the quotient of the Fock algebra by the image of Sehnem's strong covariance ideal. In another application, we  obtain a reduced Hao-Ng isomorphism theorem for the co-universal algebras.
\end{abstract}

\maketitle




\section{Introduction}

In a remarkable series of papers  spanning four decades, Arveson developed a non-commutative analogue of boundary theory for nonselfadjoint operator algebras \cite{Arv69, Arv72, Arv98, Arv08}, which  constitutes one of the most fundamental and fruitful areas of interaction between C*-algebras and nonselfadjoint operator algebras. One specific noncommutative boundary is the noncommutative analogue of the Shilov boundary, called the C*-envelope, which can be thought of as the smallest C*-algebra containing the given operator algebra in a reasonable sense. Computing the C*-envelope in various cases has been of interest and use to many authors over the years \cite{DFK17, DK11, DOM16, DS18, Hum20, KR16,  MS98}. C*-envelopes have also had recent applications  in classification of nonselfadjoint operator algebras \cite{DEG20}, finite dimensional approximation \cite{CR19}, crossed products \cite{KR16},
group theory \cite{BKKO17,KK17}, noncommutative geometry \cite{CvS+}, and noncommutative convexity \cite{EH19}.

In the seminal work of Pimsner \cite{Pim95}, many operator algebra constructions were generalized and unified by associating them to a single C*-correspondence. 
This allowed Pimsner to generalize the work of Cuntz and Krieger \cite{CK80}, as well as many others. Pimsner's work was refined by Katsura~\cite{Kat04}, who removed all conditions on the C*-correspondence  to obtain an in-depth study of what we now call Cuntz-Pimsner algebras.  A natural context for further generalization, unification  and insight was introduced by Fowler in his work on discrete product systems of C*-correspondences over quasi-lattice ordered semigroups \cite{Fow02}. 
Fowler's Toeplitz-Nica-Pimsner algebras generalize Nica's Wiener-Hopf algebras from \cite{Nic92}, as well as Pimsner's Toeplitz algebras. Although Fowler provided a Cuntz-type algebra for regular product systems when the semigroup is directed, for many years it was unclear what the right notion of a Cuntz algebra of a product system should be. 
Eventually, Sims and Yeend \cite{SY10} were able to give a definition of a Cuntz-Nica-Pimsner algebra $\N\O(X)$ for many new product systems. In further work, Carlsen, Larsen, Sims and Vitadello \cite{CLSV11} introduced  a co-universal Cuntz-Nica-Pimsner type of algebra, which they denoted as $\N\O^r(X)$, 
that satisfies an appropriate uniqueness theorem for equivariant homomorphisms. Their co-universal algebra  was shown to exist under additional hypothesis on the  product system; it generalizes  reduced  crossed products by  quasi-lattice ordered groups,
Crisp-Laca boundary quotients \cite{CL07},  and higher-rank graph C*-algebras \cite{KuP}.  

The tensor algebra $\mathcal{T}_{\la}(X)^+$ is the canonical nonselfadjoint subalgebra of the reduced Toeplitz C*-algebra, or Fock C*-algebra, $\mathcal{T}_{\la}(X)$ generated by the left-creation operators of the C*-correspondences that comprise the product system. It models many of the nonselfadjoint operator algebras that were previously investigated in the multivariable setting \cite{DK11,KK12}.
As with any nonselfadjoint 
algebra, a fundamental problem regarding $\mathcal{T}_{\la}(X)^+$ is the identification of its C*-envelope $\cenv(\mathcal{T}_{\la}(X)^+)$. In the case of a single $\ca$-correspondence this was done by Katsoulis and Kribs \cite{KatK}, following earlier work of Muhly and Solel \cite{MS98}, who pioneered the study of tensor algebras. In \cite{DFK17}, Davidson, Fuller and Kakariadis identified the C*-envelope for tensor algebras of product systems associated with $\bZ^n$-dynamical systems. In that paper dilation theoretic techniques merged with  uniqueness theorems for the images of  equivariant homomorphisms and gave strong motivating evidence that one can use C*-envelope techniques in order to prove the existence of a co-universal object for more general product systems over abelian orders.
This approach was fully materialized by Dor-On and Katsoulis \cite{DK20} who proved that for a compactly aligned product system $X$ over any abelian lattice ordered semigroup, $\cenv(\mathcal{T}_{\la}(X)^+)$ has the  co-universal property proposed in \cite{CLSV11},  thus showing in particular that  the co-universal algebra $\N\O^r(X)$ of \cite{CLSV11} exists without the injectivity assumption when the semigroup is abelian lattice ordered. This result strengthened the important connection between nonselfadjoint and selfadjoint operator algebra theory and raised the tantalising possibility  of proving directly the existence of an appropriate notion of C*-envelope that satisfies the desired co-universal property automatically  beyond abelian orders. Even though some of the techniques of \cite{DK20} are indeed applicable to more general settings,  it soon became clear
 that significant progress would require new ideas. The purpose of the present paper is to realize this possibility through the use of an equivariant version of the C*-envelope.

The turning point in our investigation is the realization that if $X$ is a product system over 
a subsemigroup $P$ of a group $G$, then the tensor algebra $\mathcal{T}_{\la}(X)^+$ comes equipped with a  natural (normal) coaction of $G$, forming what we call a cosystem. A cosystem $(\A, G, \de)$ consists of an operator algebra $\A$ and a coaction $\de \colon \A \to \A \otimes \ca(G)$ by a discrete group $G$. In Section~\ref{S;cosystem} we develop a boundary theory for cosystems that parallels the corresponding theory for operator algebras. In particular, given a cosystem $(\A, G, \de)$, we define notions of  $\ca$-cover and C*-envelope for $(\A, G, \de)$. Both  notions are equivariant analogues of the classical definitions.  In Theorem~\ref{T:co-env}, which is the main result of Section~\ref{S;cosystem}, we show that the C*-envelope $\cenv(\A, G, \de)$ of a cosystem $(\A, G, \de)$ always exists. Furthermore we give a picture of $\cenv(\A, G, \de)$ that connects it with the C*-envelope of $\A$. Specifically, $\cenv(\A, G, \de)$ is the $\ca$-subalgebra of $\cenv(\A)\otimes \ca(G)$ generated by $\de(\A)$, equipped with the coaction $\id\otimes \Delta$, where $\Delta$ is the comultiplication on $G$.

Having developed a satisfactory theory of C*-envelopes for cosystems, we  then move to applications. In Section~\ref{S;main} we investigate various C*-algebras associated with a product system over a right LCM subsemigroup of a group.  Right LCM semigroups that embed in a group   include quasi-lattice orders   as well as several other important  classes of semigroups. In Theorem~\ref{T:co-univ} we show that for every compactly aligned product system $X$ over a right LCM subsemigroup $P$ of a group $G$, the C*-envelope of the cosystem $(\T_\la(X)^+, G, \dep) $  has the co-universal property with respect to injective, gauge equivariant representations of $X$.  This resolves the problem of existence of a co-universal C*-algebra,
which is one of the central problems raised by  Carlsen, Larsen, Sims and Vitadello in \cite{CLSV11}. 
Specifically, our Theorem~\ref{T:co-univ} removes all the injectivity assumptions on $X$  from \cite[Theorem 4.1]{CLSV11} and at the same time generalizes the results for abelian semigroups of \cite{DK20} to the realm of right LCM semigroups that embed in a group. We remark that all the necessary facts from the theory of product systems over these semigroups are developed here from scratch, so, in particular, the proof of Theorem~\ref{T:co-univ} is essentially self-contained as it requires only some additional basic facts regarding the cross sectional C*-algebra of a Fell bundle \cite{Exe97}. 

In \cite{Seh18} Sehnem introduced a covariance C*-algebra $A\times_X P$ associated to a product system $X$ over a general subsemigroup $P$ of a group $G$ with coefficients in a C*-algebra $A$. There is a natural coaction of $G$ on $A\times_X P$ giving a grading 
$\S\C X := \{ [A \times_X P]_g \}_{g \in G}$.
Sehnem's covariance algebra satisfies an important property: any representation of $A\times_X P$ that is injective on $A$ is automatically injective on the fiber $[A \times_X P]_e$ over the identity. In Theorem~\ref{T:co-un is Fell} we show that if the product system $X$ is compactly aligned over a right LCM subsemigroup of a group, then our C*-envelope  is naturally isomorphic to the reduced cross-sectional algebra of $\S\C X$, while Sehnem's algebra $A \times_X P$ is isomorphic to the full cross-sectional algebra of  $\S\C X$.  We also consider the quotient  of $\T_\la(X) $ by the image  of Sehnem's ideal $\I_\infty$,  and in Corollary~\ref{C:exa Seh} we show that under a mild assumption (which is satisfied by all right LCM subsemigroups of exact groups), our C*-envelope is canonically isomorphic to this quotient: $ \cenv(\T_\la(X)^+, G, \dep)  \simeq \redcov$. When combined with our main result, Theorem~\ref{T:co-univ}, these results give a very detailed picture of $\cenv(\T_\la(X)^+, G, \dep) $.

In the final section of the paper we give an application of our theory to Hao-Ng type isomorphisms, much in the spirit of earlier works \cite{DK20, Kat17, KR16}.

\section{Preliminaries}
If $\X$ and $\Y$ are subspaces of some $\B(H)$ then we write $\ol{\X \Y}:= \ol{\spn}\{x y \mid x \in \X, y \in \Y\}$.
We denote the spatial tensor product by $\otimes$. 
All the semigroups considered in this paper are assumed to embed in a group and to contain the identity.

\subsection{Operator algebras}

We begin by  establishing terminology and recalling some fundamental facts in the theory of operator algebras.
For additional details and proofs, the monographs \cite{BL04, Pau02} provide an excellent introduction to the subject.

By an \emph{operator algebra} $\A$ we mean a norm-closed subalgebra of some $\B(H)$ for a Hilbert space $H$.
By a \emph{representation} of $\A$ we mean a completely contractive homomorphism $\phi \colon \A \to \B(H)$.
When $\phi \colon \A \to \B(H)$ is a representation, we will always assume it is non-degenerate in the sense that $\phi(\A)H$ is dense in $H$.

Meyer \cite{Mey01} has established the passage from the unital to the non-unital theory, which we now explain.
Suppose that $\A \subseteq \B(H)$ and $I_H \notin \A$.
Meyer shows that if $\phi \colon \A \to \B(K)$ is a (completely isometric) representation then the extension $\phi^1 \colon \A^1 \to \B(K)$ given by 
\[
\phi^1(a + \la I_H) = \phi(a) + \la I_K \qfor \A^1 = \spn\{\A, I_H\}
\]
is also a (completely isometric resp.) representation.
Hence $\A^1$ is independent of the completely isometric representation of $\A$ and thus constitutes \emph{the unique} ``one-point" unitization of $\A$.

A \emph{dilation} of a representation $\phi \colon \A \to \B(H)$ is a representation $\phi' \colon \A \to \B(H')$ such that $H \subseteq H'$ and $\phi(a) = P_H \phi'(a) |_H$ for all $a \in \A$.
A representation $\phi \colon \A \to \B(H)$ is called \emph{maximal} if every dilation $\phi' \colon \A \to \B(H')$ of $\phi$ is trivial, in the sense that $H$ is reducing for $\phi(\A)$.
The existence of maximal dilations in the unital case was first established by Dritschel and McCullough \cite{DM05}, and later simplified by Arveson \cite{Arv08}.
Dor-On and Salomon \cite{DS18} have shown that a representation $\phi$ is maximal if and only if its unitization $\phi^1$ is so. Hence, maximal dilations exist for possibly non-unital operator algebras, as arising from maximal dilations of their unitizations.

Now consider $\A$ inside the C*-algebra $\ca(\A)$ it generates.
By passing to the unitization and applying Arveson's Extension Theorem we see that every representation $\phi \colon \A \to \B(H)$ admits an extension $\wt{\phi} \colon \ca(\A) \to \B(H)$ to a completely contractive and completely positive map (ccp).
A representation $\phi \colon \A \to \B(H)$ is said to have the \emph{unique extension property (UEP)} if every ccp  extension to $\ca(\A)$ is a $*$-representation, and thus $\phi$ has a unique extension to a $*$-representation of $\ca(\A)$.
Arveson \cite{Arv08} shows that a representation is maximal if and only if it has the UEP in the unital case.
Dor-On and Salomon \cite{DS18} have extended this to the non-unital case as well.

The existence of maximal dilations leads naturally to the concept of the C*-envelope for an operator algebra, which we now discuss.
We say that $(C, \iota)$ is a \emph{C*-cover} of $\A$ if $\iota \colon \A \to C$ is a completely isometric representation with $C = \ca(\iota(\A))$.
The \emph{C*-envelope} $\cenv(\A)$ of $\A$ is a C*-cover $(\cenv(\A), \iota)$ with the following universal property:
if $(C', \iota')$ is a C*-cover of $\A$ then there exists a (necessarily unique) $*$-epimorphism $\phi \colon C' \to \cenv(\A)$ such that the following diagram
\begin{equation} \label{eq;env}
\xymatrix{
& & C' \ar@{.>}[d]^{\phi} \\
\A \ar[rru]^{\iota'} \ar[rr]^{\iota} & & \cenv(\A)
}
\end{equation}
commutes.
Arveson predicted the existence of the C*-envelope, which he computed for a variety of operator algebras \cite{Arv69}, but the existence problem was open for a decade until
 Hamana  solved it for unital algebras by proving the existence of injective envelopes \cite{Ham79}.
It follows from \cite[Subsection 2.2]{DS18} that the C*-envelope of an operator algebra $\A$ is the C*-algebra generated by a maximal completely isometric representation, even when $\A$ is non-unital.

\subsection{C*-correspondences}

A \emph{$\ca$-correspondence} $X$ over $A$ is a right Hilbert module over a C*-algebra $A$ with a left action of $A$
given by a $*$-homomorphism $\varphi_X$ of $ A $ into  the adjointable operators on $X$. 
We write $\L X$ for the adjointable operators on $X$ and $\K X$ for the norm closure of the generalized finite rank operators on $X$.
For two C*-corresponden\-ces $X, Y$ over the same $A$ we write $X \otimes_A Y$ for the balanced tensor product over $A$.
We say that $X$ is unitarily equivalent to $Y$ (symb. $X \simeq Y$) if there is a surjective adjointable operator $U \in \L(X,Y)$ such that $\sca{U \xi, U \eta} = \sca{\xi, \eta}$ and $U (a \xi b) = a U(\xi) b$ for all $\xi, \eta \in X$ and $a,b \in A$.

A \emph{Toeplitz representation} $(\pi,t)$ of a C*-correspondence $X$ over $A$ is a pair $(\pi,t)$ such that $\pi : A \rightarrow B(\H)$ is a $*$-representation and $t$ is a left module map implemented by $\pi$ which satisfies $\pi (\langle \xi , \eta \rangle) = t(\xi)^* t(\eta)$. Then $t$ is automatically a bimodule map via $\pi$.  When $(\pi,t)$ is a Toeplitz representation of the C*-correspondence $X$, there exists an induced $*$-representation of $\K X$ denoted by $t$ as well and  determined by $t(\theta_{\xi, \eta}) = t(\xi) t(\eta)^*$ for all rank-one operators $\theta_{\xi, \eta} \in \K X$.

\subsection{Product systems and their representations}

Let $P$ be a unital discrete subsemigroup of a discrete group $G$.
We will write $P^* = P\cap P^{-1}$ for the set of invertible elements in $P$.
We write $\ca_\la(P)$ for the C*-algebra generated by the left regular representation of $P$, i.e., the shift operators $V_p$ on $\ell^2(P)$ given by
\[
V_p e_r = e_{pr} \foral r \in P,
\]
where $\{e_r\}_{r\in P}$ is the standard orthonormal basis for $\ell^2(P)$.
\begin{definition} 
A \emph{product system $X$ over $P$ with coefficients in a C*-algebra $A$} is a family $\{X_p \mid p \in P\}$ of C*-correspondences over $A$ together with multiplication maps 
$M_{p,q} : X_p \otimes_A X_q \to X_{pq}$ such that 
\begin{enumerate}
\item $X_e$ is the standard bimodule ${}_A A_A$, and $M_{e,e} : A \otimes_A A \xrightarrow{\cong} A $ is simply multiplication on $A$;
\item if $p =e$, then $M_{e, q}: A \otimes_A X_q \xrightarrow{\cong}  \ol{A \cdot X_p}$ is the left action of $A$ on $X_q$;
\item if $q = e$ then $M_{p, e}:  X_p \otimes_A A \xrightarrow{\cong}   X_p$ is the right action of $A$ on $X_p$;

\item if $p, q \in P \setminus \{e\}$, then
$M_{p,q} : X_p \otimes_A X_q \xrightarrow{\cong} X_{pq}$;

\smallskip\item the multiplication maps are associative in the sense that
\[
M_{pq, r} (M_{p,q} \otimes \id_{X_r}) = M_{p, qr} (\id_{X_p} \otimes M_{q,r}) \foral p,q,r \in P.
\]
\end{enumerate}
Throughout this work we will also assume that  the left action of $A$ on $X_q$ is  nondegenerate (or essential) in the sense that $\ol{A \cdot X_q} =X_q$ for every $q\in P$,  and hence the multiplication map $M_{e,q}$ in \textup{(ii)} is an isomorphism of  $X_e\otimes_A X_q$ onto $X_q$.
\end{definition}
 \begin{remark}
We assume that the left action of $A$ is nondegenerate in order to be able to freely use the results from \cite{Seh18}.   Observe that, as pointed out in \cite[Remark~1.3]{KL19b}, this assumption is automatically satisfied when $P$ has a nontrivial unit $u$ because then
\[
 X_q =  X_u \otimes_A X_{u^{-1}q}  = X_u \otimes_A X_{u^{-1}} \otimes_A X_q= X_e\otimes X_q.
\]
It is plausible that one could extend the main results from \cite{Seh18} to product systems with degenerate left actions, and that this
would allow us to include such product systems in our results.
\end{remark}
 Henceforth we will be suppressing the use of the symbols $M_{p,q}$,
thus writing $\xi_p \xi_q$ for the image of $\xi_p \otimes \xi_q$ under $M_{p,q}$, and so
\[
\varphi_{pq}(a)(\xi_p \xi_q) = (\varphi_p(a) \xi_p) \xi_q \foral a \in A \text{ and } \xi_p \in X_p, \xi_q \in X_q.
\]
The product system structure gives rise to maps
\[
i_{p}^{pq} \colon \L X_{p} \to \L X_{pq}
\; \textup{ such that } \;
i_{p}^{pq}(S) (\xi_{p} \xi_{q})
=
(S \xi_{p}) \xi_{q}.
\]
If $x \in P^*$ then $i_{r}^{rx} \colon \L X_r \to \L X_{rx}$ is a $*$-isomorphism with inverse $i_{rx}^{rxx^{-1}} \colon \L X_{rx} \to \L X_{r}$.

\begin{definition}
Let $P$ be a subsemigroup of a group $G$ and let $X$ be a product system over $P$. A \emph{Toeplitz representation} $t = \{t_p\}_{p\in P}$ of the product system $\{X_p \mid {p\in P}\}$ is a family of maps $t_p : X_p \rightarrow \B(H)$ such that $(t_e,t_p)$ is a Toeplitz representation of $X_p$ and
\[
t_p(\xi_p) t_q(\xi_q) = t_{pq}(\xi_p \xi_q) \foral \xi_p \in X_p, \xi_q \in X_q.
\]
The representation $t$ is said to be \emph{injective} if the homomorphism $t_e: X_e \to \B(H)$ is injective, in which case $t_p$ is isometric for each $p\in P$.
The \emph{Toeplitz algebra $\T(X)$ of $X$} is the universal C*-algebra generated by $X$ with respect to the Toeplitz representations of $X$.
The \emph{Toeplitz tensor algebra $\T(X)^+$ of $X$} is the norm-closed nonselfadjoint subalgebra of $\T(X)$ generated by $X$ and $A$.  Note that  in the present situation $X_e = A$, so $ \T(X)^+$ is generated by $X$ alone.
\end{definition}

A Toeplitz representation $t = \{t_p\}_{p\in P}$ induces a representation $t_{r,s}$ of $\K(X_s, X_r)$ on the same Hilbert space, determined by 
$t_{r,s}(\theta_{\xi_r, \xi_s}) = t_r(\xi_r) t_s(\xi_s)^*$.  In the case $s = r$ we slightly abuse the notation, as already indicated above for a single correspondence, and write $t_s$ in place of $t_{s,s}$.
This gives a representation triple $(t_r, t_{r,s}, t_s)$ of the bimodule $(\K X_r, \K(X_s, X_r), \K X_s)$.

\begin{proposition}\label{P:star inv LCM}
Let $P$ be a subsemigroup of a group $G$ and let $X$ product system over $P$.
Let $t = \{t_p\}_{p\in P}$ be a Toeplitz representation of $X$.
If $r \in P^*$ then 
\[
t_{r}(X_r)^* = t_{r^{-1}}(X_{r^{-1}}).
\]
If $w \in P$ and $r \in P^*$ then
\[
i_w^{wr}(k_w) \in \K X_{wr}
\text{ and }
t_{wr}(i_{w}^{wr}(k_w)) = t_w(k_w) \foral k_w \in \K X_w.
\]
\end{proposition}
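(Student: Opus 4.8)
The plan is to exploit the fact that invertibility of $r$ turns $X_r$ into an imprimitivity (Morita equivalence) bimodule over $A$: since $r\in P^*$ forces both $M_{r,r^{-1}}\colon X_r\otimes_A X_{r^{-1}}\xrightarrow{\cong}X_e=A$ and $M_{r^{-1},r}\colon X_{r^{-1}}\otimes_A X_r\xrightarrow{\cong}A$ to be unitary isomorphisms, $X_r$ is invertible with inverse $X_{r^{-1}}$. In particular $\varphi_r\colon A\to\L X_r$ is an injective $*$-isomorphism onto $\K X_r$, the module is full on both sides, and via $M_{r^{-1},r}$ the bimodule $X_{r^{-1}}$ is the dual of $X_r$. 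I shall use repeatedly the resulting imprimitivity identities: for each $\zeta\in X_r$ there is a unique $\mu_\zeta\in X_{r^{-1}}$ with $\mu_\zeta\,\omega=\langle\zeta,\omega\rangle$ for all $\omega\in X_r$, and dually $\zeta\,\mu_\omega={}_A\langle\zeta,\omega\rangle$.

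The central step—and the one that keeps the whole argument from being circular—is a \emph{nondegeneracy lemma}: for $r\in P^*$ one has $\ol{t_r(X_r)H}=H$. This is where invertibility does the essential work. Indeed, because $M_{r,r^{-1}}$ is onto, every $a\in A$ is a norm limit of sums $\sum_i\zeta_i\mu_i$ with $\zeta_i\in X_r,\ \mu_i\in X_{r^{-1}}$, so by the Toeplitz relation $\pi(a)=\sum_i t_r(\zeta_i)t_{r^{-1}}(\mu_i)$; hence $\pi(A)H\subseteq\ol{t_r(X_r)\,t_{r^{-1}}(X_{r^{-1}})H}\subseteq\ol{t_r(X_r)H}$, and nondegeneracy of $\pi$ gives $\ol{t_r(X_r)H}=H$. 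Granting the lemma, the first assertion is immediate: for $\zeta\in X_r$ set $D:=t_{r^{-1}}(\mu_\zeta)-t_r(\zeta)^*$; for every $\omega\in X_r$,
\[
t_{r^{-1}}(\mu_\zeta)t_r(\omega)=\pi(\mu_\zeta\,\omega)=\pi(\langle\zeta,\omega\rangle)=t_r(\zeta)^*t_r(\omega),
\]
so $D$ vanishes on $t_r(X_r)H$ and therefore on $\ol{t_r(X_r)H}=H$. Thus $t_r(\zeta)^*=t_{r^{-1}}(\mu_\zeta)\in t_{r^{-1}}(X_{r^{-1}})$, giving $t_r(X_r)^*\subseteq t_{r^{-1}}(X_{r^{-1}})$; exchanging $r$ and $r^{-1}$ yields equality.

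For the second assertion I first record the case $w=e$, which is exactly Cuntz--Pimsner covariance at $r$: since $i_e^{r}=\varphi_r$, I must show $t_r(\varphi_r(a))=\pi(a)$. Writing $a=\lim\sum_k{}_A\langle\zeta_k,\omega_k\rangle$ by left fullness gives $\varphi_r(a)=\lim\sum_k\theta_{\zeta_k,\omega_k}$ in norm, and then
\[
t_r(\varphi_r(a))=\lim\sum_k t_r(\zeta_k)t_r(\omega_k)^*=\lim\sum_k t_r(\zeta_k)t_{r^{-1}}(\mu_{\omega_k})=\lim\sum_k\pi(\zeta_k\,\mu_{\omega_k})=\lim\sum_k\pi({}_A\langle\zeta_k,\omega_k\rangle)=\pi(a),
\]
using the first assertion in the second equality and $\zeta\,\mu_\omega={}_A\langle\zeta,\omega\rangle$ in the fourth.

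For general $w$, the membership $i_w^{wr}(k_w)\in\K X_{wr}$ is the genuinely module-theoretic point and I expect it to be the main obstacle: under $X_{wr}\cong X_w\otimes_A X_r$ the map $i_w^{wr}$ is $k\mapsto k\otimes\id_{X_r}$, and since $X_r$ is an imprimitivity bimodule, tensoring by $X_r$ carries $\K X_w$ isomorphically onto $\K X_{wr}$; this is the standard fact that a Morita equivalence preserves the compacts, and it is what I must justify cleanly. Granting it, I reduce the identity to $k_w=\theta_{\xi_w,\eta_w}$ and pick finite sums with $\sum_j\theta_{\zeta_j,\zeta_j}=\varphi_r(u_\lambda)$ for an approximate unit $(u_\lambda)$ of $A$. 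A direct computation gives $\sum_j\theta_{\xi_w\zeta_j,\eta_w\zeta_j}=(\id_{X_w}\otimes\varphi_r(u_\lambda))\,i_w^{wr}(\theta_{\xi_w,\eta_w})$, which converges to $i_w^{wr}(\theta_{\xi_w,\eta_w})$ in norm because $\id_{X_w}\otimes\varphi_r(u_\lambda)\to\id$ strictly and $i_w^{wr}(\theta_{\xi_w,\eta_w})$ is now known to be compact. Applying the $*$-representation $t_{wr}$, using $t_{wr}(\xi_w\zeta_j)=t_w(\xi_w)t_r(\zeta_j)$ and the covariance $t_r(\varphi_r(u_\lambda))=\pi(u_\lambda)$ from the case $w=e$, the left-hand side equals $t_w(\xi_w)\pi(u_\lambda)t_w(\eta_w)^*=t_w(\xi_w)t_w(\eta_w u_\lambda)^*\to t_w(\xi_w)t_w(\eta_w)^*=t_w(\theta_{\xi_w,\eta_w})$. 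Comparing the two limits yields $t_{wr}(i_w^{wr}(k_w))=t_w(k_w)$ for rank-one $k_w$, and hence, by linearity and continuity, for all $k_w\in\K X_w$.
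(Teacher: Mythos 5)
Your proposal is correct in substance, but it takes a genuinely different route from the paper's. The paper never introduces the dual module or any imprimitivity-bimodule machinery. For the first identity it computes with products of operator spaces: from $X_e \simeq X_r \otimes X_{r^{-1}}$ and nondegeneracy of the left action it gets $t_e(X_e) = \ol{t_r(X_r)t_{r^{-1}}(X_{r^{-1}})}$ and $\ol{t_e(X_e)t_{r^{-1}}(X_{r^{-1}})} = t_{r^{-1}}(X_{r^{-1}})$, deduces $t_r(X_r)^* \subseteq t_{r^{-1}}(X_{r^{-1}})$, and symmetrizes in $r \leftrightarrow r^{-1}$. For the second identity it compares $t_{wr}(i_w^{wr}(k_w))$ and $t_w(k_w)$ directly: they agree on elementary tensors, hence on $\ol{t_{wr}(\K X_{wr})H}$, and the first part yields $t_{wr}(\K X_{wr}) = \ol{t_w(X_w)t_e(X_e)t_w(X_w)^*} = t_w(\K X_w)$, so the two operators have the same essential subspace and must coincide. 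Your argument instead isolates the pointwise formula $t_r(\zeta)^* = t_{r^{-1}}(\mu_\zeta)$ (strictly stronger than the paper's set equality) and the covariance identity $t_r(\varphi_r(a)) = \pi(a)$, then runs an approximate-unit computation. This buys more explicit structural information, at the cost of asserting the Morita facts (fullness, $\varphi_r(A) = \K X_r$, existence of the pairing $\zeta \mapsto \mu_\zeta$) as standard; these are indeed true and standard for invertible correspondences, though a self-contained account would need to verify them. In fairness, the paper's own justification of $i_w^{wr}(\K X_w) \subseteq \K X_{wr}$ (``a $*$-isomorphism, thus it preserves the compact operators'') is no more detailed than your ``granted'' step. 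Note also that your nondegeneracy lemma uses the paper's standing convention that representations are nondegenerate, whereas the paper's proof of the first identity does not need $\ol{\pi(A)H} = H$.

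Two repairs are needed, both local. First, $\id_{X_w} \otimes \varphi_r(u_\la)$ is not a well-defined operator on the balanced tensor product $X_w \otimes_A X_r$: only maps commuting with the left action $\varphi_r(A)$ can be tensored against $\id_{X_w}$, and $\varphi_r(u_\la)$ need not commute with $\varphi_r(a)$. Fortunately the identity you want survives without this factorization: a direct check on elementary tensors gives $\sum_j \theta_{\xi_w \zeta_j, \eta_w \zeta_j} = i_w^{wr}(\theta_{\xi_w, \eta_w u_\la})$, whence
\[
\Bigl\| \sum_j \theta_{\xi_w \zeta_j, \eta_w \zeta_j} - i_w^{wr}(\theta_{\xi_w,\eta_w}) \Bigr\| \leq \|\xi_w\| \, \|\eta_w u_\la - \eta_w\| \longrightarrow 0,
\]
using that $i_w^{wr}$ is isometric. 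Second, this repaired computation exhibits $i_w^{wr}(\theta_{\xi_w,\eta_w})$ as a norm limit of finite sums of rank-one operators, so it proves the membership $i_w^{wr}(k_w) \in \K X_{wr}$ outright --- closing exactly the step you identified as the main obstacle, and which your original convergence argument could not close on its own, since appealing to strict convergence against a fixed compact operator presupposes the compactness of $i_w^{wr}(\theta_{\xi_w,\eta_w})$ that you were granting. With this substitution your proof is complete and, apart from the routine imprimitivity verifications, self-contained.
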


\begin{proof}
Since $r \in P^*$ we have that $X_e \simeq X_r \otimes X_{r^{-1}}$.
Since $r^{-1} \in P^*$ we also have that $X_e \otimes X_{r^{-1}} \simeq X_{r^{-1}}$.
Hence we get the two equations 
\[
t_e(X_e) = \ol{t_r(X_r) t_{r^{-1}}(X_{r^{-1}})}
\qand 
\ol{t_e(X_e) t_{r^{-1}}(X_{r^{-1}})} = t_{r^{-1}}(X_{r^{-1}}).
\]
By multiplying the first equation on the left with $t_{r}(X)^*$, and by using the second equation, we get
\[
 t_{r}(X_{r})^* t_e(X_e)  \subseteq \ol{t_r(X_r)^* t_r(X_r) t_{r^{-1}}(X_{r^{-1}})} \subseteq \ol{t_e(X_e) t_{r^{-1}}(X_{r^{-1}})} = t_{r^{-1}}(X_{r^{-1}}).
\]
Replacing $r^{-1}$ by $r$ in the second equation and taking adjoints gives $t_{r}(X_{r})^* = \ol{t_{r}(X_{r})^* t_e(X_e)}$, hence
\[t_{r}(X_{r})^*   \subset t_{r^{-1}}(X_{r^{-1}}).\] 
As this holds for arbitrary $r \in P^*$ it also holds for $r^{-1} \in P^*$ and so 
\[
t_{r^{-1}}(X_{r^{-1}})^* \subseteq t_{r}(X_r).
\]
By applying adjoints we get the required reverse inclusion.

Since $r \in P^*$ we have that $\iota_w^{wr} \colon \L X_w \to \L X_{wr}$ is a $*$-isomorphism and thus it preserves the compact operators.
Therefore $\iota_w^{wr}(k_w) \in \K X_{wr}$ for $k_w \in \K X_w$.
By applying on elementary tensors we see that $t_{wr}(i_w^{wr}(k_w))$ coincides with $t_w(k_w)$ restricted on $\ol{t_{wr}(\K X_{wr}) H}$, where $H$ is the Hilbert space where $t = \{t_p\}_{p\in P}$ acts on.
On the other hand $t_w(k_w)$ is completely defined by its representation on $\ol{t_w(\K X_w) H}$.
However by the first part we have that
\[
t_{wr}(\K _{wr}) = \ol{t_w(X_w) t_r(X_r) t_r(X_r)^* t_w(X_w)^*} = \ol{t_w(X_w) t_e(X_e) t_w(X_w)^*} = t_w(\K X_w),
\]
and so $t_{wr}(i_{w}^{wr}(k_w)) = t_w(k_w)$.
\end{proof}

The Fock space representation $\overline{t}$ of Fowler \cite{Fow02} ensures that $X$, embeds isometrically in $\T(X)$. It is given as follows. Let $\F(X) = \oplus_{r \in P} X_r$ and for $\xi_p \in X_p$ define
\[
\ol{t}_p(\xi_p) \eta_r = \xi_p \eta_r
\qforal
\eta_r \in X_r.
\]
Then $\ol{t}:=\{\ol{t}_p\}$ defines a representation of $X_p$ for every $p \in P$, and induces a representation of $\T(X)$. By taking the compression of $\ol{t}_e$ at the $(e, e)$-entry we see that $\ol{t}_e$ is injective, and hence $\ol{t}_p$ is injective for each $p\in P$.

\begin{definition}
Let $P$ be a  subsemigroup of a group $G$ and let $X$ a product system over $P$.
The \emph{Fock algebra} $\T_\la(X)$ is the C*-algebra generated by the Fock representation.
The \emph{Fock tensor algebra} $\T_\la(X)^+$ is the subalgebra of $\T_\la(X)$ generated by $X$.
\end{definition}

\subsection{Product systems over right LCM semigroups} 
A semigroup $P$ is said to be a \emph{right LCM semigroup} if it is left cancellative and satisfies \emph{Clifford's condition} \cite{Law12, Nor14}: 
\begin{center}
for every $p, q \in P$ with $p P \cap q P \neq \mt$ there exists a $w \in P$ such that $p P \cap q P = w P$.
\end{center}
In other words, if $p, q \in P$ have a right common multiple then they have a right Least Common Multiple.
We always assume that the semigroup $P$ is contained in a group, and that it contains the identity element. It follows that $P$ is by default cancellative, and we will refer to $P$ simply as a \emph{right LCM subsemigroup of a group}.
It is clear that {if an element  $w\in P$} is a right Least Common Multiple for $p, q \in P$  then so is $w x$  for every unit 
$x \in P^*:= P\cap P^{-1}$.
Right LCM semigroups that embed in a group and have no nontrivial units, so that least common multiples are unique whenever they exist,
have been called  \emph{weak quasi-lattice ordered} semigroups in \cite{ABCD2019}.

\begin{example}
Right LCM subsemigroups of groups include as primary examples the quasi-lattice orders defined in \cite{Nic92}.  Several noteworthy examples beyond quasi-lattice orders  have been considered recently in the context of isometric representations.  We would like to list a few here in order to illustrate the variety. Since we do not require any specialized knowledge of these examples in this paper, we limit ourselves to giving references where details can be found.

The inclusion of an Artin monoid in its corresponding Artin group is always a right LCM subsemigroup of a group 
\cite{Brieskorn-Saito}.  Artin monoids have trivial unit groups so they  actually determine weak quasi-lattice orders in their respective groups. At this point, only the  particular cases of Artin monoids of spherical and rectangular type are actually known to be 
true quasi-lattice orders, \cite{Brieskorn-Saito} and \cite{Crisp-Laca2002}.

Another important class of right LCM subsemigroups of groups are the
 inclusions of  Baumslag-Solitar monoids $B(m,n)^+$ in their corresponding Baumslag-Solitar groups  
 $B(m,n):= \langle a,b \mid ab^m = b^n a\rangle$. These monoids always have trivial unit groups, and they give quasi-lattice orders in their groups if and only if either $mn>0$  \cite[Theorem 2.11]{Spi12}
 or else $mn < 0$  and $|m|=1$ \cite[Lemma 2.12]{Spi12}, see also 
 \cite[Example~3.5]{ABCD2019}. The remaining case is particularly interesting because 
when $mn<0$ and $|m| \neq 1$, no group embedding of $B(m,n)^+$ can be a quasi-lattice order. This is proved directly in  \cite[Proposition 3.10]{ABCD2019} under the extra assumption that $m$ does not divide $n$; a proof without this assumption can be derived from the failure of the Toeplitz condition in this case, as shown in  \cite[Section~4.2]{Li2020}.

There are also right LCM semigroups such as $\mathbb Z\rtimes \mathbb N^{\times}$, 
in which the whole additive part $\mathbb Z \times \{1\}$
consists of units. A sizable general class of examples like this arises from considering the semigroup $R\rtimes R^\times$ of affine 
transformations of an integral domain $R$ \cite{Li13}. The hypothesis of integral domain is necessary for 
these semigroups to embed in groups; which can be taken to be  groups of  affine 
transformations of the corresponding  fields of fractions. The semigroup $R\rtimes R^{\times}$ is a right LCM 
semigroup if and only if $R$ satisfies the GCD condition \cite[Proposition 2.23]{Nor14}. The best known  examples are the $ax+b$ semigroups  
of the rings of algebraic integers in number fields of class number $1$. In these, the groups of units are nonabelian and consist
of the semidirect products of the additive group of those rings by the multiplicative action of the units. 

Partly inspired by these, more right LCM semigroups that embed in a group have been constructed 
as semidirect products associated to certain algebraic actions of $\mathbb N^k$ on an abelian group
that respect the order in the sense of \cite[Definition 8.1]{BLS18}. 
\end{example}


It will be convenient for us to work with finite subsets $F$ of $P$ 
on which a `local' right LCM operation can be defined that reflects the structure of upper bounds in $P$, thus generalizing the  notion of 
$\vee$-closed sets used in the case of a quasi-lattice order.  
The problem is that  expressions like $p\vee q$ or $ \lcm(p,q)$  customarily used to denote smallest common upper bound or least common multiple of two elements, are not well defined for a general right LCM  semigroup 
because of the nonuniqueness caused by the presence of nontrivial units in $P$. So we need to impose restrictions on $F$ to ensure uniqueness.
\begin{definition}\label{def:veeclosed}
Let $P$ be a right LCM subsemigroup of a group $G$. 
A finite subset $F$ of $P$ is said to be \emph{{$\vee$-closed}} if for every $p,q \in F$ with $p P \cap q P \neq \mt$ there exists a unique $w \in F$ such that $p P \cap q P = w P$, equivalently, $F$ contains exactly one right LCM for any two of its elements that have a right LCM in $P$.
\end{definition}
An easy example that illustrates this notion is provided by the multiplicative semigroup $P= \mathbb Z\setminus \{0\}$, in which least common multiples are defined only up to $\pm 1$. But, for instance, the set $F$ of all positive divisors of a positive integer is $\vee$-closed because any two  elements of $F$ have a unique l.c.m. in $F$.

 Another way to approach this  is to realize that a finite subset $F\subseteq P$ is 
 \emph{{$\vee$-closed}}   iff  the restriction of the `right ideal map' $\I:p\mapsto pP$
 to $F$ gives a bijection whose image $\I(F) := \{p P \mid p \in F\}$  is closed under intersection.
It is then easy to see that if $F$ is $\vee$-closed, then the familiar relation
\[
p \leq q \Leftrightarrow q^{-1}p \in  P
\]
actually defines a partial order on $F$, and hence, being finite,  each {$\vee$-closed} set has maximal and minimal elements.

Following Fowler's work \cite{Fow02}, Brownlowe, Larsen and Stammeier \cite{BLS18}, and Kwasniewski and Larsen \cite{KL19a, KL19b} considered product systems over right LCM semigroups.

\begin{definition}
A product system $X$ over a right LCM semigroup $P$ with coefficients from $A$ is called \emph{compactly aligned} if for $p, q \in P$ with $p P \cap q P = w P$ we have that
\[
i_{p}^{w}(k_p) i_{q}^{w}(k_q) \in \K X_{w} \text{ whenever } k_p \in \K X_{p}, k_q \in \K X_{q}.
\]
\end{definition}
A note is in order for clarifying that this is independent of the choice of $w$.
Recall that if $w'$ is a right LCM of $p, q$ then $w' = wx$ for some $x \in P^*$.
Since $\L X_{w} \simeq \L X_{wx}$ we have that $i_{p}^{w}(k_p) i_{q}^{w}(k_q) \in \K X_w$ if and only if $i_{p}^{wx}(k_p) i_{q}^{wx}(k_q) = i^{wx}_w (i_{p}^{w}(k_p) i_{q}^{w}(k_q)) \in \K X_{wx}$ for all $x \in P^*$.

\begin{definition}
Let $P$ be a right LCM subsemigroup of a group $G$ and let $X$ be a compactly aligned product system over $P $ with coefficients in $A$.
A \emph{Nica-covariant representation $t = \{t_p\}_{p\in P}$} is a Toeplitz representation of $X$ that in addition satisfies the \emph{Nica-covariance  condition}: for all $k_p \in \K X_{p}$ and $k_q \in \K X_{q}$, i.e., 
\[
t_{p}(k_p) t_{q}(k_q) = 
\begin{cases}
t_{w} (i_{p}^{w}(k_p) i_{q}^{w}(k_q)) & \text{ if } p P \cap q P = w P, \\
0 & \text{ otherwise}.
\end{cases}
\]
The \emph{Nica-Toeplitz algebra $\N\T(X)$ of $X$} is the universal C*-algebra generated by $X$ with respect to the Nica-covariant representations of $X$.
The \emph{Nica-Toeplitz tensor algebra $\N\T(X)^+$ of $X$} is the norm closed nonselfadoint subalgebra of $\N\T(X)$ generated by $X$.
\end{definition}

Notice that in the definition of Nica-covariance, the choice of the least common multiple is arbitrary. This is because Proposition \ref{P:star inv LCM} implies that 
 \[
t_{w} (i_{p}^{w}(k_p) i_{q}^{w}(k_q))
=
t_{wx} (i_{p}^{wx}(k_p) i_{q}^{wx}(k_q)), \,\,
k_p \in \K X_p, k_q \in \K X_q,
\]
provided that $pP \cap qP = wP$ and $x \in P^*$.

Under the assumption of compact alignment, one can check that the Fock representation is automatically Nica-covariant.
Thus $\N\T(X)$ is non-trivial. In the case where $P = \bZ_+$ we actually have that $\N\T(X) = \T(X)$. This is not necessarily true for other right LCM semigroups. Indeed in the case where $P = Z^n_+$, $n \geq 2$, 
Dor-On and Katsoulis provide a counterexample to this effect in \cite[Example 5.2]{DK20}.
The same example further shows that $\T(X)^+$  need not be completely isometric to $\N\T(X)^+$.

Our next goal is to understand the cores of Nica-covariant representations of $X$. So let $t=\{t_p\}_{p \in P}$ be a Nica-covariant representation of $X$.
We compute
\[
t_p(X_p)^* t_p(X_p) \cdot t_p(\xi_p)^* t_q(\xi_q) \cdot t_q(X_q)^* t_q(X_q)
\subseteq
\ol{t_p(X_p)^* t_p(\K X_p) t_q(\K X_q) t_q(X_q)}
\]
and then take a limit by an approximate identity in $\ol{t_p(X_p)^* t_p(X_p)}$ and in $\ol{t_q(X_q)^* t_q(X_q)}$, to derive that
\[
t_p(\xi_p)^* t_q(\xi_q) \in \ol{t_{p'}(X_{p'}) t_{q'}(X_{q'})^*} \qfor w P = pP \cap q P, p' = p^{-1} w, q' = q^{-1} w,
\]
and
\[
t_p(\xi_p)^* t_q(\xi_q) = 0 \qfor pP \cap qP = \mt.
\]
Hence the C*-algebra $\ca(t)$ generated by $\{t_p(X_p)\}_{p\in P}$ is given by
\[
\ca(t) = \ol{\spn}\{t_p(\xi_p) t_q(\xi_q)^* \mid \xi_p \in X_p, \xi_q \in X_q \textup{ and } p,q \in P\}.
\]
If $F\subseteq P$, then we write
\begin{equation}\label{eq:BFtdef}
B_{F, t} 
:= \ol{\spn} \{ t_{p}(k_p) \mid k_p \in \K X_p, p \in F \}
\end{equation}

By definition, the \emph{core} of the representation $t$ is  the set $B_{P, t}$, which is clearly given by 
 \[
 B_{P, t} = \ol{\bigcup \{ B_{F, t} \mid F \subseteq P \text{ finite}\}}.
 \] 
 Notice that when $\I(F)$ is closed under intersections, Nica-covariance implies that $B_{F,t}$ is a $\ca$-subalgebra of $\ca(t)$.  
 We wish to show next that if we restrict the above union to $\vee$-closed sets $F$ we still obtain $B_{P, t}$ and that 
  for $\vee$-closed sets $F$  the linear spans in \eqref{eq:BFtdef} are automatically closed. 
 We begin with this last claim.

\begin{proposition}
Let $P$ be a right LCM subsemigroup of a group $G$, let  $X$ be a compactly aligned product system over $P$ and let $t=\{t_p\}_{p \in P}$ be a Nica-covariant representation of $X$. 
If $F \subseteq P$ is a {$\vee$-closed} set, then 
\[
B_{F, t} = \spn\{t_p(k_{p}) \mid k_p \in \K X_p, p \in F\}.
\]
\end{proposition}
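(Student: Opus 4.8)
The plan is to prove the statement by induction on the cardinality of $F$, exploiting the partial order that $\vee$-closedness induces on $F$ together with Nica-covariance. For the base case $|F| = 1$, say $F = \{p\}$, the span is simply $t_p(\K X_p)$, which is the image of the C*-algebra $\K X_p$ under the $*$-homomorphism $t_p$ and is therefore automatically norm-closed; hence it equals $B_{\{p\},t}$.

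For the inductive step, recall that on a $\vee$-closed set the relation $p \leq q \Leftrightarrow q^{-1}p \in P$ is a partial order, so I may pick a maximal element $p_0 \in F$ and set $F_0 := F \setminus \{p_0\}$. First I would check that $F_0$ is again $\vee$-closed: if $q, r \in F_0$ admit a right LCM $w \in F$, then $wP \subseteq qP$ gives $w \leq q$, and were $w = p_0$ we would obtain $p_0 \leq q$, contradicting the maximality of $p_0$ unless $q = p_0 \notin F_0$; thus $w \in F_0$, with uniqueness inherited from $F$. By the inductive hypothesis, $B_{F_0, t} = \spn\{t_q(k_q) \mid k_q \in \K X_q,\ q \in F_0\}$, and in particular it is a C*-subalgebra.

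The heart of the argument is then to recognise $B_{F_0,t}$ as a closed two-sided ideal inside $B_{F,t}$, complemented by the C*-subalgebra $C := t_{p_0}(\K X_{p_0})$. Here Nica-covariance does the work: for $q \in F_0$ the product $t_{p_0}(\K X_{p_0})\, t_q(\K X_q)$ either vanishes (when $p_0 P \cap qP = \mt$) or is contained in $t_w(\K X_w)$, where $wP = p_0 P \cap qP$ and $w \in F$; exactly as above, the maximality of $p_0$ forces $w \neq p_0$, hence $w \in F_0$, so the product lands in $B_{F_0,t}$. Taking adjoints handles $t_q(\K X_q)\, t_{p_0}(\K X_{p_0})$. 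Consequently $B_{F_0,t} + C$ is a $*$-algebra in which $B_{F_0,t}$ is a two-sided ideal, and I would invoke the standard fact that the sum of a closed two-sided ideal and a C*-subalgebra of a C*-algebra is again closed (through the isomorphism $(B_{F_0,t}+C)/B_{F_0,t} \cong C/(B_{F_0,t}\cap C)$, a quotient of a C*-algebra). This gives $\spn\{t_p(k_p) \mid k_p \in \K X_p,\ p \in F\} = B_{F_0,t} + C = B_{F,t}$, completing the induction.

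The main obstacle, and the only place where the hypotheses are genuinely used, is ensuring that the right LCM $w$ of $p_0$ with any element of $F_0$ never equals $p_0$ itself, so that products of compacts fall back into the strictly smaller set $F_0$ rather than returning to $t_{p_0}(\K X_{p_0})$. This is precisely what the choice of a maximal $p_0$ relative to the order induced by $\vee$-closedness guarantees, and it is simultaneously what keeps $F_0$ inside the class of $\vee$-closed sets, allowing the induction to run.
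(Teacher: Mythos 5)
Your argument is correct, and it is a genuinely different proof from the paper's. The paper reduces to the universal Nica-covariant representation $\wh{t}$ of $\N\T(X)$, takes a net $f_i=\sum_{p\in F}\wh{t}_p(k_{p,i})$ converging to some $f$, and uses the Fock representation together with the compression $Q_{p_0}(\cdot)Q_{p_0}$ at an extremal element $p_0\in F$ to show that the coefficient net $(k_{p_0,i})$ itself converges in $\K X_{p_0}$; peeling off one extremal element at a time shows the algebraic span is closed for $\wh{t}$, and the case of a general $t$ then follows because the span becomes a C*-algebra whose image under the induced $*$-homomorphism $t_*$ is automatically closed. You instead argue intrinsically in $\ca(t)$, by induction on $|F|$, via the decomposition $\spn\{t_p(k_p)\mid p\in F\}=B_{F_0,t}+t_{p_0}(\K X_{p_0})$, where Nica-covariance plus maximality of $p_0$ pushes all mixed products into the strictly smaller $\vee$-closed set $F_0$, making $B_{F_0,t}$ a closed two-sided ideal against the C*-subalgebra $t_{p_0}(\K X_{p_0})$, so that the standard fact that a closed ideal plus a C*-subalgebra is closed finishes the step. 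Note that both proofs pivot on the same combinatorial choice: your maximal $p_0$ in the order $p\le q\Leftrightarrow q^{-1}p\in P$ is exactly an element of $F$ with no proper divisor in $F$, which is also the property that makes the paper's compression at $Q_{p_0}$ isolate the $p_0$-coefficient (the paper calls such an element ``minimal'', an artifact of the order convention; your usage is internally consistent and your verification that $w=p_0$ would force $p_0P\subseteq qP$, hence $p_0<q$, is the right one). Your route buys elementarity and representation-independence --- no Fock space, no universal algebra, no net-convergence argument --- whereas the paper's route yields extra information (all ``Fourier coefficients'' of a convergent net converge) and installs the compression technique that is reused later, e.g.\ in Lemma~\ref{P:1-1 Fock cexp} and Proposition~\ref{P:P coa B}. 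One small point of hygiene: to quote the ideal-plus-subalgebra fact verbatim you should situate yourself in an ambient C*-algebra; your generator computations plus continuity show that $B_{F_0,t}$ is a closed two-sided ideal of the C*-algebra $B_{F,t}$ (which is a C*-algebra since $F$ is $\vee$-closed), and then $B_{F_0,t}+t_{p_0}(\K X_{p_0})=\pi^{-1}\bigl(\pi(t_{p_0}(\K X_{p_0}))\bigr)$ is closed for the quotient map $\pi\colon B_{F,t}\to B_{F,t}/B_{F_0,t}$; this is immediate from what you wrote and does not affect the validity of the proof.
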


\begin{proof}
It suffices to prove the above in the case where $t = \wh{t}$ the universal representation $\wh{t}$ of $\N\T(X)$.
Let $\ol{t}$ be the Fock representation of $X$ and $\Phi \colon \N\T(X) \to \ca(\ol{t})$ be the canonical $*$-epimorphism.
Let $f = \lim_i f_i$ for a net $(f_i)$ with
\[
f_i = \sum_{p \in F} \wh{t}_p(k_{p,i}) \in  \spn\{\wh{t}_p(k_{p}) \mid k_p \in \K X_p, p \in F\}.    
\]
Then we also have that
\[
\Phi(f) = \lim_i \sum_{p \in F} \ol{t}_p(k_{p,i}).
\]
Recall that $F$ is $\vee$-closed, so that $P$ induces a partial order on $F$, and let $p_0 \in F$ be a minimal element of $F$ in this partial order.  Take the compression to the $(p_0, p_0)$-entry by the projection $Q_{p_0} \colon \F(X) \to X_{p_0}$.
Then we have that
\[
\lim_i k_{p_0,i} = \lim_i Q_{p_0} f_i Q_{p_0} = Q_{p_0} \Phi(f) Q_{p_0}.
\]
Therefore the net $(k_{p_0,i} )$ is convergent in $\K X_{p_0}$, say to some $k_{p_0}$, and so $\lim_i \wh{t}_{p_0}(k_{p_0,i} ) = \wh{t}_{p_0}(k_{p_0})$.
We repeat for $f - \wh{t}_{p_0}(k_{p_0})$ and the net $(f_i - \wh{t}_{p_0}(k_{p_0,i} ))$, and for the {$\vee$-closed} set $F' = F \setminus \{p_0\}$.
Proceeding inductively, we see that for every $p \in F$ there exists a $k_p$ such that $\lim_i k_{p, i}= k_p$ and this shows that $f \in 
\spn\{\wh{t}_p(k_{p}) \mid k_p \in \K X_p, p \in F\}$, which completes the proof.
\end{proof}

 Next we see that $\vee$-closed sets suffice to  generate the core.

\begin{proposition}\label{P:purified}
Let $P$ be a right LCM subsemigroup of a group $G$. Let $X$ be a compactly aligned product system over $P$ and let $t=\{t_p\}_{p \in P}$ be a Nica-covariant representation of $X$. 
Then
\[
B_{P, t} 
=
\ol{\bigcup \{ B_{F, t} \mid F \subseteq P \text{ finite and {$\vee$-closed}} \}}.
\]
\end{proposition}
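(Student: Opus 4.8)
The plan is to dispose of the trivial inclusion first and then reduce the real work to a single combinatorial lemma about $\vee$-closures. The inclusion $\supseteq$ needs no argument, since every finite $\vee$-closed set is finite and so the right-hand union is a subfamily of the one defining $B_{P,t}$. For the reverse inclusion it suffices to prove that every finite $F \subseteq P$ admits a finite $\vee$-closed set $\wt F$ with $B_{F,t} \subseteq B_{\wt F, t}$; granting this, each $B_{F,t}$ sits inside $\bigcup\{B_{F',t} \mid F' \text{ finite and } \vee\text{-closed}\}$, and taking closures over all finite $F$ gives $B_{P,t} \subseteq \ol{\bigcup\{B_{F',t} \mid F' \vee\text{-closed}\}}$.

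To construct $\wt F$ I would close the principal right ideals generated by $F$ under intersection. Set $\J$ to be the family of all nonempty sets $\bigcap_{p \in S} pP$ with $\mt \neq S \subseteq F$. Iterating Clifford's condition shows that a nonempty finite intersection of principal right ideals is again principal: forming the nested pairwise intersections $p_1P \cap p_2 P = w_2 P$, then $w_2 P \cap p_3 P = w_3 P$, and so on, each stage stays nonempty and principal, so the total intersection is some $wP$. Hence every member of $\J$ is principal, $\J$ is finite (at most $2^{|F|}$ members), and $\J$ is closed under nonempty intersection because $\big(\bigcap_{p\in S} pP\big) \cap \big(\bigcap_{p \in S'} pP\big) = \bigcap_{p\in S\cup S'} pP$. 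I would then choose one generator $w_J$ for each $J \in \J$ and put $\wt F := \{w_J \mid J \in \J\}$.

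With this choice the right ideal map $w_J \mapsto w_J P$ is a bijection from $\wt F$ onto $\J$, and $\J$ is closed under intersection, so the characterization of $\vee$-closedness recorded after Definition~\ref{def:veeclosed} shows $\wt F$ is $\vee$-closed. For the inclusion $B_{F,t} \subseteq B_{\wt F,t}$ I would argue one generator at a time: for $p \in F$ we have $pP \in \J$, so $pP = w_J P$ for the chosen $w_J \in \wt F$, whence $p = w_J x$ with $x \in P^*$. Proposition~\ref{P:star inv LCM} then gives that $i_{w_J}^{w_J x}$ is a $*$-isomorphism with $t_{w_J x}(i_{w_J}^{w_J x}(k)) = t_{w_J}(k)$, so $t_p(\K X_p) = t_{w_J}(\K X_{w_J}) \subseteq B_{\wt F, t}$. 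Since these sets generate $B_{F,t}$, the inclusion follows.

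The one genuine obstacle is the ambiguity in least common multiples caused by nontrivial units: it is exactly what prevents the naive strategy of enlarging $F$ by all its LCMs from producing a $\vee$-closed set (and from forcing $F \subseteq \wt F$). I expect the resolution to be the observation above, namely that Proposition~\ref{P:star inv LCM} makes $t_p(\K X_p)$ depend only on the right ideal $pP$; once this is in hand, the remaining steps are routine bookkeeping with Clifford's condition and the ideal-map description of $\vee$-closed sets.
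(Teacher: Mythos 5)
Your proposal is correct and follows essentially the same route as the paper's proof: you saturate $\I(F)$ under nonempty intersections, choose one generator per principal ideal to obtain a $\vee$-closed set $\wt F$ with $\I(\wt F) \supseteq \I(F)$, and reduce $B_{F,t} \subseteq B_{\wt F,t}$ to the observation that $t_p(\K X_p)$ depends only on the right ideal $pP$ --- including the key caveat that $F$ need not be contained in $\wt F$. The only cosmetic difference is that you obtain $t_p(\K X_p) = t_w(\K X_w)$ for $pP = wP$ from the second half of Proposition~\ref{P:star inv LCM} (the identity $t_{wx}(i_w^{wx}(k_w)) = t_w(k_w)$ for units $x$), whereas the paper uses its first half to compute $t_p(\K X_p) = \ol{t_w(X_w)t_x(X_x)t_x(X_x)^*t_w(X_w)^*} = t_w(\K X_w)$; both are immediate consequences of the same proposition.
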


\begin{proof}
Suppose that $F$ is an arbitrary finite subset of $P$.  We first saturate $\I(F):= \{pP: p\in F\}$ under intersections,
\[
\I(F)^\cap:= \big\{ \bigcap_{p\in F'}pP :  \emptyset \neq F' \subset F\big\},
\]
 and then choose a unique generator of each principal ideal in the resulting set $\I(F)^\cap$. This gives 
a \emph{$\vee$-closed} set $F^\vee$ with the property that $ \I(F) \subseteq \I(F)^\cap =\I(F^\vee)$. 
We note in passing that there is no uniqueness in this process because  the  choice of generators is arbitrary.
The result will follow once we show that
$$B_{F, t} \subseteq B_{F^{\vee}, t}.$$
This is not obvious because $F$ itself may not be contained in $F^\vee$, so we need to verify that we do not lose any part of $B_{F, t}$ 
 when we restrict to unique generators for the ideals in $\I(F)^\cap$. 
Towards this end, since $\I(F)^\cap =\I(F^\vee)$, it  is enough to show that given $p, q \in P$ with $p P = q P$, then $t_{q}(\K X_q) = t_p( \K X_p)$.
In that case there exists  a unit $r \in P^*$ such that $q = pr$.
By Proposition \ref{P:star inv LCM} we have that $t_r(X_r)^* = t_{r^{-1}}(X_{r^{-1}})$ and $\ol{t_r(X_r) t_{r^{-1}}(X_{r^{-1}})} = t_e(X_e)$.
Hence
\[
t_{q}(\K X_q) = \ol{t_p(X_p) t_r(X_r) t_{r}(X_r)^* t_p(X_p)^*} = t_p(\K X_p).
\]
This completes  the  proof.
\end{proof}

\section{Cosystems and their C*-envelopes} \label{S;cosystem}

In what follows $G$ will always denote a fixed discrete group.
We write $u_g$ for the generators of the universal group C*-algebras $\ca(G)$ and $\la_g$ for the generators of the left regular representation $\ca_\la(G)$.
We write $\la \colon \ca(G) \to \ca_\la(G)$ for the canonical $*$-epimorphism.
Recall that $\ca(G)$ admits a faithful $*$-homomorphism
\[
\De \colon \ca(G) \to \ca(G) \otimes \ca(G) 
\text{ such that }
\De(u_g) = u_g \otimes u_g
\]
given by the universal property of $\ca(G)$; the left inverse of $\Delta$ is  given by $\id \otimes \chi$, where $\chi: \ca(G) \to \bC$ is the representation arising from the trivial character of $G$, and we identify $ \ca(G) \otimes \bC $ with $\ca(G) $.

\begin{definition}\label{D:cis coa}
Let $\A$ be an operator algebra.
A \emph{coaction of $G$ on $\A$} is a completely isometric representation $\de \colon \A \to \A \otimes \ca(G)$ such that $\sum_{g \in G} \A_g$ is norm-dense in $\A$ for the  spectral subspaces
\[
\A_g := \{a \in \A \mid \de(a) = a \otimes u_g\}. 
\]
If, in addition, the map $(\id \otimes \la) \de$ is injective then the coaction $\de$ is called \emph{normal}. 

 A map $\de$ as in Definition \ref{D:cis coa} automatically satisfies the coaction identity
\begin{equation} \label{eq;coactionid}
(\de \otimes \id_{\ca(G)}) \de = (\id_{\A} \otimes \De) \de.
\end{equation}
Indeed, (\ref{eq;coactionid}) is readily seen to hold on $\A_g$, $g \in G$ and therefore on $\A$, since $\sum_{g \in G} \A_g$ is norm-dense in $\A$.

If $\A$ is an operator algebra and $\de \colon \A \to \A \otimes \ca(G)$ is a  coaction on $\A$, then we will refer to the triple $(\A, G, \de)$ as a  \emph{cosystem}. 
A map $\phi \colon \A \to \A'$ between two cosystems $(\A, G, \de)$ and $(\A', G, \de')$ is said to be \emph{$G$-equivariant}, or simply \emph{equivariant}, if $\de' \phi=(\phi\otimes \id)\de$.
\end{definition}

It follows from the definition that if $(\A, G, \de)$ is a cosystem then
\[
\A_g \cdot \A_h \subseteq \A_{g h} \foral g, h \in G,
\]
because $\de$ is a homomorphism.
Conversely, if there are subspaces $\{\A_g\}_{g \in G}$ such that $\sum_{g \in G} \A_g$ is norm-dense in $\A$ and a representation $\de \colon \A \to \A \otimes \ca(G)$ such that
\[
\de(a_g) = a_g \otimes u_g \foral a_g \in \A_g, g \in G,
\]
then $\de$ is a coaction of $G$ on $\A$.
Indeed $\de$ satisfies the coaction identity and it is completely isometric since $(\id_{\A} \otimes \chi) \de = \id_{\A}$.

\begin{remark}\label{R:nd cis}
Let $(\A, G, \de)$ be a cosystem and assume that $\de$ extends to a $*$-homomorphism $\de \colon \ca(\A) \to \ca(\A) \otimes \ca(G)$ that satisfies the coaction identity
\[
(\de \otimes \id) \de(c) = (\id \otimes \De) \de(c) \foral c \in \ca(\A).
\]
Then $\de$ is automatically non-degenerate on $\ca(\A)$ in the sense that
\[
\ol{\de(\ca(\A)) \left[\ca(\A) \otimes \ca(G)\right]} = \ca(\A) \otimes \ca(G).
\]
Indeed if $(e_i)$ is a contractive approximate identity  for $\ca(\A)$ then we can write
\[
a_{g_1} a_{g_2}^*a_{g_3}  \cdots a_{g_{n-1}} a_{g_n}^* \otimes u_h
=
\lim_i (a_{g_1} \otimes u_{g_1}) \cdots (a_{g_n} \otimes u_{g_n})^* \left(e_i \otimes u_{(g_1g_2^{-1} \cdots g_{n-1}^{-1} g_n)^{-1} h}\right) ,
\]
and likewise for all products of the form
\[
a_{g_2}^*a_{g_3} \cdots a_{g_{n-1}} a_{g_n}^*, \ 
a_{g_2}^* a_{g_3}\cdots a_{g_n}^* a_{g_{n+1}} \ 
\text{ and } \ 
a_{g_1} a_{g_2}^* \cdots a_{g_n}^* a_{g_{n+1}}
\]
 in $\ca(\A)$.
By definition of $\de$ these products generate $\ca(\A)$.
\end{remark}

\begin{remark} \label{r;Qui}
If the operator algebra $\A$ happens to be a $\ca$-algebra, Definition \ref{D:cis coa} coincides with the definition given by Quigg in \cite[Section 1]{Qui96}.
In that case $\de$ is a faithful $*$-homomorphism and we have that
\[
(\A_g)^* = \{a^* \in \A \mid \de(a^*) = a^* \otimes u_{g^{-1}} \} = \A_{g^{-1}}.
\]
 As in Remark \ref{R:nd cis} the coaction is then non-degenerate, i.e., it is a \emph{full} coaction. Furthermore, there exists a conditional expectation $E_{\delta}:  \A \rightarrow \A_e$ vanishing on $\A_g$, for all $g \in G\backslash \{  e \}$; see \cite[Proposition A.4]{Seh18} for a proof. Therefore if $\A_0\subseteq \A$ is a dense subset of $\A$, then $E_{\de}(\A_0)$ is a dense subset of $\A_e$.
\end{remark}

For our next result, we use Fell's absorption principle to give sufficient conditions for the existence a compatible  normal  coaction.
\begin{proposition}\label{P:Fell ind}
Let $\A$ be an operator algebra and let $G$ be a group.
Suppose there are subspaces $\{\A_g\}_{g \in G}$ such that $\sum_{g \in G} \A_g$ is norm-dense in $\A$, and there is a completely isometric homomorphism
\[
\de_\la \colon \A \longrightarrow \A \otimes \ca_\la(G)
\]
such that
\begin{equation}\label{E:reduced}
\de_\la(a) = a \otimes \la_g \foral a \in \A_g, g \in G.
\end{equation}
Then $\A$ admits a normal coaction $\de$ of $G$ satisfying  $\de_\la = (\id \otimes \la) \de$.
\end{proposition}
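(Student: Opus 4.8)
The plan is to define $\de$ on the dense subalgebra $\A_0 := \sum_{g \in G} \A_g$ by the only possible formula, $\de(a_g) = a_g \otimes u_g$ for $a_g \in \A_g$, and then to upgrade this to a completely isometric homomorphism on all of $\A$ by means of Fell's absorption principle, after which the coaction axioms follow by a routine density argument.

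First I would check that the formula is unambiguous, i.e.\ that the algebraic sum $\sum_g \A_g$ is direct. Representing $\ca_\la(G) \subseteq \B(\ell^2(G))$, the vector functionals $\omega_g := \langle\,\cdot\,\delta_e, \delta_g \rangle$ satisfy $\omega_g(\la_h) = 1$ if $g=h$ and $0$ otherwise, so the slice maps $\id_\A \otimes \omega_g$ recover the homogeneous components from $\de_\la$, namely $(\id_\A \otimes \omega_g)\de_\la\big(\sum_h a_h\big) = a_g$. Since $\de_\la$ is (completely) isometric, this forces each $a_g = 0$ whenever $\sum_h a_h = 0$; hence $\de$ is a well-defined linear map on $\A_0$, and it is multiplicative because $\A_g \A_h \subseteq \A_{gh}$ and $u_g u_h = u_{gh}$.

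The crux is to show that $\de$ is completely isometric. Fix a faithful representation $\ca(G) \subseteq \B(H_u)$ and let $W := \sum_{s \in G} e_{s,s} \otimes u_s \in \B(\ell^2(G) \otimes H_u)$ be the Fell absorption unitary, where $e_{s,s}$ is the projection onto $\delta_s$; a direct computation gives $W^*(\la_g \otimes u_g)W = \la_g \otimes 1$ for every $g \in G$. Since $\de_\la$ is completely isometric, so is $\de_\la \otimes \id_{\ca(G)}$ by functoriality of the spatial tensor product, whence $\|\de(a)\| = \|(\de_\la \otimes \id)\de(a)\|$. For $a = \sum_g a_g \in \A_0$ one has
\[
(\de_\la \otimes \id)\de(a) = \sum_g a_g \otimes \la_g \otimes u_g,
\]
and conjugating by $1_\A \otimes W$ replaces each pair of legs $\la_g \otimes u_g$ by $\la_g \otimes 1$, producing $\de_\la(a) \otimes 1$. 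As unitary conjugation preserves the norm,
\[
\|\de(a)\| = \Big\| \sum_g a_g \otimes \la_g \otimes u_g \Big\| = \|\de_\la(a) \otimes 1\| = \|\de_\la(a)\| = \|a\|.
\]
Because $W$ acts trivially on the matrix and on the $\A$ legs, the identical computation at every matrix level shows $\de$ is completely isometric on $\A_0$, so it extends to a completely isometric homomorphism $\de \colon \A \to \A \otimes \ca(G)$.

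It then remains to verify the coaction axioms, all on the dense subalgebra $\A_0$ and extended by continuity. On $\A_g$ we have $(\id \otimes \la)\de(a_g) = a_g \otimes \la_g = \de_\la(a_g)$, so $(\id \otimes \la)\de = \de_\la$ on $\A$; in particular $(\id \otimes \la)\de$ is injective, which is precisely normality. The coaction identity is immediate on homogeneous elements, since both $(\de \otimes \id)\de(a_g)$ and $(\id \otimes \De)\de(a_g)$ equal $a_g \otimes u_g \otimes u_g$, and extends by density. Finally the given $\A_g$ sit inside the spectral subspaces $\{a : \de(a) = a \otimes u_g\}$, so the latter have dense sum and $\de$ is a coaction. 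I expect the genuinely nontrivial step to be the complete isometry of $\de$: it is exactly here that replacing the reduced generators $\la_g$ by the universal generators $u_g$ could in principle increase the norm, and the role of Fell's absorption is to show that it does not, by absorbing each $u_g$ back into the $\la_g$ already present in the faithful copy $\de_\la(\A)$.
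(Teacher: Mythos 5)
Your proposal is correct in substance and rests on exactly the same key idea as the paper's proof, namely Fell's absorption principle, but it is packaged differently. The paper never works on a dense subspace: it takes the $*$-isomorphism $\phi \colon \ca_\la(G) \to \ca(\la_g \otimes u_g \mid g \in G)$, $\la_g \mapsto \la_g \otimes u_g$, furnished by Fell absorption, and simply defines $\de := (\de_\la^{-1} \otimes \id_{\ca(G)})(\id_\A \otimes \phi)\de_\la$, a composition of completely isometric homomorphisms (checking on homogeneous elements that $(\id_\A \otimes \phi)\de_\la(\A)$ lands in $\de_\la(\A) \otimes \ca(G)$, so that the composition makes sense and $\de(a_g) = a_g \otimes u_g$). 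This buys multiplicativity and complete isometry for free. You instead define $\de$ on $\A_0 = \sum_g \A_g$ and prove complete isometry by hand, conjugating $\sum_g a_g \otimes \la_g \otimes u_g$ by the explicit absorption unitary $1 \otimes W$; your identity $W^*(\la_g \otimes u_g)W = \la_g \otimes 1$ and the resulting norm computation are correct, as is the slice-map argument (using that $\omega_g(\la_h)$ is $1$ if $g = h$ and $0$ otherwise) showing that the sum $\sum_g \A_g$ is direct, so $\de$ is well defined. What your route buys is a concrete, representation-level verification; what it loses is that the homomorphism property must now be checked by hand, which the paper's formulation sidesteps entirely.

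That is where the one genuine imprecision sits: you justify multiplicativity of $\de$ on $\A_0$ by the inclusion $\A_g \A_h \subseteq \A_{gh}$, but this is not among the hypotheses of the proposition. The $\A_g$ are merely subspaces with dense sum compatible with $\de_\la$; they need not be maximal, so from $\de_\la(a_g a_h) = a_g a_h \otimes \la_{gh}$ you cannot conclude $a_g a_h \in \A_{gh}$, and a priori $\A_0$ need not even be a subalgebra. The repair is cheap and stays within your framework: replace each $\A_g$ by the closed spectral subspace $\wt{\A}_g := \{a \in \A \mid \de_\la(a) = a \otimes \la_g\}$, which satisfies $\wt{\A}_g \wt{\A}_h \subseteq \wt{\A}_{gh}$ because $\de_\la$ is a homomorphism, contains $\A_g$ (so the sum stays dense), and still admits your slice-map directness argument. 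Alternatively, observe that your identity $(\de_\la \otimes \id)\de(a) = (1 \otimes W)\bigl(\de_\la(a) \otimes 1\bigr)(1 \otimes W)^*$ extends by continuity from $\A_0$ to all of $\A$; the right-hand side is multiplicative in $a$, and since $\de_\la \otimes \id$ is injective, $\de$ is a homomorphism on $\A$. With either repair, the remainder of your argument (normality via $(\id \otimes \la)\de = \de_\la$, the coaction identity on homogeneous elements, and density of the spectral subspaces of $\de$, which contain the $\A_g$) goes through exactly as you wrote it.
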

\begin{proof} That $\de_\la$  satisfies the coaction identity follows easily from \eqref{E:reduced} and  the subspaces
\[
\{a\in \A \mid \de_\la(a) = a\otimes \la_g\}, \qquad g\in G
\]
have dense linear span because they contain the corresponding $\A_g$ for every $g\in G$.
 We need to show that there is a  completely isometric homomorphism
\[
\de \colon \A \longrightarrow \A \otimes \ca(G)
\]
such that $\de_\la = (\id \otimes \la) \de$.
Let 
\[
\phi \colon \ca_\la(G) \longrightarrow \ca_\la(G) \otimes I \longrightarrow \ca_\la(G) \otimes \ca(G) : \la_g \mapsto \la_g \otimes u_g
\]
be the $*$-isomorphism given by Fell's absorption principle.
We can then define 
\[
\de := (\de_\la^{-1} \otimes \id_{\ca(G)}) (\id_{\A} \otimes \phi) \de_\la,
\]
which is the desired completely isometric homomorphism.
Indeed, 
\[
\de(a_g) = (\de_\la^{-1} \otimes \id_{\ca(G)})((a_g \otimes \la_g) \otimes u_g) = a_g \otimes u_g
\]
for every $a_g \in \A_g$, and thus $\de$ satisfies the coaction identity and the spectral subspaces of $\de$ have dense linear span because they contain the subspaces $\A_g$.
Since $\de_\la = (\id_{\A} \otimes \la) \de$ is faithful we have that $\de$ is a normal coaction on $\A$, and the proof is complete.
\end{proof}

\begin{example} \label{E;normalco}
The reduced group C*-algebra $\ca_\la(G)$ admits a faithful $*$-homomophism
\[
\De_\la \colon \ca_\la(G) \longrightarrow \ca_\la(G) \otimes \ca_\la(G) 
\text{ such that }
\De_\la(\la_g) = \la_g \otimes \la_g.
\]
Indeed consider the unitary
\[
U \colon \ell^2(G) \otimes \ell^2(G) \longrightarrow \ell^2(G) \otimes \ell^2(G)
\text{ with }
U (e_r \otimes e_s) = e_r \otimes e_{r s},
\]
and verify that 
\[
( \la_g \otimes \la_g ) U = U (\la_g \otimes I)
\foral 
g \in G.
\]
Therefore $\ad_U$ implements a faithful $*$-homomorphism
\[
\ca_\la(G) \simeq \ca(\la_g \otimes I \mid g \in G) \stackrel{\ad_U}{\longrightarrow} \ca(\la_g \otimes \la_g \mid g \in G).
\]
Thus $\ca_\la(G)$ admits a normal coaction of $G$.
\end{example}

\begin{definition}\label{D:coaction}
Let $(\A, G, \de)$ be a cosystem.
A triple $(C', \iota', \de')$ is called a \emph{C*-cover} for $(\A, G, \de)$ if  $(C', G, \de')$ forms a cosystem and $(C', \iota')$ forms a C*-cover of $\A$ with $\iota : \A\rightarrow C'$ being equivariant.
\end{definition}

\begin{definition}
Let $(\A, G, \de)$ be a cosystem.
The \emph{C*-envelope of $(\A, G, \de)$} is a C*-cover for $(\A, G, \de)$, denoted  by $( \cenv(\A, G, \de), \iotenv, \delenv)$,
that satisfies the following property: 
for any other C*-cover $(C', \iota', \de')$ of  $(\A, G, \de)$ there exists an equivariant  $*$-epimorphism $\phi \colon C' \to \cenv(\A, G, \de)$ that makes the following diagram
\[
\xymatrix{
& & C' \ar@{.>}[d]^{\phi} \\
\A \ar[rru]^{\iota'} \ar[rr]^{\iotenv} & & \cenv(\A, G, \de)
}
\]
commutative. We will often omit the embedding $\iotenv$ and the coaction $\delenv$  and  refer to the triple simply  as $ \cenv(\A, G, \de)$.
\end{definition}

As in the case of the C*-envelope for an operator algebra, it is easily seen that if the C*-envelope for a cosystem exists, then it is unique up to a natural notion of  isomorphism for cosystems.
The following theorem verifies  that  every  cosystem 
has a C*-envelope and gives a concrete picture for it.

\begin{theorem}\label{T:co-env}
Let $(\A, G, \de)$ be a cosystem and let $\iota \colon \A \to \cenv(\A)$ be the natural inclusion map.
Then the triple
\[
\left( \ca(\iota(a_g) \otimes u_g \mid g \in G), \ (\iota \otimes \id_{\ca(G)} ) \de,\  \id \otimes \De \right)
\]
is (isomorphic to) the C*-envelope for the cosystem $(\A, G, \de)$.
\end{theorem}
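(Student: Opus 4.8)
The plan is to verify directly that the exhibited triple satisfies the defining universal property of the C*-envelope of a cosystem; since the C*-envelope of a cosystem is unique up to the natural isomorphism whenever it exists, this suffices. Write $\C := \ca(\iota(a_g) \otimes u_g \mid g \in G)$ for the candidate C*-algebra, $\iotenv := (\iota \otimes \id_{\ca(G)})\de$ for the candidate embedding, and $\delenv := (\id_{\cenv(\A)} \otimes \De)|_{\C}$ for the candidate coaction. The argument splits into two parts: first, that $(\C, \iotenv, \delenv)$ is a C*-cover of $(\A, G, \de)$; and second, that it enjoys the required universal property.

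For the first part, note that for $a_g \in \A_g$ one has $\iotenv(a_g) = \iota(a_g) \otimes u_g$, so $\iotenv(\A)$ generates $\C$ and each image lies in the correct spectral subspace. Complete isometry of $\iotenv$ follows by composing the complete isometry $\de$ with $\iota \otimes \id_{\ca(G)}$, which is completely isometric because tensoring a complete isometry with the identity is completely isometric for the spatial tensor product. That $\delenv$ is a coaction on $\C$ uses faithfulness of $\De$, so that $\id \otimes \De$ is an injective $*$-homomorphism on $\cenv(\A) \otimes \ca(G)$ and hence on $\C$; on the generators one computes $(\id \otimes \De)(\iota(a_g) \otimes u_g) = (\iota(a_g) \otimes u_g) \otimes u_g$, which shows simultaneously that $\delenv$ maps $\C$ into $\C \otimes \ca(G)$, that each generator lies in the $g$-spectral subspace so the span of spectral subspaces is dense, and that $\iotenv$ is equivariant. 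The coaction identity is then automatic by Definition \ref{D:cis coa}.

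For the universal property, let $(C', \iota', \de')$ be any C*-cover of the cosystem. Since $(C', \iota')$ is in particular a C*-cover of $\A$, the universal property of $\cenv(\A)$ yields a $*$-epimorphism $q \colon C' \to \cenv(\A)$ with $q \iota' = \iota$. I would then set $\Phi := (q \otimes \id_{\ca(G)}) \de' \colon C' \to \cenv(\A) \otimes \ca(G)$ and check everything on the generators $\iota'(a_g)$. Equivariance of $\iota'$ gives $\de'(\iota'(a_g)) = \iota'(a_g) \otimes u_g$, whence $\Phi(\iota'(a_g)) = \iota(a_g) \otimes u_g$. Thus the range of the $*$-homomorphism $\Phi$ is the C*-algebra generated by the $\iota(a_g) \otimes u_g$, namely $\C$, so $\Phi \colon C' \to \C$ is a $*$-epimorphism, and the same computation gives $\Phi \iota' = \iotenv$, so the required triangle commutes. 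Finally, both $(\id \otimes \De)\Phi$ and $(\Phi \otimes \id)\de'$ are $*$-homomorphisms agreeing on the generators $\iota'(a_g)$, hence on all of $C'$, which is the equivariance of $\Phi$.

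The routine parts are the two verifications on spectral elements $a_g$, which propagate by density of $\sum_g \A_g$ and continuity of $*$-homomorphisms. The points carrying the real content are the complete isometry of $\iotenv$, resting on the good behaviour of complete isometries under the spatial tensor product; the injectivity of $\id \otimes \De$, resting on faithfulness of $\De$ together with functoriality of the minimal tensor product; and the appeal to the universal property of the ordinary C*-envelope $\cenv(\A)$ to produce $q$. I expect the main obstacle to be confirming that $\Phi$ surjects onto exactly $\C$ (rather than a larger subalgebra of $\cenv(\A) \otimes \ca(G)$) and, dually, that $\iotenv$ is genuinely completely isometric; both hinge on controlling precisely how $\iota$ interacts with the tensor factor $\ca(G)$.
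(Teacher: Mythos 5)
Your proposal is correct and follows essentially the same route as the paper: both establish the C*-cover property via the completely isometric composition $(\iota \otimes \id_{\ca(G)})\de$ (using that minimal tensoring preserves complete isometries), and both obtain the universal property by invoking the universal property of $\cenv(\A)$ to produce $q \colon C' \to \cenv(\A)$ and then setting $\Phi = (q \otimes \id_{\ca(G)})\de'$. The only cosmetic difference is that you verify equivariance of $\Phi$ by checking that two $*$-homomorphisms agree on the generators $\iota'(a_g)$, whereas the paper phrases the same computation through the coaction identity for $\de'$ in a commuting diagram of $*$-epimorphisms.
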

\begin{proof}
Let $(\A, G, \de)$ be a cosystem and fix the embedding $\iota \colon \A \to \cenv(\A)$. 
By considering the  composition
\begin{equation} \label{eq;correction}
\xymatrix@C=2cm{
\A \ar[r]^{\de \phantom{ooo} } & \A \otimes \ca(G) \ar[r]^{\iota \otimes \id_{\ca(G)} \phantom{oooo} } \ar[r] & \cenv(\iota(\A)) \otimes \ca(G),
}
\end{equation}
and recalling that the minimal tensor product of completely isometric representations is completely isometric, 
we see that the C*-algebra $$\ca(\iota(a_g) \otimes u_g \mid g \in G)$$ is a C*-cover for $\A$.
We can then endow it with the coaction $\id \otimes \De$; ) this makes $(\iota \otimes \id_{\ca(G)}) \de$ into a $\de$--$(\id \otimes \De)$ equivariant  homomorphism and the triple
\[
(\ca(\iota(a_g) \otimes u_g \mid g \in G), (\iota \otimes \id_{\ca(G)}) \de, \id \otimes \De)
\]
becomes a C*-cover for  $(\A, G, \de)$.
Next let $(C', \iota', \de')$ be a C*-cover and let $\phi \colon C' \to \cenv(\A)$ be as in (\ref{eq;env}).
We see that the following diagram is commutative
\[
\xymatrix@R=1cm@C=1cm{
C' \ar[rr]^{\de' \phantom{o} } \ar@{.>}[drr] & & 
C' \otimes \ca(G) \ar[rr]^{\id \otimes \De} \ar[d]^{\phi \otimes \id} \ar@{.>}[drr] & & 
C' \otimes \ca(G) \otimes \ca(G) \ar[d]^{\phi \otimes \id \otimes \id} \\
& & 
\ca(\iota(\A)) \otimes \ca(G) \ar[rr]^{\id \otimes \De \phantom{ooo} } & & 
\ca(\iota(\A)) \otimes \ca(G) \otimes \ca(G)
}
\]
as it is a diagram of $*$-epimorphisms that agree on the copies of $\A$.
We then obtain the canonical equivariant $*$-epimorphism
\[
(\phi \otimes \id_{\ca(G)}) \de' \colon (C', \iota', \de') \longrightarrow (\ca(\iota(a_g) \otimes u_g \mid g \in G), (\iota \otimes \id_{\ca(G)}) \de, \id \otimes \De)
\]
by following the diagonal arrows and using the coaction identity on $\de'$.
Indeed a direct computation shows that
\[
(\id \otimes \De) \left( \left( \phi \otimes \id \right) \de' \right)
=
(\phi \otimes \id \otimes \id) (\id \otimes \De) \de'
=
\left( \left( \left(\phi \otimes \id \right) \de' \right) \otimes \id \right) \de',
\]
and the proof is complete.
\end{proof}

Under certain hypothesis, one can obtain a more concrete picture for the C*-envelope. 

\begin{corollary} \label{C:normal} 
Let $(\A, G ,\de)$ be a normal cosystem, and let $\de_{\la}: \A \rightarrow \A\otimes\ca_{\la}(G)$ be a completely isometric homomorphism  satisfying the assumptions of Proposition~\ref{P:Fell ind} with respect to the spectral subspaces of the coaction $\de$.   If $\ol{\Delta_r}:   \ca_{\la}(G)\rightarrow  \ca_{\la}(G)  \otimes  \ca(G)$ denotes the normal coaction  implied by Example~\ref{E;normalco}, then
\begin{equation} \label{eq;normalenv}
\Big(\cenv(\A, G, \de) , \iotenv, \delenv \Big) \simeq  \left( \ca(\iota(a_g) \otimes \la_g \mid g \in G),\  (\iota \otimes \id_{\ca_{\la}(G)} ) \de_{\la},\ \id \otimes \ol{\Delta_r}\right).
\end{equation}
In particular, the coaction $\delenv$ on $\cenv(\A, G, \de)$ is normal.
\begin{proof}
By Theorem~\ref{T:co-env} the C*-envelope of $(\A, G, \de)$ is given by 
\[ 
\left( \ca(\iota(a_g) \otimes u_g \mid g \in G),\  (\iota \otimes \id_{\ca(G)} ) \de, \ \id \otimes \De \right).
\]
Since the right side of (\ref{eq;normalenv}) is a $\ca$-cover for $(\A, G ,\de)$, the defining properties of the C*-envelope imply an equivariant $*$-homomorphism
\[
\phi:  \ca(\iota(a_g) \otimes \la_g \mid g \in G) \longrightarrow  \ca(\iota(a_g) \otimes u_g \mid g \in G).
\]
If $q: \ca(G)\rightarrow \ca_{\la}(G)$ is the natural quotient then $(\id\otimes q) |_{\ca(\iota(a_g) \otimes u_g \mid g \in G)}$ provides an inverse for $\phi$ and the  conclusion follows.  
\end{proof}
\end{corollary}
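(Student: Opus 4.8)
The plan is to compare the two presentations of the envelope through the universal property of Theorem~\ref{T:co-env} and then to invert the resulting map by hand using the canonical quotient $q \colon \ca(G) \to \ca_\la(G)$. First I would invoke Theorem~\ref{T:co-env} to record that the C*-envelope of $(\A, G, \de)$ is the triple $\big(\ca(\iota(a_g) \otimes u_g \mid g \in G),\ (\iota \otimes \id_{\ca(G)})\de,\ \id \otimes \De\big)$, so that it suffices to identify this with the right-hand side of \eqref{eq;normalenv}.

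Next I would verify that the right-hand side of \eqref{eq;normalenv} is itself a C*-cover for the cosystem $(\A, G, \de)$. The map $(\iota \otimes \id_{\ca_\la(G)})\de_\la$ is completely isometric, being the composition of $\de_\la$ (completely isometric by hypothesis) with $\iota \otimes \id_{\ca_\la(G)}$ (completely isometric as a minimal tensor product of completely isometric maps), and its range generates $\ca(\iota(a_g) \otimes \la_g \mid g \in G)$; on the spectral subspace $\A_g$ it sends $a_g$ to $\iota(a_g) \otimes \la_g$. Since $\ol{\Delta_r}(\la_g) = \la_g \otimes u_g$ by Example~\ref{E;normalco}, the generators $\iota(a_g) \otimes \la_g$ are homogeneous of degree $g$ for $\id \otimes \ol{\Delta_r}$; hence this coaction restricts to $\ca(\iota(a_g) \otimes \la_g \mid g \in G)$ and the embedding is equivariant.

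With both presentations recognized as C*-covers, the defining property of the C*-envelope yields an equivariant $*$-epimorphism $\phi \colon \ca(\iota(a_g) \otimes \la_g \mid g) \to \ca(\iota(a_g) \otimes u_g \mid g)$ determined on generators by $\iota(a_g) \otimes \la_g \mapsto \iota(a_g) \otimes u_g$. To promote $\phi$ to an isomorphism I would exhibit its inverse: the $*$-homomorphism $\id_{\cenv(\A)} \otimes q$ carries $\iota(a_g) \otimes u_g$ to $\iota(a_g) \otimes \la_g$, hence restricts to a $*$-homomorphism $\ca(\iota(a_g) \otimes u_g \mid g) \to \ca(\iota(a_g) \otimes \la_g \mid g)$ that inverts $\phi$ on generators and therefore everywhere. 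As $\phi$ is also equivariant and intertwines the embeddings, it is a cosystem isomorphism. Normality of $\delenv$ then follows at once, since under this identification $(\id \otimes \la)\delenv$ becomes $\id_{\cenv(\A)} \otimes \De_\la$, which is injective because $\De_\la$ is faithful.

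The step I expect to be most delicate is the passage from the surjection $\phi$ to a genuine isomorphism, and this is exactly where the normality hypothesis is essential. The universal property alone supplies only the epimorphism $\phi$; injectivity must be extracted separately. The point is that normality forces $\de_\la = (\id \otimes \la)\de$ to be completely isometric, so applying $\id \otimes q$ to the ``full'' envelope loses no information and produces a two-sided inverse for $\phi$. Without normality there is no reason for $\id \otimes q$ to restrict to a faithful map on $\ca(\iota(a_g) \otimes u_g \mid g)$, and the two C*-covers could fail to coincide.
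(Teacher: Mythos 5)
Your argument is correct and follows the paper's proof essentially step for step: identify the envelope concretely via Theorem~\ref{T:co-env}, observe that the reduced picture is a C*-cover for $(\A, G, \de)$ (which you verify in more detail than the paper, checking equivariance and homogeneity of the generators under $\id \otimes \ol{\Delta_r}$), obtain the equivariant epimorphism $\phi$ from the universal property, and invert it by restricting $\id \otimes q$ to the algebra generated by the elements $\iota(a_g) \otimes u_g$. One small caveat on your closing commentary: the complete isometricity of $\de_\la$ is a separate hypothesis of the corollary rather than a consequence of normality alone --- normality as defined only gives injectivity of $(\id \otimes \la)\de$, and for nonselfadjoint operator algebras an injective homomorphism need not be completely isometric --- though in the body of your proof you invoke it correctly as a hypothesis.
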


 By duality, a coaction of a discrete abelian group $G$  on an operator algebra $\A$ corresponds to a point-norm continuous action $\{\be_\ga\}_{\ga \in \wh{G}}$ of the dual group $\wh{G}$ on $\A$.
Since each $\be_\ga$ is a completely isometric automorphism, it extends to an automorphism $\tilde{\be}_\ga$ of the C*-envelope $\cenv(\A)$, 
yielding a point norm continuous action $\{\tilde{\be}_\ga \}_{\ga \in \wh{G}}$ of  $\wh{G}$ on $\cenv(\A)$. Again by duality, this corresponds to a coaction of $G$ on $\cenv(\A)$.
Hence, for abelian $G$, the C*-envelope for a cosystem coincides with the usual C*-envelope, equipped with the coaction given by  (the duals of)  the group of extended automorphisms $\{ \tilde\be_\ga\mid \ga\in \hat G\}$.
Equivalently  every coaction of a discrete abelian group on an operator algebra lifts to a coaction  on its C*-envelope.
It is not known if this is the case for more general classes of groups. 

\begin{corollary}\label{C:fpa 1-1}
Let $(\cenv(\A, G, \de), \iota, \de_{\env})$ be the C*-envelope for a cosystem $(\A, G, \de)$.
Suppose that $\psi \colon \cenv(\A, G, \de) \to B$ is a $*$-homomorphism that is completely isometric on $\A$.
Then $\psi$ is faithful on the fixed point algebra $\left[ \cenv(\A,G,\de) \right]_e$.
\end{corollary}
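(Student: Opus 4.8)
The plan is to run the argument inside the concrete model supplied by Theorem~\ref{T:co-env}. Write $C := \cenv(\A,G,\de) = \ca(\iota(a_g)\otimes u_g \mid g \in G) \subseteq \cenv(\A)\otimes\ca(G)$, where $\iota\colon\A\to\cenv(\A)$ is the embedding into the ordinary C*-envelope, the cosystem embedding is $\iotenv = (\iota\otimes\id)\de$, and the coaction is $\delenv = \id\otimes\De$. The key object is the $*$-homomorphism $\Theta := (\id\otimes\chi)|_C \colon C \to \cenv(\A)$, where $\chi$ is the character of $\ca(G)$ with $\chi(u_g)=1$. Since $\Theta(\iota(a_g)\otimes u_g) = \iota(a_g)$, the map $\Theta$ is a $*$-epimorphism onto $\cenv(\A)$ satisfying $\Theta\circ\iotenv = \iota$.

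First I would prove the structural heart of the matter: that $\Theta$ is injective on the fixed point algebra $[C]_e = \{c \in C \mid \delenv(c) = c\otimes u_e\}$. Recall from the preliminaries that $\chi$ is a left inverse of $\De$ on either leg, so that $(\chi\otimes\id)\De = \id_{\ca(G)}$. Given $c \in [C]_e$, I would apply the slice map $\id\otimes\chi\otimes\id$ (annihilating the middle leg) to the defining identity $(\id\otimes\De)(c) = c\otimes u_e$ in $\cenv(\A)\otimes\ca(G)\otimes\ca(G)$: the left side collapses to $c$ by the left-inverse property, while the right side becomes $\Theta(c)\otimes u_e$. Thus $c = \Theta(c)\otimes u_e$ for every $c\in[C]_e$, which exhibits $\Theta|_{[C]_e}$ as a $*$-isomorphism onto its range, with inverse $d\mapsto d\otimes u_e$. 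I expect this to be the crux of the proof, since it is the one place where the interaction between the fixed point algebra and the comultiplication $\De$ is genuinely used.

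It then remains to factor $\psi$ through $\Theta$ by invoking the universality of the ordinary C*-envelope. Because $\psi$ is completely isometric on $\A = \iotenv(\A)$ and $C = \ca(\iotenv(\A))$, the pair $(\psi(C), \psi\circ\iotenv)$ is a C*-cover of $\A$; the defining property of $\cenv(\A)$ then produces a $*$-epimorphism $\pi\colon\psi(C)\to\cenv(\A)$ with $\pi\circ\psi\circ\iotenv = \iota$. Since $\pi\circ\psi$ and $\Theta$ are $*$-homomorphisms on $C$ that agree on the generating set $\iotenv(\A)$, they coincide on all of $C$, i.e.\ $\Theta = \pi\circ\psi$.

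Faithfulness of $\psi$ on $[C]_e$ is now immediate: if $x\in[C]_e$ satisfies $\psi(x)=0$, then $\Theta(x) = \pi(\psi(x)) = 0$, whence $x=0$ by the injectivity established in the second step. I would emphasize that this scheme uses neither normality of $\de$ nor any coaction structure on the target $B$; the role of $\Theta$ is to serve as a single fixed, concrete left inverse on the fixed point algebra through which every $\A$-completely-isometric $*$-homomorphism out of $C$ must factor.
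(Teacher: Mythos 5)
Your proof is correct and takes essentially the same route as the paper: both factor $\psi$ through the ordinary C*-envelope via its universal property (producing a map sending $\iota(a)\otimes u_g \mapsto \iota(a)$) and then observe that this composite inverts the ampliation $d \mapsto d \otimes u_e$ on the fixed point algebra. Your slice-map computation with $\id\otimes\chi\otimes\id$, giving $c = \Theta(c)\otimes u_e$ for $c$ in the fixed point algebra, is simply a self-contained justification of the step the paper handles by asserting the spanning description $\left[\cenv(\A,G,\de)\right]_e = \ol{\spn}\left\{ \prod\iota(a_{g_i})\iota(a_{h_i})^*\otimes 1 \mid \prod g_i h_i^{-1}=e \right\}$.
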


\begin{proof}
Without loss of generality assume that $\psi$ is surjective. Let $\iota\colon \A\rightarrow\cenv(\A)$ be the natural inclusion. 
Since $\psi$ is surjective and completely isometric on $\A$, the pair $(B, \psi(\iota\otimes \id) \de)$ is a C*-cover for $\A$. 
By the defining property of $\cenv(\A)$, there exists a surjective $*$-homomorphism $\phi\colon B\rightarrow \cenv(\A)$ so that $\phi  \big(\psi(\iota\otimes \id)\de \big)= \iota$. 
Therefore
\begin{equation} \label{eq;ceident}
(\phi\psi)(\iota(a) \otimes u_g) =  \iota(a), \mbox{ for all } a \in \A_g, g \in G.
\end{equation}

Now for our purposes, it suffices to show that $\phi\circ \psi$ is injective on $\left[ \cenv(\A, G, \de) \right]_e$.  
Notice that
\[
\spn\left\{ \prod\iota(a_{g_i})\iota(a_{h_i})^*\otimes u_{g_{i}h_{i}^{-1}} \mid a_{g_i}\in \A_{g_i} ,a_{h_i} \in \A_{h_i} \right\}
\]
is a dense subset of  $\cenv(\A, G, \de)$ with
\[
\spn\left\{ \prod\iota(a_{g_i})\iota(a_{h_i})^*\otimes u_{g_{i}h_{i}^{-1}} \mid a_{g_i}\in \A_{g_i} ,a_{h_i} \in \A_{h_i} ,  \prod g_ih_i^{-1} = g \right\} \subseteq \left[ \cenv(\A, G, \de)\right]_g, \,\, g \in G.
\]
Therefore, (the last two sentences of) Remark~\ref{r;Qui} implies that
\[
\left[ \cenv(\A, G, \de)\right]_e=\overline{\spn}\left\{ \prod\iota(a_{g_i})\iota(a_{h_i})^*\otimes1\mid a_{g_i}\in \A_{g_i} ,a_{h_i} \in \A_{h_i},  \prod g_ih_i^{-1} = e \right\}.
\]
Now (\ref{eq;ceident}) implies that $\phi \psi$ is the inverse of the ampliation map on $\left[ \cenv(\A, G, \de)\right]_e$ and the conclusion follows.
\end{proof}

Let us close this section with a discussion on gradings of C*-algebras in the sense of \cite{Exe17}.
\begin{definition}
Let $B$ be a C*-algebra and $G$ a discrete group. A collection of closed linear subspaces $\{B_g\}_{g\in G}$ of $B$ is called a \emph{grading} of $B$ by $G$ if
\begin{enumerate}

\item $B_g B_h \subseteq B_{gh}$

\item $B_g^* = B_{g^{-1}}$

\item $\sum_{g\in G} B_g$ is dense in $B$.

\end{enumerate}
If in addition there is a conditional expectation $E : B \rightarrow B_e$ which vanishes on $B_g$ for $g\neq e$, we say that the pair $(\{B_g\}_{g \in G}, E)$ is a \emph{topological} grading of $B$.
\end{definition}

When $\delta$ is a coaction on a C*-algebra $B$, the spectral subspaces $B_g$ for $g\in G$ comprise a topological grading for $B$ with conditional expectation $E_e = (\id \otimes F_e) \circ \de$ where $F_e : \ca(G) \rightarrow B$ is the $e$-th Fourier coefficent. Completely contractive maps $E_g : B \rightarrow B_g$ can be similarly defined by setting $E_g:= (\id \otimes F_g) \circ \de$, where $F_g : C^*(G) \rightarrow \bC $ is the $g$-th Fourier coefficient.

A grading of a C*-algebra by a group constitutes a Fell bundle over the group, and every Fell bundle arises this way, but not uniquely. Indeed, there may be many non-isomorphic graded C*-algebras whose gradings are all equal to a pre-assigned Fell bundle $\B$. At one extreme sits the maximal C*-algebra $C^*(\B)$, which is universal  for representations of $\B$, while at the other extreme is the minimal (reduced) cross sectional algebra $C^*_\lambda(\B)$ which is defined via the left regular representation  of $\B$ on $\ell^2(\B)$.  We refer to \cite{Exe97, Exe17} for the precise definitions and details.

\begin{definition}
Let $\B = \{B_g\}_{g \in G}$ be a topological grading for a C*-algebra $B$ over a group $G$. We say that an ideal  $I \lhd B$ is \emph{induced} if $I = \sca{I \cap B_e}$.
\end{definition}

If $\de \colon B \to B \otimes \ca(G)$ is a coaction on a C*-algebra and $I \lhd B$ is an induced ideal then $\de$ induces a faithful coaction $B /I$, see for example \cite[Proposition A.1]{CLSV11}. Normal actions also descend through induced ideals when $G$ is exact, see for example \cite[Proposition A.5]{CLSV11}. 

\section{C*-envelopes of cosystems as co-universal C*-algebras } \label{S;main}

In this section we consider the cosystem consisting of the Fock tensor algebra $\T_\la(X)^+$ of a compactly aligned product system $X$ over  a right LCM subsemigroup $P$  of a group $G$, together with a natural  coaction. We prove that   the associated C*-envelope has the co-universal property of \cite[Theorem 4.1]{CLSV11} with respect to $X$.

Let $\wt{t} =\{\wt{t}_p\}_{p \in P}$ be the \textit{universal Toeplitz representation for $X$}.
By the universal property of $\T(X)$ there is a canonical $*$-homomorphism
\[
\wt{\de} \colon \T(X) \longrightarrow \T(X) \otimes \ca(G) : \wt{t}_p(\xi_p) \longmapsto \wt{t}_p(\xi_p) \otimes u_p.
\]
Sehnem \cite[Lemma 2.2]{Seh18} has shown that $\wt{\de}$ is a non-degenerate and faithful coaction of $\T(X)$, where each spectral subspace $\T(X)_g$ with $g \in G$ is the closed linear span of the products
\[
\wt{t}_{p_1}(\xi_{p_1}) \wt{t}_{p_2}(\xi_{p_2})^* \wt{t}_{p_3}(\xi_{p_3}) \cdots  \wt{t}_{p_n}(\xi_{p_n})^* \qfor p_1 p_2^{-1}p_3 \cdots p_n^{-1} = g \ \ \text{and} \ \ \xi_{p_i} \in X_{p_i}
\]

Let $\wh{t}=\{\wh{t}_p\}_{p \in P}$ be the \textit{universal Nica-Toeplitz representation of $X$}. As $\N\T(X)$ is a quotient of $\T(X)$ by an induced ideal, by \cite[Proposition A.1]{CLSV11} the non-degenerate and faithful coaction of $G$ on $\T(X)$ descends canonically to one on $\N\T(X)$. Therefore, the canonical $*$-homomorphism 
\[
\wh{\de} \colon \N\T(X) \longrightarrow \N\T(X) \otimes \ca(G) : \wh{t}_p(\xi_p) \longmapsto \wh{t}_p(\xi_p) \otimes u_p
\]
defines a coaction on $\N\T(X)$.

The following proposition shows that the Fock algebra, being a reduced type object, admits a normal coaction. We state and prove the result in complete generality for future reference.

\begin{proposition}\label{P:f coaction}
Let $P$ be a unital subsemigroup of a group $G$ and $X$ a product system over $P$ with coefficients in $A$.
Let $\ol{t}$ be the Fock representation. Then there is a normal coaction
\[
\overline{\de}\colon \T_\la(X) \longrightarrow \T_{\la}(X) \otimes \ca(G) : \ol{t}_p(\xi_p) \longmapsto \ol{t}_p(\xi_p) \otimes u_{p}.
\]
Moreover each spectral space $\T_\la(X)_g$ with $g \in G$ is generated by the products of the form
\[
\ol{t}_{p_1}(\xi_{p_1}) \ol{t}_{p_2}(\xi_{p_2})^*\ol{t}_{p_3}(\xi_{p_3})  \cdots \ol{t}_{p_n}(\xi_{p_n})^*, \quad p_1 p_2^{-1} p_3\cdots p_n^{-1} = g.
\]
\end{proposition}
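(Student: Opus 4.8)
The plan is to construct $\ol\de$ by invoking Proposition~\ref{P:Fell ind}, which reduces the problem to exhibiting a completely isometric homomorphism $\ol\de_\la \colon \T_\la(X) \to \T_\la(X) \otimes \ca_\la(G)$ sending each spectral element $\ol t_p(\xi_p)$ (more precisely, each homogeneous product above) to itself tensored with the appropriate $\la_g$. The key observation is that the Fock representation already lives on $\F(X) = \oplus_{r \in P} X_r$, and the grading by $G$ is implemented \emph{spatially} by the natural unitary action of $G$ on the index set $P \subseteq G$. So first I would define candidate subspaces $\T_\la(X)_g$ as the closed spans of products $\ol t_{p_1}(\xi_{p_1}) \ol t_{p_2}(\xi_{p_2})^* \cdots \ol t_{p_n}(\xi_{p_n})^*$ with $p_1 p_2^{-1} \cdots p_n^{-1} = g$, and check $\sum_g \T_\la(X)_g$ is dense (immediate, since these products generate $\T_\la(X)$ as noted in the discussion of cores).

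The central step is to build $\ol\de_\la$ via a Fell-absorption / covariance argument on the Fock space. I would consider the Hilbert space $\F(X) \otimes \ell^2(G)$ and the diagonal unitary $W$ determined by the grading of the Fock space, namely $W(\eta_r \otimes e_s) = \eta_r \otimes e_{rs}$ for $\eta_r \in X_r$, mirroring the unitary $U$ of Example~\ref{E;normalco}. A direct computation on elementary tensors shows that conjugation by $W$ sends $\ol t_p(\xi_p) \otimes I$ to $\ol t_p(\xi_p) \otimes \la_p$: since $\ol t_p(\xi_p)$ shifts the $X_r$-summand to the $X_{pr}$-summand, one checks $W(\ol t_p(\xi_p) \otimes I)W^* = \ol t_p(\xi_p) \otimes \la_p$. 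Because $\ad_W$ is a $*$-isomorphism onto its image, it is completely isometric, and restricting to the copy $\T_\la(X) \otimes I$ yields the desired $\ol\de_\la$. On a homogeneous product of degree $g = p_1 p_2^{-1} \cdots p_n^{-1}$ the element is carried to itself tensored with $\la_{p_1}\la_{p_2}^* \cdots = \la_g$, so $\ol\de_\la$ has exactly the prescribed action on spectral subspaces and condition \eqref{E:reduced} of Proposition~\ref{P:Fell ind} is verified.

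Once $\ol\de_\la$ is in hand, Proposition~\ref{P:Fell ind} produces a normal coaction $\ol\de$ with $\ol\de_\la = (\id \otimes \la)\ol\de$ and with $\ol\de(\ol t_p(\xi_p)) = \ol t_p(\xi_p) \otimes u_p$, which is the first assertion. For the moreover clause, I would argue the spectral subspace $\T_\la(X)_g$ is exactly the claimed span: the inclusion $\supseteq$ is clear since each listed product $d$ satisfies $\ol\de(d) = d \otimes u_g$ by multiplicativity of $\ol\de$; for $\subseteq$, one uses that $\sum_g \T_\la(X)_g$ is dense together with the Fourier coefficient map $E_g = (\id \otimes F_g)\circ \ol\de$ to project a general element onto its degree-$g$ part, which lands in the norm closure of those products.

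The main obstacle I anticipate is the verification that $\ad_W$ is genuinely completely isometric as a map \emph{out of} $\T_\la(X)$, i.e.\ that the intertwining identity $W(\ol t_p(\xi_p) \otimes I)W^* = \ol t_p(\xi_p) \otimes \la_p$ holds for \emph{all} $p$ and respects adjoints and products consistently on the whole algebra (not merely on generators). This is essentially a bookkeeping check that the spatial $G$-action on the index set $P$ is compatible with the semigroup multiplication $M_{p,q}$ and with taking adjoints $\ol t_p(\xi_p)^*$, which lower the degree; care is needed because $P$ is only a subsemigroup of $G$, so adjoints move summands via $r \mapsto p^{-1}r$ which may leave $P$, but the Fock creation operators vanish there, keeping everything consistent. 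Beyond this the argument is routine, and normality is automatic from Proposition~\ref{P:Fell ind} since $\ol\de_\la = (\id \otimes \la)\ol\de$ is by construction completely isometric, hence faithful.
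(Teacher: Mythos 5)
Your proposal is correct and follows essentially the same route as the paper: the paper uses the identical unitary $U(\xi_r \otimes e_g) = \xi_r \otimes e_{rg}$ on $\F X \otimes \ell^2(G)$, verifies the covariance relation $U(\ol{t}_p(\xi_p) \otimes I) = (\ol{t}_p(\xi_p) \otimes \la_p)U$ on generators, and then invokes Proposition~\ref{P:Fell ind} to obtain the normal coaction, just as you do. The only cosmetic difference is that the paper obtains density of the spectral subspaces by pushing forward the known spectral subspaces of $\T(X)$ through the canonical surjection $\ol{t}_* \colon \T(X) \to \T_\la(X)$, whereas you argue it directly from generation; also note that your anticipated ``main obstacle'' is not one, since $\ad_U$ is conjugation by a unitary and hence automatically a $*$-isomorphism on the generated C*-algebra once the intertwining identity is checked on generators.
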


\begin{proof}
Let the operator $U \colon \F X \otimes \ell^2(G) \to \F X \otimes \ell^2(G)$ be given by
\[
U (\xi_r \otimes e_g) = \xi_r \otimes e_{r g}
\foral
r \in P, g \in G.
\]
We have that $U$ is a unitary in $\L( \F X \otimes \ell^2(G) )$ for the Hilbert bimodule $\F X \otimes \ell^2(G)$ over $A \otimes \bC = A$, with
\[
U^*( \xi_r \otimes e_h) = \xi_r \otimes e_{r^{-1} g}.
\]
We can then directly verify that $U(\ol{t}(\xi_p) \otimes I) = (\ol{t}(\xi_p) \otimes \la_p) U$ for all $p \in P$.
Thus $\ad_{U}$ implements a faithful $*$-homomorphism $\de_{\la} : \T_{\la}(X) \rightarrow \T_{\la}(X) \otimes C^*_{\la}(G)$ given by
\[
\xymatrix{
\T_\la(X) \simeq \ca(\ol{t}_p(\xi_p) \otimes I \mid p \in P) \ar[rr]^{\phantom{oooooo} \ad_{U}}
& & 
\ca(\ol{t}_p(\xi_p) \otimes \la_p \mid p \in P).
}
\]

Let $\ol{t}_* : \T(X) \rightarrow \T_{\la}(X)$ be the canonical surjection induced by $\ol{t}$. Since the spectral subspaces $\T(X)_g$, $g \in G$, for the coaction $\wt{\de}$ are the closed linear span of products
\[
\wt{t}_{p_1}(\xi_{p_1}) \wt{t}_{p_2}(\xi_{p_2})^* \wt{t}_{p_3}(\xi_{p_3}) \cdots  \wt{t}_{p_n}(\xi_{p_n})^* \qfor p_1 p_2^{-1}p_3 \cdots p_n^{-1} = g.
\]
and $\sum_{g\in G} \T(X)_g$ is dense in $\T(X)$, we see that the same persists after the application of $\ol{t}_*$. More precisely, we let $\T_{\la}(X)_g$ for $g \in G$ be the subspaces given by the closed linear span of
\[
\ol{t}_{p_1}(\xi_{p_1}) \ol{t}_{p_2}(\xi_{p_2})^* \ol{t}_{p_3}(\xi_{p_3}) \cdots  \ol{t}_{p_n}(\xi_{p_n})^* \qfor p_1 p_2^{-1}p_3 \cdots p_n^{-1} = g.
\]
As $\ol{t}_*$ is surjective, we get that $\sum_{g\in G} \T_{\la}(X)_g$ is dense in $\T_{\la}(X)$, and that $\ol{t}_*(\T(X)_g) = \T_{\la}(X)_g$. 

Since by definition $\de_{\la}(a) = a \otimes \la_g$ for $a\in \T_{\la}(X)_g$, we get that $\de_{\la}$ satisfies the conditions of Proposition~\ref{P:Fell ind}. Hence, there is a normal coaction $\de$ of $G$ on $\T_{\la}(X)$ whose spectral subspaces are $\T_{\la}(X)_g$.
\end{proof}

The faithful conditional expectation $\ol{E}:= E_{\ol{\de}}  \colon \T_\la(X) \rightarrow  \T_\la(X)_e$ given by the normal coaction $\ol{\de}$ of Proposition~\ref{P:f coaction} satisfies
\[
\ol{E}(f) = \sum_{r \in P} Q_r f Q_r
\foral
f \in \T_\la(X),
\]
for the projections $Q_{r} \colon \F(X) \to X_r$. Indeed, the above sum of compressions to the $(r,r)$-entries in $\L(\F X)$ acts as the identity on $ \T_\la(X)_e$ and zeroes all other spectral subspaces of $\ol{\de}$. By Remark~\ref{r;Qui}, $\ol{E}$ demonstrates exactly the same behavior on the spectral subspaces of $\ol{\de}$ and since these form a grading of $\T_\la(X)$, the conclusion follows.

We need the following auxiliary proposition. Even though it can be deduced from \cite[Theorem 2.17]{KL19b}, we give instead a self-contained proof.
\begin{lemma}  \label{P:1-1 Fock cexp}
Let $P$ be a right LCM subsemigroup of a group $G$ and $X$ be a compactly aligned product system over $P$.
If $\Phi \colon \N\T(X) \to \T_\la(X)$ is the canonical $*$-epimorphism, then $\Phi$ is faithful on $\N\T(X)_e$.
\end{lemma}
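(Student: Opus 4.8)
The plan is to show that the canonical $*$-epimorphism $\Phi \colon \N\T(X) \to \T_\la(X)$ is faithful on the fixed point algebra $\N\T(X)_e$ by exploiting the interplay between the conditional expectations on both sides and the concrete structure of the core on the Fock side. First I would recall that the faithful conditional expectation $\ol{E}$ on $\T_\la(X)$, coming from its normal coaction, is given by the sum of compressions $\ol{E}(f) = \sum_{r \in P} Q_r f Q_r$ onto the diagonal entries, and that on $\N\T(X)$ the coaction $\wh{\de}$ furnishes a conditional expectation $\wh{E}$ onto $\N\T(X)_e$. Since $\Phi$ is equivariant (it intertwines $\wh{\de}$ and $\ol{\de}$ by construction, as it sends generators to generators with the matching group element), it commutes with the conditional expectations, i.e.\ $\ol{E} \circ \Phi = \Phi \circ \wh{E}$. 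This reduces the problem: faithfulness on $\N\T(X)_e$ would follow from faithfulness of $\ol{E}$ (which we have) together with a way of recovering elements of $\N\T(X)_e$ from their image.

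The key step is to analyze $\N\T(X)_e$ concretely. By the general description of spectral subspaces of the coaction on $\N\T(X)$ together with the Nica-covariance relations, an element of $\N\T(X)_e$ lies in the closure of the span of products $\wh{t}_{p_1}(\xi_{p_1}) \wh{t}_{p_2}(\xi_{p_2})^* \cdots \wh{t}_{p_n}(\xi_{p_n})^*$ with $p_1 p_2^{-1} \cdots p_n^{-1} = e$. Using the computations preceding Proposition~\ref{P:purified}, which show that such products collapse into the core $B_{P, \wh{t}}$, I would argue that $\N\T(X)_e = B_{P, \wh{t}}$, the closure of the union of the subalgebras $B_{F, \wh{t}}$ over finite $\vee$-closed sets $F$ (invoking Proposition~\ref{P:purified}). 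Thus it suffices to prove that $\Phi$ is isometric, or at least injective, on each $B_{F, \wh{t}}$ for $F$ a $\vee$-closed finite set, and then pass to the inductive-limit closure.

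The main obstacle — and the heart of the argument — is establishing injectivity of $\Phi$ on $B_{F, \wh{t}}$ for $\vee$-closed $F$. Here the plan is to use the partial order that $P$ induces on a $\vee$-closed set $F$, just as in the proof of the preceding Proposition. Writing a generic element of $B_{F, \wh{t}}$ as $\sum_{p \in F} \wh{t}_p(k_p)$ with $k_p \in \K X_p$, I would recover each coefficient $k_p$ from the image $\Phi\big(\sum_{p \in F} \wh{t}_p(k_p)\big) = \sum_{p \in F} \ol{t}_p(k_p)$ by compressing to appropriate diagonal entries $Q_r$ of the Fock space, starting from a minimal element $p_0 \in F$ and peeling off terms inductively. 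Concretely, compressing to the $(p_0, p_0)$-entry isolates $k_{p_0}$ (no larger element of $F$ can contribute a compact acting nontrivially at that minimal position, precisely because $F$ is $\vee$-closed and $p_0$ is minimal), so $k_{p_0}$ is determined by $\Phi$ of the element, and one proceeds to $F \setminus \{p_0\}$. This shows $\Phi$ is injective on the dense $*$-subalgebra $\spn\{\wh{t}_p(k_p)\}$, hence isometric on $B_{F,\wh{t}}$ by a standard C*-argument (an injective $*$-homomorphism of C*-algebras is isometric). The delicate point to verify carefully is that the compression map to $Q_{p_0}$ genuinely annihilates the contributions of all $p \neq p_0$ in $F$, which is exactly where minimality in the $\vee$-closed order and the vanishing part of Nica-covariance are used; once this separation of variables is secured, faithfulness on all of $\N\T(X)_e = B_{P, \wh{t}}$ follows by continuity and the density of $\bigcup_F B_{F, \wh{t}}$.
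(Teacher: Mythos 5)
Your proposal is correct and takes essentially the same route as the paper's proof: identify $\N\T(X)_e$ with the core $B_{P, \wh{t}}$, reduce via Proposition~\ref{P:purified} to a finite $\vee$-closed set $F$ where $B_{F, \wh{t}}$ is the plain (already closed) span, and recover the coefficient $k_{p_0}$ at a minimal $p_0 \in F$ by compressing the Fock-space image to the $(p_0,p_0)$-entry. Your opening paragraph on intertwining the conditional expectations is superfluous here (the paper only exploits that in Theorem~\ref{T:co-univ}), and the isolation at $Q_{p_0}$ is really a feature of the Fock representation (operators $\ol{t}_p(k_p)$ annihilate $X_{p_0}$ unless $p$ divides $p_0$) rather than of Nica-covariance, but neither point affects the validity of the argument.
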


\begin{proof}
Let $\N\T(X) = \ca(\wh{t})$ and $\T_\la(X) = \ca(\ol{t})$.
Since $\N\T(X)_e = B_{P, \wh{t}}$, Proposition~\ref{P:purified} implies that it suffices to prove the injectivity of $\Phi$ on every $B_{F, \wh{t}}$, where $F$ ranges over all finite and {$\vee$-closed}  subsets of $P$.
Towards this end suppose that $k_p \in \K X_{p}$ for $p\in F$ and
\[
0 \neq f := \sum_{p \in F} \wh{t}_p(k_p) \in \ker \Phi.
\]
Let $p_0$ be minimal in $F$ such that $\wh{t}_{p_0}(k_{p_0}) \neq 0$; then $k_{p_0} \neq 0$.  Recall that $\ol{t}_p(k_{p})  \xi_q \neq  0$ for $\xi_q\in X_q$ implies $p\leq q$,
so by minimality of $p_0$ in $F$ we have that  $Q_{p_0}\ol{t}_p(k_{p})  Q_{p_0} =0$ for $p\in F\setminus \{p_0\}$. Hence
\[
k_{p_0} = Q_{p_0} \left(\sum_{p \in F} \ol{t}_p(k_{p}) \right) Q_{p_0} = Q_{p_0} \Phi(f) Q_{p_0} = 0,
\]
which is a contradiction.
\end{proof}


\begin{proposition}\label{T:Fock is Fell}
Let $P$ be a right LCM subsemigroup of a group $G$ and let  $X$ be a compactly aligned product system over $P$.
Consider the Fell bundle
\[
\N X := \{ [\N\T(X)]_g \}_{g \in G}
\]
induced by the canonical coaction $\wh{\de}$ of $G$ on $\N\T(X)$.
Then
\[
\ca(\N X) \simeq \N\T(X)
\qand
\ca_\la(\N X) \simeq \T_\la(X),
\]
i.e., $\N\T(X)$ is the full C*-algebra of the bundle $\N X$ and $\T_\la(X)$ is the reduced C*-algebra of the bundle $\N X$.
\end{proposition}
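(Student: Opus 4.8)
The plan is to prove the two isomorphisms separately, in each case combining the relevant universal property with Exel's theory of cross-sectional algebras of Fell bundles \cite{Exe97, Exe17}. The guiding principle is that the full algebra is pinned down by the universal property of $\N\T(X)$, while the reduced algebra is pinned down by the faithfulness of the conditional expectation on $\T_\la(X)$.

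For the full algebra I would construct mutually inverse $*$-homomorphisms. On one side, the fibers $[\N\T(X)]_g$ are by definition subspaces of $\N\T(X)$, so the tautological inclusion $\N X \hookrightarrow \N\T(X)$ is a representation of the Fell bundle, and its integrated form gives the canonical surjection $\Lambda\colon \ca(\N X) \to \N\T(X)$ from the universal property of the full cross-sectional algebra. On the other side, writing $j_p\colon [\N\T(X)]_p \to \ca(\N X)$ for the canonical inclusion of fibers, I would set $t_p := j_p \circ \wh{t}_p$; since $\wh{t}_p(\xi_p)\in[\N\T(X)]_p$ and $j$ is multiplicative and $*$-preserving on the bundle, the Toeplitz and Nica-covariance relations (all of which are expressed through the inherited multiplication and involution) hold automatically, so $\{t_p\}_{p\in P}$ is a Nica-covariant representation of $X$ in $\ca(\N X)$. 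The universal property of $\N\T(X)$ then yields $\Psi\colon \N\T(X) \to \ca(\N X)$, and I would check $\Lambda\Psi=\id$ on the generators $\wh{t}_p(\xi_p)$ and $\Psi\Lambda=\id$ on the dense span of fiber elements $j_g(b_g)$, giving $\ca(\N X)\simeq\N\T(X)$.

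For the reduced algebra I would show that $\T_\la(X)$ is topologically graded by (a copy of) $\N X$ with a faithful conditional expectation, so that Exel's characterization forces $\T_\la(X)\simeq\ca_\la(\N X)$. To identify the bundle, recall that $\Phi\colon \N\T(X)\to\T_\la(X)$ is equivariant, hence graded, and is faithful on the unit fiber by Lemma~\ref{P:1-1 Fock cexp}. For any $b\in[\N\T(X)]_g$ one has $b^*b\in[\N\T(X)]_e$, whence $\|\Phi(b)\|^2 = \|\Phi(b^*b)\| = \|b^*b\| = \|b\|^2$, so $\Phi$ is isometric on every fiber and $\{[\T_\la(X)]_g\}_{g}$ is isomorphic to $\N X$ as a Fell bundle. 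The expectation $\ol{E}$ arising from the normal coaction $\ol{\de}$ is faithful, being the sum of diagonal compressions $f\mapsto\sum_{r\in P}Q_r f Q_r$. Writing $q\colon \T_\la(X)\to\ca_\la(\N X)$ for the regular representation of the bundle, with $E_\la$ its canonical faithful expectation, graded maps intertwine expectations; hence for $b\in\ker q$ one gets $q(\ol{E}(b^*b)) = E_\la(q(b)^*q(b)) = 0$, and since $q$ is injective on the unit fiber this forces $\ol{E}(b^*b)=0$, so $b=0$ by faithfulness of $\ol{E}$. Thus $q$ is an isomorphism.

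I expect the reduced half to be the main obstacle. The full-algebra isomorphism is essentially bookkeeping with two universal properties, and the Nica-covariance of $\{t_p\}$ in $\ca(\N X)$ is automatic from the bundle structure. By contrast, the reduced statement hinges on correctly aligning the hypotheses of Exel's theorem: one must first upgrade faithfulness of $\Phi$ on the unit fiber (Lemma~\ref{P:1-1 Fock cexp}) to an isomorphism of Fell bundles, and then deploy the faithfulness of $\ol{E}$ as in the displayed computation to rule out a kernel for the regular representation. This faithfulness of the conditional expectation is precisely where the essential work lies.
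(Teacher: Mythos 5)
Your proposal is correct and follows essentially the same route as the paper: the full isomorphism comes from the universal property of $\N\T(X)$ applied to a Nica-covariant representation built from the grading isomorphisms (you specialize to the fiber inclusions $j_p$ into $\ca(\N X)$, the paper phrases it for an arbitrary graded algebra with grading isomorphic to $\N X$), and the reduced isomorphism comes from upgrading the unit-fiber faithfulness of Lemma~\ref{P:1-1 Fock cexp} to a Fell bundle isomorphism and then using the faithful Fock expectation. The only cosmetic difference is that you reprove inline the uniqueness argument that the paper simply cites from \cite[Theorem 3.3]{Exe97}.
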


\begin{proof}
For the first part, let $t' : X \rightarrow B(\H)$ be a Nica-covariant representation such that the  representation 
$t'_*: \N\T(X) \to \ca(t')$ is injective. Let $\A$ be a graded C*-algebra with grading isomorphism $\phi_g : [\N\T(X)]_g \rightarrow \A_g$ for each $g\in G$. Define a map $t : X \rightarrow \A$ by setting $t_p(\xi) = \phi_p(t'_p(\xi))$ for $\xi \in X_p$ and $p \in P$. 

We claim that  $t : X \rightarrow \A$ is a Nica-covariant representation.
Indeed, if $\xi, \zeta\in X_p$, $p \in P$, then 
\begin{align*}
t_p(\xi )^* t_p( \zeta) &= \phi_{p^{-1}}(t'_p(\xi)^*)\phi_p(t'_p(\zeta)) =\phi_e\big( t'_p(\xi)^*t'_p(\zeta)\big) \\
				&=\phi_e\big(t'_e(\langle  \xi \mid  \zeta\rangle ) \big) \\
				&=t_e(\langle  \xi \mid  \zeta\rangle)
				\end{align*}
and so $t : X \rightarrow \A$ is a Toeplitz representation. Similar arguments show that $t_p(k_p)=\phi_e(t'_{p}(k_p))$, for any rank-one operator $k_p \in \K X_p$, and thus by continuity for any compact operator operator $k_p \in \K X_p$. From this it is clear that the Nica-covariance of $t'$ implies that of $t$.

Having established the Nica-covariance of $t$, we now have an induced $*$-surjection $t_* : \N\T(X) \rightarrow \A$ such that $\phi:=t_*$ restricts to $\phi_g$ on $\N\T(X)_g$. This shows that $\ca(\N X) \simeq \N\T(X)$.

For the second part, there exists a canonical $*$-epimorphism $\Phi \colon \N\T(X) \to \T_\la(X)$ which by definition intertwines the coactions, and thus the conditional expectations.
By Lemma~\ref{P:1-1 Fock cexp} the map $\Phi$ is faithful on the fixed point algebra of $\N\T(X)$ and thus induces an isomorphism of the Fell bundle $\N X$.
Since the conditional expectation on $\T_\la(X)$ is faithful it follows by \cite[Proposition 3.7]{Exe97} that $\T_\la(X)$ is $*$-isomorphic to the reduced C*-algebra of the Fell bundle $\N X$.
\end{proof}

Our next result shows that up to a canonical $*$-isomorphism, the tensoring of any injective Nica-covariant representation of the product system $X$ with the left regular representation of $P$ produces the C*-algebra of the Fock representation of $X$.

\begin{proposition}\label{P:P coa B}
Let $P$ be a right LCM subsemigroup of a group $G$, let $X$ be a compactly aligned product system over $P$ and let $t = \{t_p\}_{p \in P}$ be an injective Nica-covariant representation of $X$.
Then there exists a faithful $*$-homomorphism
\[
\T_\la(X) \longrightarrow \ca (t) \otimes \ca_\la(P) : \ol{t}_p(\xi_p) \longmapsto t_p(\xi_p)\otimes V_p.
\]
\end{proposition}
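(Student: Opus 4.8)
First I would verify that the family $s=\{s_p\}_{p\in P}$ defined by $s_p(\xi_p)=t_p(\xi_p)\otimes V_p$ is a Nica-covariant representation of $X$. Since each $V_p$ is an isometry (left cancellation in $P$) with $V_pV_q=V_{pq}$, a direct computation gives $s_p(\xi_p)^*s_p(\eta_p)=t_e(\langle\xi_p,\eta_p\rangle)\otimes I=s_e(\langle\xi_p,\eta_p\rangle)$ and $s_p(\xi_p)s_q(\xi_q)=s_{pq}(\xi_p\xi_q)$, so $s$ is a Toeplitz representation. On compacts it satisfies $s_p(k_p)=t_p(k_p)\otimes V_pV_p^*$, and the range projections $\{V_pV_p^*\}$ of the left regular representation multiply by the rule $V_pV_p^*\,V_qV_q^*=V_wV_w^*$ when $pP\cap qP=wP$, and $=0$ when $pP\cap qP=\emptyset$, precisely because $V_pV_p^*$ is the projection of $\ell^2(P)$ onto $\ell^2(pP)$ and $P$ is right LCM. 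Combined with the Nica-covariance of $t$ this yields the Nica-covariance of $s$, hence an induced $*$-homomorphism $s_*\colon\N\T(X)\to\ca(t)\otimes\ca_\la(P)$ whose image $\ca(s)$ is the C*-algebra generated by $s(X)$.

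Next I would record that $\ca_\la(P)$ carries the normal coaction $\de_P\colon V_p\mapsto V_p\otimes u_p$ of $G$; this follows from Fell's absorption exactly as in Example~\ref{E;normalco} and Proposition~\ref{P:Fell ind}, using the unitary $W(e_r\otimes e_g)=e_r\otimes e_{rg}$ on $\ell^2(P)\otimes\ell^2(G)$. Then $\be:=\id\otimes\de_P$ is a coaction on $\ca(t)\otimes\ca_\la(P)$ which restricts to $\ca(s)$, and by construction $\be(s_p(\xi_p))=s_p(\xi_p)\otimes u_p$, so $s_*$ is equivariant for $\wh\de$ and $\be$. Consequently $s_*$ maps each spectral subspace $\N\T(X)_g$ into $\ca(s)_g$ and thus induces a surjective morphism $\N X\to\M:=\{\ca(s)_g\}_{g\in G}$ of Fell bundles.

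The crux is to show that $s_*$ is isometric on every fibre, and for this it suffices to show that it is injective on the core $\N\T(X)_e$: indeed, for $b\in\N\T(X)_g$ one has $b^*b\in\N\T(X)_e$ and $\|s_*(b)\|^2=\|s_*(b^*b)\|$, so injectivity on $\N\T(X)_e$ forces isometry on each fibre, whence $\N X\cong\M$. To prove injectivity on the core I would argue as in the proof of Lemma~\ref{P:1-1 Fock cexp}: by Proposition~\ref{P:purified} it is enough to treat $f=\sum_{p\in F}\wh t_p(k_p)\in B_{F,\wh t}$ for a finite $\vee$-closed set $F$, which is a C*-subalgebra on which injectivity is equivalent to isometry. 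If $s_*(f)=\sum_{p\in F}t_p(k_p)\otimes V_pV_p^*=0$, then slicing the second leg by the vector state $\omega_{p_0}(a)=\langle e_{p_0},a\,e_{p_0}\rangle$ at a minimal $p_0\in F$ annihilates every term with $p\neq p_0$ (only the indices dividing $p_0$ survive, and minimality leaves only $p_0$) and returns $t_{p_0}(k_{p_0})$; since $t$ is injective this gives $k_{p_0}=0$, and peeling off minimal elements inductively gives $f=0$. As the family $\{B_{F,\wh t}\}$ is directed with dense union in $\N\T(X)_e$, isometry on each $B_{F,\wh t}$ promotes to isometry on $\N\T(X)_e$. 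This minimal-element slicing is the main obstacle, since one must check that exactly the minimal index survives the compression, which is where the right LCM structure and the injectivity of $t$ enter.

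Finally I would invoke Exel's theorem. The conditional expectation of the normal coaction $\de_P$ is faithful, so $\id\otimes E_P$ is a faithful conditional expectation on $\ca(t)\otimes\ca_\la(P)$, and its restriction to $\ca(s)$ is a faithful conditional expectation compatible with the grading $\M$. By \cite[Theorem 3.3]{Exe97} this identifies $\ca(s)$ with the reduced cross-sectional algebra $\ca_\la(\M)$, and the Fell bundle isomorphism $\N X\cong\M$ together with Proposition~\ref{T:Fock is Fell} gives $\ca_\la(\M)\cong\ca_\la(\N X)\cong\T_\la(X)$. Tracking generators, the resulting isomorphism $\T_\la(X)\to\ca(s)\subseteq\ca(t)\otimes\ca_\la(P)$ sends $\ol t_p(\xi_p)$ to $t_p(\xi_p)\otimes V_p$, which is the asserted faithful $*$-homomorphism.
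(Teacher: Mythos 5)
Your proposal is correct and takes essentially the same route as the paper: you verify Nica-covariance of $t\otimes V$, prove injectivity of the induced map on the core by slicing at a minimal element of a finite $\vee$-closed set (your vector state $\omega_{p_0}$ in the second leg is exactly the paper's compression by $I\otimes P_{\bC e_{p_0}}$, resting on Proposition~\ref{P:purified}), and conclude via Proposition~\ref{T:Fock is Fell} and Exel's results \cite{Exe97} from the faithfulness of $\id\otimes E_P$. The additional layer you supply --- the explicit normal coaction $\de_P$ on $\ca_\la(P)$ and the Fell bundle morphism $\N X\to\M$ --- is a harmless elaboration of what the paper leaves implicit when it asserts that $\ca(t\otimes V)$ ``becomes a cross sectional algebra of $\N X$.''
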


\begin{proof}
Consider the Nica-covariant representation
\[
t\otimes V \colon X\longrightarrow C^*(t)\otimes C^*_{\la}(P): \xi_p\longmapsto t_p(\xi_p) \otimes V_p, \,\, \xi_p \in X_p, p \in P.
\]
We claim that the induced representation $(t \otimes V)_*: \N\T(X)\rightarrow \ca(t)\otimes \ca_{\la}(P)$ is faithful on the fixed point algebra $\N\T(X)_e$. According to Proposition~\ref{P:purified}, it suffices to verify injectivity on $B_{F, \wh{t}}\subseteq \N\T(X)$, where $F$ is an arbitrary finite {$\vee$-closed} subset of $P$.
Towards this end let $k_p \in \K X_p$ with $p \in F$ such that
\[
f := \sum_{p \in F} \wh{t}_p(k_p) \neq 0
\qand 
(t \otimes V)_*(f)=\sum_{p \in F} t_p(k_p) \otimes V_p V_p^* = 0.
\]
Let $p_0 \in F$ be minimal such that $\wh{t}_{p_0}(k_{p_0}) \neq 0$; then $k_{p_0} \neq 0$.
We directly compute
\[
t_{p_0}(k_{p_0}) 
= (I \otimes P_{\bC e_{p_0}}) \bigg( \sum_{p \in F}  t_p(k_p) \otimes V_p V_p^* \bigg) (I \otimes P_{\bC e_{p_0}}) 
= 0.
\]
Since $t$ is injective, we have that $k_{p_0} = 0$, a contradiction that establishes the desired injectivity for $(t \otimes V)_*$ on $\N\T(X)_e$.

It follows now that $(t\otimes V)_*$ is injective on each fiber of the bundle $\N X$ of Proposition~\ref{T:Fock is Fell} and so $C^*(t \otimes V)$ becomes a cross sectional algebra of $\N X$. According to Proposition~\ref{T:Fock is Fell}, $\T_\la(X)$ is the reduced cross sectional algebra of $\N X$ and so \cite[Theorem 3.3]{Exe97} implies a map
\begin{equation} \label{finally}
\ca (t) \otimes \ca_\la(P)  \longrightarrow \T_\la(X)  : t_p(\xi_p)\otimes V_p \longmapsto \ol{t}_p(\xi_p) .
\end{equation}
The canonical expectation of $C^*(t \otimes V)$ onto $(t\otimes V)_*(\N\T(X)_e)$ coincides with $\id \otimes E_P$, where $E_P$ is compression on the diagonal, and so it is faithful. By \cite[Proposition 3.7]{Exe97}, the map in (\ref{finally}) is faithful and the conclusion follows.
\end{proof}

As a consequence the injective representations of a product system $X$ that inherit the coaction of $G$ produce C*-covers for $(\T_\la(X)^+, G, \dep)$. 
\begin{proposition}\label{P:P cover}
Let $P$ be a right LCM subsemigroup of a group $G$ and let $X$ be a compactly aligned product system over $P$. 
Let $(B, G, \de)$ be a cosystem for which there exists an equivariant epimorphism
\[
\phi \colon \T_\la(X) \longrightarrow B.
\]
If $\phi|_{\ol{t}(X_e)}$ is faithful, then $\phi|_{\T_\la(X)^+}$ is completely isometric and therefore $(B, G, \de)$ forms a C*-cover for $(\T_\la(X)^+, G, \dep)$.
\end{proposition}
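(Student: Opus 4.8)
The plan is to sandwich the norm of an analytic element $a$ between $\|\phi(a)\|$ and itself, exploiting the three C*-algebras $\ca_\la(P)$, $\ca_\la(G)$ and $\ca(G)$ to move between the Fock picture over $P$ and the group picture over $G$. First I would set $t := \phi \circ \ol{t}$. Since $\ol{t}$ is the (Nica-covariant) Fock representation and $\phi$ is a $*$-epimorphism, $t$ is a Nica-covariant representation of $X$ with $\ca(t) = B$, and the hypothesis that $\phi$ is faithful on $\ol{t}(X_e) \cong A$ says precisely that $t$ is injective. Applying Proposition~\ref{P:P coa B} to $t$ then yields a faithful, hence completely isometric, $*$-homomorphism
\[
\Psi \colon \T_\la(X) \longrightarrow B \otimes \ca_\la(P), \qquad \ol{t}_p(\xi_p) \longmapsto b_p \otimes V_p, \quad b_p := \phi(\ol{t}_p(\xi_p)).
\]

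Since $\T_\la(X)^+ = \ol{\spn}\{\ol{t}_p(\xi_p) \mid p \in P,\ \xi_p \in X_p\}$, by continuity it suffices to estimate the norms of finite analytic sums $a = \sum_{p \in F} \ol{t}_p(\xi_p)$ and of their matrix amplifications. Because $\phi$ is equivariant and $\ol{t}_p(\xi_p)$ is homogeneous of degree $p$ for $\ol{\de}$, each $b_p$ lies in the spectral subspace $B_p$, so $\de(b_p) = b_p \otimes u_p$. Complete isometry of $\Psi$ gives $\|a\| = \|\sum_{p\in F} b_p \otimes V_p\|$, and I would then run the chain
\[
\Big\| \sum_{p \in F} b_p \otimes V_p \Big\| \ \le\ \Big\| \sum_{p \in F} b_p \otimes \la_p \Big\| \ \le\ \Big\| \sum_{p \in F} b_p \otimes u_p \Big\| \ =\ \big\| \de(\phi(a)) \big\| \ =\ \|\phi(a)\|.
\]
The first inequality holds because $V_p$ is the compression of $\la_p \in \ca_\la(G)$ to $\ell^2(P) \subseteq \ell^2(G)$, so $\sum b_p \otimes V_p$ is the compression of $\sum b_p \otimes \la_p$ by $I \otimes Q$ (with $Q$ the projection onto $\ell^2(P)$), and compressions are completely contractive; the second is contractivity of $\id_B \otimes \la$ for the canonical quotient $\la \colon \ca(G) \to \ca_\la(G)$; the two equalities use $\de(b_p) = b_p \otimes u_p$ together with faithfulness of the $*$-homomorphism $\de$ on the C*-algebra $B$. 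Since $\phi$ is completely contractive we also have $\|\phi(a)\| \le \|a\|$, whence $\|\phi(a)\| = \|a\|$; every step is compatible with passage to $M_n$, so $\phi|_{\T_\la(X)^+}$ is completely isometric.

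Finally, equivariance of $\phi$ together with $B = \ca(\phi(\ol{t}(X)))$ shows that $(B, G, \de)$ is a C*-cover of $(\T_\la(X)^+, G, \dep)$ in the sense of Definition~\ref{D:coaction}.

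The step I expect to be the crux is the upward chain of norm estimates, and in particular the observation that passing to the \emph{full} group C*-algebra $\ca(G)$ — where the coaction of $B$ lands and is faithful — produces the largest norm, which therefore dominates the Fock-algebra norm $\|a\|$ supplied by Proposition~\ref{P:P coa B}. The realization of $V_p$ as a corner of $\la_p$ is what links the reduced picture over $P$ to the group picture over $G$, and checking that all four maps involved remain completely positive under amplification is what upgrades the argument from isometric to completely isometric.
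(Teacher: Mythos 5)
Your proof is correct and takes essentially the same approach as the paper: the paper applies Proposition~\ref{P:P coa B} to $t=\phi\circ\ol{t}$ and then factors the identity on $\T_\la(X)^+$ through the completely contractive maps $\phi$, $\de$, and the ucp map $\psi\colon \ca(G)\to\B(\ell^2(P))$, $u_g\mapsto P_{\ell^2(P)}\la_g|_{\ell^2(P)}$, in a commuting diagram. Your chain of norm estimates $\bigl\|\sum_{p\in F} b_p\otimes V_p\bigr\|\le\bigl\|\sum_{p\in F} b_p\otimes\la_p\bigr\|\le\bigl\|\sum_{p\in F} b_p\otimes u_p\bigr\|=\|\phi(a)\|$ is precisely this map $\psi$ unrolled into its two factors (the quotient $\la\colon\ca(G)\to\ca_\la(G)$ followed by the corner onto $\ell^2(P)$), applied to homogeneous sums and extended by density.
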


\begin{proof}
Consider the unital completely positive map
\[
\psi \colon \ca(G) \longrightarrow \ca_\la(G) \longrightarrow \B(\ell^2(P)) : u_g \mapsto \la_g \mapsto P_{\ell^2(P)} \la_g |_{\ell^2(P)}
\]
which is multiplicative on the subalgebra of $\ca(G)$ generated by all $u_p$, $p \in P$. Since $\psi(u_p) = V_p$ for all $p\in P$, the following diagram of completely contractive homomorphisms
\[
\xymatrix{
\T_\la(X)^+ \ar[d]_{\phi} \ar[rr] & & B \otimes \ca_\la(P) \\
B \ar[rr] & & B \otimes \ca(G) \ar[u]_{\id \otimes \psi}
}
\]
commutes.
By Proposition \ref{P:P coa B} the upper horizontal map is a restriction of a faithful $*$-homomor\-phism and thus it is completely isometric.
Hence $\phi$ is completely isometric.
\end{proof}

\begin{definition}
Following \cite[Section 4]{CLSV11} we say that a representation $t$ of a product system $X$ is  \emph{gauge-compatible}, or simply \emph{equivariant} if $\ca(t)$ admits a coaction of $G$  that makes the canonical epimorphism $\T(X)\rightarrow \ca(t)$ equivariant with respect to the natural  (gauge) coaction of $G$ on $\T(X)$. 
\end{definition}
Carlsen, Larsen, Sims and Vittadello  proposed the idea of a co-universal C*-algebra with respect to gauge-compatible, injective,  Nica-covariant  representations of $X$. Roughly speaking, such a co-universal C*-algebra is  the smallest C*-algebra carrying a coaction of $G$ that is generated by an  equivariant,  injective,  Nica-covariant representation of $X$. For the precise formulation see Definition \ref{D:couniversal} below. Carlsen, Larsen, Sims and Vittadello went on to prove that under various hypotheses on the product system, the reduced cross sectional algebra of the  Fell bundle associated with $\N\O(X)$ does satisfy the co-universal property \cite[Theorem 4.1]{CLSV11}.    

\begin{definition}\label{D:couniversal}
 Let $P$ be a right LCM subsemigroup of a group $G$ and  let $X$ be a compactly aligned product system over $P$.
Suppose  $(\couni, G,\gamma)$ is a cosystem and $j:X \to \couni$ is a Nica-covariant isometric representation, with integrated version denoted by $j_*: \N\T(X) \to \couni$.
We say that  $(\couni, G, \gamma, j)$ has the \emph{co-universal property for equivariant, injective, Nica-covariant  representations of $X$} if
\begin{enumerate}
\item 
$j_e$ is faithful;
\item $j_*: \N\T(X) \to \couni$ is  $\hat{\de}$-$\gamma$ equivariant; and 
\item for every  equivariant, injective,  Nica-covariant representation $t: X \rightarrow \ca(t)$, 
there is a surjective $*$-homomorphism $\phi : \ca (t) \rightarrow \couni$ such that $$\phi t_p(\xi_p) = j_p(\xi_p), \mbox{ for all } \xi_p \in X_p \mbox{ and } p \in P.$$
\end{enumerate}
Notice that, as observed at the beginning of \cite[Section 4]{CLSV11},  the map $\phi$ is automatically equivariant because $j_*$ and $t_*$ are surjective.
\end{definition}
\begin{remark}
We have eschewed the notation $\N\O^r(X)$  used in \cite{CLSV11}  because there is a certain degree of ambiguity in relation to its meaning.  On the one hand, it is clear from \cite[Introduction]{CLSV11} and the statement of  \cite[Theorem 4.1]{CLSV11} 
that $\N\O^r(X)$ is implicitly intended to mean any C*-algebra that satisfies the co-universal property, 
while on the other hand at the start of the proof of \cite[Theorem 4.1]{CLSV11}, $\N\O^r(X)$ is explicitly defined to be the reduced  cross sectional algebra of the Fell bundle 
$\N\O X:= \{ [\N\O(X)]_g \}_{g \in G}$ of  
the natural coaction of $G$ on $\N\O(X)$. This causes no problem so long as the product system $X$ satisfies the assumptions of \cite[Theorem 4.1]{CLSV11}, but there is a definite clash for some examples that do not satisfy those hypothesis, see e.g. \cite[Remark 4.2]{CLSV11}. We point out that there is no ambiguity in \cite{DK20} where the notation is exclusively used to denote any C*-algebra that satisfies the co-universal property.
\end{remark}

Our next result shows that the C*-envelope of the tensor algebra $\T_\la(X)^+$ taken with its  natural coaction satisfies the co-universal property, thus establishing the existence of  a co-universal object for general compactly aligned product systems over  right LCM semigroups. This completes the program initiated in \cite{CLSV11} and continued in \cite{DK20}.


\begin{theorem} \label{T:co-univ}
Let $P$ be a right LCM subsemigroup of a group $G$ and let $X$ be a compactly aligned product system over $P$.
Let 
$\dep \colon \T_\la(X)^+ \rightarrow \T_\la(X)^+ \otimes \ca(G)$ be
the restriction of the coaction from Proposition \ref{P:f coaction} to $\T_\la(X)^+$. Then the C*-envelope  
$
 (\cenv(\T_\la(X)^+, G, {\dep}),\delenv,\iotenv)
$
of the cosystem $(\T_\la(X)^+, G, {\dep})$ 
 satisfies the co-universal property associated  with $X$.
In particular, the canonical coaction on the co-universal object is  normal.
\end{theorem}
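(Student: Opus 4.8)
The plan is to exhibit the explicit quadruple realizing the co-universal property and then check the three conditions of Definition~\ref{D:couniversal}. I would set $\couni := \cenv(\T_\la(X)^+, G, \dep)$ with its coaction $\gamma := \delenv$ and define $j_p(\xi_p) := \iotenv(\ol{t}_p(\xi_p))$ for $\xi_p \in X_p$. Since $\T_\la(X)^+$ generates $\T_\la(X)$ and the coaction $\ol{\de}$ of Proposition~\ref{P:f coaction} restricts to $\dep$, the triple $(\T_\la(X), G, \ol{\de})$ is a C*-cover of the cosystem $(\T_\la(X)^+, G, \dep)$; hence minimality of the C*-envelope gives a canonical equivariant $*$-epimorphism $q \colon \T_\la(X) \to \couni$ with $q|_{\T_\la(X)^+} = \iotenv$, so that $q \circ \ol{t}_p = j_p$. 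Because $q$ is completely isometric on $\T_\la(X)^+$ and $\ol{t}_e$ is faithful, $j$ is an injective Nica-covariant representation with $j_e$ faithful, which is (i); its integrated form factors as $j_* = q \circ \Phi$, a composite of equivariant maps, giving (ii). Normality of $\gamma$ (the final clause) is immediate from Corollary~\ref{C:normal}, which also supplies the faithful conditional expectation $E_\couni \colon \couni \to [\couni]_e$.

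The work is then in (iii): for every equivariant, injective, Nica-covariant $t \colon X \to \ca(t)$ I must build a surjection $\phi \colon \ca(t) \to \couni$ with $\phi \circ t_p = j_p$. Equivariance grades $\ca(t)$ by the Fell bundle $\{\ca(t)_g\}_{g \in G}$, and by \cite[Theorem 3.3]{Exe97} there is a canonical equivariant reduction $R \colon \ca(t) \twoheadrightarrow D_t := \ca_\la(\{\ca(t)_g\})$ onto the reduced cross-sectional algebra. The composite $[t] := R \circ t_*$ is an injective Nica-covariant representation of $X$ into the reduced algebra $D_t$ (injectivity because $R$ is faithful on the unit fibre and $t_e$ is faithful). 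The plan is to show that $D_t$, with its normal coaction and the map $\sigma_t \colon \T_\la(X)^+ \to D_t$, $\ol{t}_p(\xi_p) \mapsto [t]_p(\xi_p)$, is a C*-cover of $(\T_\la(X)^+, G, \dep)$. Granting this, minimality of the C*-envelope produces an equivariant $*$-epimorphism $\psi \colon D_t \to \couni$ with $\psi \circ \sigma_t = \iotenv$, and then $\phi := \psi \circ R$ works, since $\phi(t_p(\xi_p)) = \psi([t]_p(\xi_p)) = \psi(\sigma_t(\ol{t}_p(\xi_p))) = \iotenv(\ol{t}_p(\xi_p)) = j_p(\xi_p)$.

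Everything thus reduces to showing $\sigma_t$ is well defined and completely isometric (equivariance being clear), i.e. to the identity $\|\Phi^+(a)\| = \|[t]_*^+(a)\|$ for all $a$ (and all amplifications), where $\Phi^+ := \Phi|_{\N\T(X)^+}$ and $[t]_*^+ := [t]_*|_{\N\T(X)^+}$. The easy half $\|\Phi^+(a)\| \le \|[t]_*^+(a)\|$ I would get from Proposition~\ref{P:P coa B} applied to $[t]$: it yields a faithful $*$-homomorphism $\Theta \colon \T_\la(X) \to D_t \otimes \ca_\la(P)$ with $\ol{t}_p(\xi_p) \mapsto [t]_p(\xi_p) \otimes V_p$, while the normal coaction on $D_t$ composed with the compression $\ca_\la(G) \to \B(\ell^2(P))$, $\la_g \mapsto P_{\ell^2(P)} \la_g|_{\ell^2(P)}$ (exactly as in the proof of Proposition~\ref{P:P cover}) gives a completely contractive $\Xi \colon D_t \to D_t \otimes \B(\ell^2(P))$ with $\Xi([t]_p(\xi_p)) = [t]_p(\xi_p) \otimes V_p$. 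Since $\Xi \circ [t]_*^+ = \Theta \circ \Phi^+$ and $\Theta$ is completely isometric, $\|\Phi^+(a)\| = \|\Theta\Phi^+(a)\| = \|\Xi [t]_*^+(a)\| \le \|[t]_*^+(a)\|$.

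I expect the main obstacle to be the complementary inequality $\|[t]_*^+(a)\| \le \|\Phi^+(a)\|$, which says the Fock representation realises the \emph{smallest} tensor-algebra norm among injective Nica-covariant representations into reduced cross-sectional algebras. Concretely this is the reduced-functoriality statement that the surjective morphism of Fell bundles $\N X \to \{\ca(t)_g\}$ induced by $t_*$ descends to a (necessarily equivariant, unit-fibre-faithful) $*$-epimorphism $r_t \colon \T_\la(X) = \ca_\la(\N X) \to \ca_\la(\{\ca(t)_g\}) = D_t$, after which $\sigma_t = r_t|_{\T_\la(X)^+}$ is automatically completely contractive and Proposition~\ref{P:P cover} then forces it to be completely isometric. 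The delicate point is that reduced cross-sectional functoriality is not formal for non-amenable $G$, and the role of Proposition~\ref{P:P coa B} is precisely to absorb this by passing to $\ca(t) \otimes \ca_\la(P)$, where tensoring by the isometries $V_p$ recovers the full bundle $\N X$ and restores faithfulness on the core. Once $\sigma_t$ is established as a completely isometric equivariant C*-cover embedding, the argument above closes and the faithful expectation $E_\couni$ ensures the resulting $\phi$ respects the gradings, completing the verification of the co-universal property.
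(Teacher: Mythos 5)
Your architecture coincides with the paper's: pass from $\ca(t)$ to the reduced cross-sectional algebra $D_t := \ca_\la(\{\ca(t)_g\}_{g\in G})$ via Exel's reduction, show $D_t$ is a C*-cover of $(\T_\la(X)^+, G, \dep)$, and invoke minimality of the equivariant C*-envelope; your checks of (i), (ii), and of normality via Corollary~\ref{C:normal} are fine, and your ``easy half'' is just a re-derivation of Proposition~\ref{P:P cover}. But there is a genuine gap exactly at the point you flag and never close: the existence of the equivariant $*$-epimorphism $r_t \colon \T_\la(X) \to D_t$ (equivalently, the inequality $\|[t]_*(a)\| \le \|\Phi(a)\|$) is asserted, not proved, and the mechanism you propose for it is the wrong one. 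Proposition~\ref{P:P coa B} produces the faithful embedding $\T_\la(X) \hookrightarrow \ca(t)\otimes \ca_\la(P)$, $\ol{t}_p(\xi_p)\mapsto t_p(\xi_p)\otimes V_p$, and therefore only yields the inequality you already used in the opposite direction; tensoring by the isometries $V_p$ cannot ``restore'' a quotient map onto $D_t$, since the bundle morphism $\N X \to \{\ca(t)_g\}$ is generally not injective on fibres (e.g.\ when $t$ is strongly covariant it kills part of the core), so nothing in \ref{P:P coa B} addresses the descent.

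Moreover your diagnosis of why the descent is delicate is mistaken: for \emph{surjective} grading-preserving maps, passage to reduced cross-sectional algebras is formal, by the faithful-expectation argument, and this is precisely how the paper closes the gap. Concretely: by Proposition~\ref{T:Fock is Fell}, $\T_\la(X)\simeq \ca_\la(\N X)$, so Lemma~\ref{P:1-1 Fock cexp} makes $\wh{E} := (\Phi|_{[\N\T(X)]_e})^{-1}\,\ol{E}\,\Phi$ a well-defined conditional expectation on $\N\T(X)$ with
\[
\ker\Phi = \{ f \in \N\T(X) \mid \wh{E}(f^*f) = 0 \}.
\]
The composite $\phi\colon \N\T(X) \to \ca(t) \to D_t$ is equivariant, hence intertwines $\wh{E}$ with the faithful expectation $E'$ on $D_t$; so for $f \in \ker\Phi$ one gets $E'\big(\phi(f)^*\phi(f)\big) = \phi\big(\wh{E}(f^*f)\big) = 0$ and thus $\phi(f)=0$. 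Therefore $\phi$ factors through $\Phi$, producing your $r_t$, after which Proposition~\ref{P:P cover} gives that $D_t$ is a C*-cover and your argument closes. (What genuinely requires exactness of $G$ is injectivity in reduced quotient sequences --- the issue arising in Corollary~\ref{C:exa Seh} --- not the existence of this epimorphism.)
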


\begin{proof}
By definition $\cenv(\T_\la(X)^+, G,\dep)$ is generated by an injective Nica-covariant, $G$-compatible representation of $X$.
It remains to show that it has the required co-universal property.

Let $\ol{E}$ be the faithful conditional expectation on $\T_\la(X)$ and let $\Phi \colon \N\T(X) \to \T_\la(X)$ be the canonical $*$-epimorphism.
Then we have that
\[
\ker \Phi  = \{f \in \N\T(X) \mid \ol{E} \Phi(f^*f) = 0\}.
\]
In particular, since $\Phi$ is faithful on the fixed point algebra by Lemma~\ref{P:1-1 Fock cexp}, we get that 
$$\wh{E} := (\Phi|_{[\N\T(X)]_e})^{-1} \ol{E} \Phi$$ 
is the conditional expectation on $\N\T(X)$.

Let $t$ be  an injective, Nica-covariant, equivariant representation  of $X$.
Then $\ca(t)$ admits a $G$-grading and let us write $\B = \{B_g\}_{g \in G}$ for this grading of $\ca(t)$.
Due to the existence of the conditional expectation on $\ca(t)$, by \cite[Theorem 3.3]{Exe97}  there exists a canonical equivariant $*$-epimorphism
\[
\phi \colon \N\T(X) \longrightarrow \ca(t) \longrightarrow \ca_\la(\B)
\]
where $\ca_\la(\B)$ is the reduced cross sectional  C*-algebra of the Fell bundle $\B$. Let us write $E'$ for the associated faithful conditional expectation on $\ca_\la(\B)$.
For $f \in \ker \Phi$ we have that 
$$\wh{E}(f^*f) = (\Phi|_{[\N\T(X)]_e})^{-1} \ol{E} \Phi(f^*f) = 0.$$
As $\phi$ intertwines the conditional expectations $\wh{E}$ and $E'$, we derive that $E'(\phi(f^*f)) = 0$ and so $\phi(f) = 0$, because $E'$ is faithful.
Since $f$ was arbitrary in $\ker \Phi$ we get that $\phi(\ker \Phi) = \{0\}$.
Hence there is an induced $*$-homomorphism $\phi'$ that makes the following diagram
\[
\xymatrix{
\N\T(X) \ar[rr]^{\phi} \ar[rd]^{\Phi} & & \ca_\la(\B) \\
& \T_\la(X) \ar@{.>}[ur]^{\phi'} &
}
\]
commutative.
By construction $\phi'$ is equivariant.
Since $t_e$ is faithful we have that $A \hookrightarrow B_e \subseteq \ca_\la(\B)$ faithfully.
Then, by Proposition \ref{P:P cover} we have that $\ca_\la(\B)$ is a C*-cover of $(\T_\la(X)^+, G, \dep)$.
Therefore we have the following $*$-epimorphisms
\[
\ca(t) \longrightarrow \ca_\la(\B) \longrightarrow \cenv(\T_\la(X)^+, G, \dep),
\]
which establishes that  $\cenv(\T_\la(X)^+, G, \dep)$ satisfies the  co-universal property for $X$. 
The last sentence in the statement of the theorem follows from Corollary~\ref{C:normal}.
\end{proof}

\begin{remark}\label{R:aplug4env}
Theorem \ref{T:co-univ} shows that every compactly aligned product system 
over a right LCM subsemigroup of a group
 has an associated co-universal C*-algebra. This generalizes \cite[Theorem 4.1]{CLSV11} 
by removing the assumption that $X$ is injective or that $P$ is directed and the augmented left actions are injective,
 and it also generalizes \cite[Theorem 3.3]{DK20} by removing the assumption that $P$ is abelian.  
We have been  able to do this through the use of nonselfadjoint techniques adapted to the setting of operator algebras with a coaction,
which ultimately relies on the existence of the usual C*-envelope, through our Theorem \ref{T:co-env}. 
\end{remark}

\section{Co-universality and Sehnem's covariance algebras} \label{S;Sehnem}
In \cite{CLSV11}, Carlsen, Larsen, Sims and Vittadello show that under certain hypothesis on a product system $X$, the reduced cross sectional algebra of the Fell bundle $\N\O X:= \{ [\N\O(X)]_g \}_{g \in G}$ of the natural coaction of $G$ on the C*-algebra $\N\O(X)$ satisfies the co-universal property. But examples such as \cite[Remark~4.2]{CLSV11} indicate that the same bundle may fail to produce a co-universal $\ca$-algebra for product systems $X$ outside the framework of \cite{CLSV11}. This raises the question of whether a different bundle might do the job. We settle this question in the present section by considering the  Fell bundle determined by the natural coaction on Sehnem's covariance algebra \cite{Seh18}.
We begin by establishing the notation and reviewing the basic details of Sehnem's construction.

Let $P$ be a unital subsemigroup of a group $G$ and let $X = \{X_p\}_{p\in P}$ be a product system over $P$ with coefficients in $A:=X_e$.
For a finite set $F \subseteq G$ let
\[
K_F := \bigcap_{g \in F} gP.
\]
For $r \in P$ and $g \in F$ define the ideal of $A$ given by
\[
I_{r^{-1} K_{\{r,g\}}} :=
\begin{cases}
\bigcap_{t \in K_{\{r,g\}}} \ker \varphi_{r^{-1}t} & \text{if } K_{\{r,g\}} \neq \mt \text{ and } r \notin K_{\{r,g\}},\\
A & \text{otherwise}.
\end{cases}
\]
Then let
\[
I_{r^{-1} (r \vee F)} := \bigcap_{g \in F} I_{r^{-1} K_{\{r,g\}}},
\]
and let the C*-correspondences
\[
X_F := \oplus_{r \in P} X_r I_{r^{-1} (r \vee F)}
\qand
X_F^+ := \oplus_{g \in G} X_{gF}.
\]
For every $p \in P$ we define
\[
t_{F, p}(\xi_p) (\eta_r) = M_{p,r}(\xi_p \otimes \eta_r) \in X_{pr} I_{(pr)^{-1}(pr \vee pF)}, \foral \eta_r \in X_r I_{r^{-1} (r \vee F)}.
\]
This is well-defined as $I_{r^{-1}(r \vee F)} = I_{(pr)^{-1}(pr \vee pF)}$ for all $r \in P$, and $I_{r^{-1}(r \vee F)} = I_{(s^{-1}r)^{-1}(s^{-1}r \vee s^{-1}F)}$ for all $r \in sP$.
Therefore we obtain a representation $t_F:=\{t_{F, p}\}_{p \in P}$ of $X$ on $\L(X_F^+)$ that integrates to a representation
\begin{equation}\label{E:repsPhi}
\Phi_F \colon \T(X) \longrightarrow \L(X_F^+).
\end{equation}
Now let the projections 
\[
Q_{g,F} \colon X_F^+ \longrightarrow X_{gF}
\]
and define 
\[
\nor{f}_F := \nor{Q_{e,F} \Phi_F(f) Q_{e,F}} \foral f \in \left[ \T(X) \right]_e.
\]
In particular we have that 
\[
t_{F,p}(\xi_p) Q_{g, F} = Q_{pg, F} t_{F,p}(\xi_p)
\qand
t_{F,p}(\xi_p)^* Q_{g,F} = Q_{p^{-1}g, F} t_{F,p}(\xi_p)^*.
\]
and so $Q_{e,F}$ is reducing for the fixed point algebra under $\Phi_F$.

\begin{definition}\cite[Definition 3.2]{Seh18}
A Toeplitz representation is called \emph{strongly covariant} if it vanishes on the ideal $\I_e \lhd \left[ \T(X) \right]_e$ given by
\[
\I_e := \{f \in \left[ \T(X) \right]_e \mid \lim_F \nor{f}_F = 0\},
\]
where the limit is taken with respect to the partial order induced by inclusion on finite sets of $P$.
We denote by $A \times_X P$ the universal C*-algebra with respect to the strongly covariant representations of $X$.
\end{definition}

That is, $A \times_X P$ is the quotient $\T(X)/\I_\infty$ for the ideal $\I_\infty \lhd \T(X)$ generated by $\I_e$. For example, in the simplest case of a product system generated by a single correspondence, $A \times_X P$  coincides with the Cuntz-Pimsner algebra, see \cite[Section 4]{Seh18}.
One of the important points of Sehnem's theory is that $A \times_X P$ does not depend on the group $G$ where $P$ embeds, while $A \hookrightarrow A \times_X P$ faithfully.
As a quotient by an induced ideal of $\T(X)$, it follows that $A \times_X P$ inherits the  coaction of $G$ \cite[Lemma 3.4]{Seh18}.
The following is the main theorem of \cite{Seh18}.

\begin{theorem}\cite[Theorem 3.10]{Seh18}
Let $P$ be a unital subsemigroup of a group $G$ and let $X$ be a product system over $P$ with coefficients in $A$.
Then a $*$-homomorphism of $A \times_X P$ is faithful on $A$ if and only if it is faithful on the fixed point algebra $\left[ A \times_X P \right]_e$.
\end{theorem}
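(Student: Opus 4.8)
The reverse implication is immediate: the coefficient algebra sits inside the fixed point algebra as $A = X_e \subseteq \left[ A \times_X P \right]_e$ through the degree-$e$ generators, so any $*$-homomorphism that is faithful on $\left[ A \times_X P \right]_e$ is automatically faithful on $A$. The whole difficulty is the forward implication, and the plan is to show that a $*$-homomorphism $\psi$ faithful on $A$ is in fact \emph{isometric} on the core $\left[ A \times_X P \right]_e$. Passing to the range and compressing to an essential subspace, I may assume $\psi$ acts nondegenerately on a Hilbert space $H$ with $\psi|_A$ faithful and nondegenerate, and I set $t := \psi \circ q \circ \wt{t}$, where $q \colon \T(X) \to A \times_X P$ is the quotient map; this is a strongly covariant representation of $X$ with $t_e = \psi|_A$ faithful, whose integrated form agrees with $\psi$ on the core.

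By construction of $A \times_X P$ the $\ca$-norm of the core is computed spatially: for $f \in \left[ \T(X) \right]_e$ the quotient norm of $q(f)$ equals $\lim_F \nor{f}_F = \lim_F \nor{Q_{e,F} \Phi_F(f) Q_{e,F}}$, where the net $(\nor{f}_F)_F$ converges to the quotient norm along the finite subsets of $P$ (this is exactly the content of the definition of $\I_e$). Since $\psi$ is contractive we already have $\nor{\psi(q(f))} \le \lim_F \nor{f}_F$, so it suffices to produce the reverse inequality. To this end, for each finite $F$ I regard $X_F = \oplus_{r \in P} X_r I_{r^{-1}(r \vee F)}$ as a right Hilbert $A$-module and form the induced representation $\rho_F$ of $\L(X_F)$ on the interior tensor product $X_F \otimes_A H$, where $A$ acts on $H$ through $\psi$. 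Because $\psi|_A$ is faithful, $\rho_F$ is faithful on $\K(X_F)$, hence on $\L(X_F) = M(\K(X_F))$; in particular $\nor{\rho_F(Q_{e,F} \Phi_F(f) Q_{e,F})} = \nor{f}_F$, so $\rho_F$ reproduces the spatial norm exactly.

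It remains to compare $\rho_F$ with $\psi$. The natural candidate is the map $W_F \colon X_F \otimes_A H \to H$ determined by $W_F(\xi_r \otimes h) = t_r(\xi_r) h$, which intertwines the tautological $X$-action on $X_F \otimes_A H$ with $t$, and therefore intertwines the core action through $\rho_F$ with $\psi$. A direct computation gives
\[
\nor{W_F \big( \textstyle\sum_r \xi_r \otimes h_r \big)}^2 = \sum_{r} \ip{h_r, \psi(\ip{\xi_r, \xi_r}) h_r} + \sum_{r \neq s} \ip{h_r, t_r(\xi_r)^* t_s(\xi_s) h_s},
\]
so that $W_F$ is isometric precisely to the extent that the off-diagonal overlaps $t_r(\xi_r)^* t_s(\xi_s)$ vanish. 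The key point is that the ideals $I_{r^{-1}(r \vee F)}$ cutting down each fibre of $X_F$ are engineered so that these overlaps are annihilated in the limit as $F$ increases; thus $W_F$ becomes asymptotically isometric and, since it intertwines the two core actions, yields in the limit $\nor{\psi(q(f))} \ge \lim_F \nor{\rho_F(Q_{e,F}\Phi_F(f)Q_{e,F})} = \lim_F \nor{f}_F$. Combined with the contractive estimate this shows $\psi$ is isometric, hence faithful, on $\left[ A \times_X P \right]_e$.

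The main obstacle is precisely this last comparison: proving that the cross terms $t_r(\xi_r)^* t_s(\xi_s)$ become negligible once the strong-covariance ideals are imposed, equivalently that the orthogonalised (internal) model $\rho_F$ is captured by the possibly non-orthogonal ranges $\ol{t_r(X_r)H}$ inside $H$ in the limit. This is where Sehnem's defining data $K_F = \bigcap_{g \in F} gP$ and $I_{r^{-1}(r \vee F)} = \bigcap_{g \in F} I_{r^{-1} K_{\{r,g\}}}$ enter, since they quotient out exactly the directions along which distinct creation operators fail to be orthogonal; verifying this vanishing, together with the compatibility of the ideals under the connecting maps $i_r^{pr}$ already used to define $t_{F,p}$, is the technical heart of the argument.
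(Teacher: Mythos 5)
The paper does not actually prove this statement: it is imported verbatim from Sehnem \cite[Theorem 3.10]{Seh18}, so your attempt has to be measured against Sehnem's argument rather than anything in this paper (the closest in-paper material is the $\prod_F \B(X_F)/c_0$ diagram preceding Corollary \ref{C:red seh inj}). Your easy direction is fine, and two of your structural moves are sound: realizing $\nor{f}_F$ exactly via the representation of $\L(X_F)$ induced from $\psi|_A$ on $X_F \otimes_A H$ (Rieffel induction is faithful on $\K(X_F)$, hence on $\L(X_F)$, when $\psi|_A$ is faithful), and reducing the theorem to the norm identity $\nor{\psi(q(f))} \geq \lim_F \nor{f}_F$ on the core. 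But note that $\nor{q(f)} = \lim_F \nor{f}_F$ is \emph{not} ``exactly the content of the definition of $\I_e$'': the definition only identifies the kernel of the limit seminorm. The identity additionally needs Sehnem's lemma that $\I_\infty \cap \left[\T(X)\right]_e = \I_e$ (so that the core of the quotient is $\left[\T(X)\right]_e/\I_e$), plus the automatic isometry of the resulting injective $*$-homomorphism into $\prod_F \B(X_F)/c_0$. These are quotable facts, but they must be quoted, not folded into a definition.

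The genuine gap is the comparison step, and you have named it yourself: the entire difficulty of the theorem is compressed into the unproven assertion that the cross terms $t_r(\xi_r)^* t_s(\xi_s)$ ``are annihilated in the limit,'' for which no mechanism is offered --- this \emph{is} the theorem, not a lemma toward it. Worse, for any fixed $F$ the scaffolding around it breaks in two places. First, $W_F$ is isometric on each summand $X_r I_{r^{-1}(r \vee F)} \otimes_A H$ but need not be bounded on the full direct sum: if the vectors $t_r(\xi_r)h_r$ point in a common direction for $n$ distinct values of $r$, then $\nor{W_F}$ grows like $\sqrt{n}$, so the ``natural candidate'' is not a well-defined operator absent precisely the orthogonality you are trying to establish. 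Second, the inference from ``$W_F$ intertwines the tautological $X$-action with $t$'' to ``$W_F$ intertwines the core actions'' is false: the creation relation $t_p(\xi) W_F = W_F\, t_{F,p}(\xi)$ does hold, but the adjoint relation fails exactly when $r \notin pP$ yet $t_p(\xi)^* t_r(\xi_r) \neq 0$ in $\B(H)$ --- on $X_F$ annihilation respects the $P$-grading, while in an arbitrary strongly covariant representation it does not, and intertwining of a non-isometric map does not pass to adjoints. So you cannot conclude $\nor{f}_F = \nor{\rho_F(Q_{e,F}\Phi_F(f)Q_{e,F})} \leq \nor{\psi(q(f))}$ for any fixed $F$, and the limit offers no rescue without a quantitative estimate that is nowhere supplied. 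Sehnem's actual proof avoids any such global intertwiner: she analyzes the core finite set by finite set, filtering the subalgebras generated by $\{\wt{t}_p(\K X_p) \mid p \in F\}$ through the ideals $I_{r^{-1}(r\vee F)}$ and propagating faithfulness from $A$ through induced representations on the modules $X_F$ step by step, which is exactly the local, ideal-by-ideal bookkeeping that your single asymptotic comparison tries, and fails, to shortcut.
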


The construction of Sehnem \cite{Seh18} encompasses a number of variants that have appeared in the literature.
This applies to the case where $(G,P)$ is a weak quasi-lattice ordered pair and $X$ is a compactly aligned product system such that  $X$ is faithful, or  $P$ is directed and the representation of $X$ in $\N\O(X)$ is faithful.
In this case Sehnem \cite[Proposition 4.6]{Seh18} obtains that $A \times_X P$ is the Cuntz-Nica-Pimsner algebra $\N\O(X)$ of Sims-Yeend \cite{SY10}.
Our next theorem  confirms that the Fell bundle of the covariance algebra $A \times_X P$ provides the right setup for co-universality.
Indeed,  our equivariant C*-envelope coincides with the reduced cross sectional C*-algebra of the Fell bundle determined by the natural coaction on $A \times_X P$. 

\begin{theorem}\label{T:co-un is Fell}
Let $P$ be a right LCM subsemigroup of a group $G$ and let  $X$ be a compactly aligned product system over $P$ with coefficients from $A$.
Consider the Fell bundle
\[
\S\C X := \{ [A \times_X P]_g \}_{g \in G}
\]
induced by the natural coaction of $G$ on $A \times_X P$.
Then the cross sectional algebra and the reduced cross sectional algebra of $\S\C X$ are isomorphic to the covariance algebra and to the C*-envelope, respectively: 
\[
\ca(\S\C X) \simeq A \times_X P \qand \ca_\la(\S\C X) \simeq  \cenv(\T_\la(X)^+, G, \dep).
\]
\end{theorem}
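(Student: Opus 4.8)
The plan is to prove the two stated isomorphisms separately. For the first, $\ca(\S\C X) \simeq A \times_X P$, I would realize $A \times_X P$ as the full cross-sectional algebra of $\S\C X$, following the template of the first part of Proposition~\ref{T:Fock is Fell}. Since $A \times_X P$ carries the natural coaction of $G$ by \cite[Lemma~3.4]{Seh18}, its spectral subspaces are precisely the fibres of $\S\C X$, and the universal property of $\ca(\S\C X)$ yields a canonical equivariant $*$-epimorphism $\ca(\S\C X)\twoheadrightarrow A\times_X P$. To invert it, I would take any C*-algebra $\A$ graded by $\S\C X$, with grading isomorphisms $\phi_g\colon [A\times_X P]_g \to \A_g$, and set $t_p := \phi_p \circ s_p$, where $s=\{s_p\}$ is the universal strongly covariant representation of $X$ in $A\times_X P$. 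A computation identical to the one in Proposition~\ref{T:Fock is Fell} shows that $t$ is a Toeplitz (hence Nica-covariant) representation, and since its integrated form equals $(\oplus_g \phi_g)\circ s_*$ it vanishes on Sehnem's ideal $\I_\infty$; thus $t$ is strongly covariant and integrates to a $*$-homomorphism $A\times_X P \to \A$ restricting to $\phi_g$ on each fibre. This exhibits $A\times_X P$ as satisfying the universal property of $\ca(\S\C X)$, giving the first isomorphism.

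For the second isomorphism, $\ca_\la(\S\C X) \simeq \cenv(\T_\la(X)^+, G, \dep)$, I would verify that $(\ca_\la(\S\C X),j)$ satisfies the co-universal property of Definition~\ref{D:couniversal} and then invoke uniqueness of co-universal objects together with Theorem~\ref{T:co-univ}. Here $j\colon X \to A\times_X P \to \ca_\la(\S\C X)$ is the composition of $s$ with the regular representation, and $j_*\colon \N\T(X)\to \ca_\la(\S\C X)$ its integrated form (well defined since strong covariance implies Nica-covariance in the compactly aligned setting). Condition (i) is immediate: $j_e$ is the embedding $A\hookrightarrow [A\times_X P]_e \hookrightarrow \ca_\la(\S\C X)$, faithful because $A\hookrightarrow A\times_X P$ is faithful and the unit fibre embeds faithfully into any reduced cross-sectional algebra. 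Condition (ii) holds because the dual coaction on $\ca_\la(\S\C X)$ is intertwined at each stage of $\N\T(X)\to A\times_X P\to\ca_\la(\S\C X)$.

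The substance is condition (iii): for every equivariant, injective, Nica-covariant $t\colon X\to\ca(t)$ I must produce a surjection $\ca(t)\twoheadrightarrow\ca_\la(\S\C X)$ intertwining the representations, equivalently $\ker t_* \subseteq \ker j_*$ inside $\N\T(X)$. Writing $E_\N$, $E_{\S\C}$ and $E_t$ for the conditional expectations onto the respective unit fibres, faithfulness of $E_{\S\C}$ gives the clean description $\ker j_* = \{f : E_\N(f^*f)\in N_e\}$, where $N_e := \ker\big([\N\T(X)]_e \to [A\times_X P]_e\big)$, while the intertwining $E_t\circ t_* = t_*\circ E_\N$ gives $\ker t_* \subseteq \{f: E_\N(f^*f)\in L_e^t\}$ with $L_e^t := \ker\big(t_*|_{[\N\T(X)]_e}\big)$. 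Thus (iii) reduces to the fibrewise inclusion $L_e^t \subseteq N_e$, which is exactly the assertion that the covariance algebra realizes the smallest possible unit fibre among representations injective on $A$.

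I expect this last inclusion to be the main obstacle, and it is where Sehnem's theory is indispensable. Concretely, $N_e$ is cut out by the vanishing of the boundary seminorm $f\mapsto \lim_F \|f\|_F$, and the required statement is that this seminorm is dominated by $\|t_*(\cdot)\|$ for every injective Nica-covariant $t$ — a lower-bound property built into Sehnem's construction and underlying \cite[Theorem~3.10]{Seh18}. Granting $L_e^t\subseteq N_e$, condition (iii) follows, so $(\ca_\la(\S\C X),j)$ is co-universal; since $\cenv(\T_\la(X)^+,G,\dep)$ is co-universal by Theorem~\ref{T:co-univ}, applying the co-universal property of each to the canonical representation of the other yields mutually inverse equivariant $*$-epimorphisms, hence the isomorphism. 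As a consistency check, taking $t$ to be the Fock representation already gives $\ker\Phi = \{f: E_\N(f^*f)=0\}\subseteq\ker j_*$, so $\T_\la(X)\twoheadrightarrow\ca_\la(\S\C X)$ equivariantly; by Proposition~\ref{P:P cover} this makes $\ca_\la(\S\C X)$ a C*-cover of the cosystem, and the whole point of (iii) is that the resulting surjection onto the C*-envelope is in fact injective.
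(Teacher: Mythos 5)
Your first isomorphism is fine and is exactly the paper's route (repeat the grading argument of Proposition~\ref{T:Fock is Fell}, noting the strong covariance relations live in the unit fibre), and your reductions of conditions (i), (ii) and of (iii) to the fibrewise inclusion $L_e^t \subseteq N_e$ are all correct. The genuine gap is that this inclusion is precisely the statement you cannot cite: Sehnem's Theorem 3.10 quantifies over $*$-homomorphisms \emph{of} $A\times_X P$, i.e.\ over strongly covariant representations, and says nothing about the unit-fibre kernel of an arbitrary injective Nica-covariant representation $t$ of $X$ that is not known to factor through $A\times_X P$. The ``lower-bound property'' you invoke --- that $\|f\|_{A\times_X P}\le\|t_*(f)\|$ for all $f\in[\N\T(X)]_e$ and all injective Nica-covariant $t$ --- is exactly the unit-fibre form of co-universality, i.e.\ the main theorem of this paper (Theorem~\ref{T:co-univ} together with the present statement); it is not ``built into'' \cite{Seh18}, and before this paper it was only available under the extra hypotheses of \cite{CLSV11} or for abelian lattice orders \cite{DK20}. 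So as written, your verification of condition (iii) assumes the thing being proved.

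The paper avoids this trap by never verifying condition (iii) for $\ca_\la(\S\C X)$ directly: it applies Theorem~\ref{T:co-univ} once, to the \emph{single} representation of $X$ inside $A\times_X P$ (injective by \cite{Seh18}, Nica-covariant by Proposition~\ref{P:sc lcm}, equivariant by the coaction), obtaining $\phi\colon A\times_X P\to\cenv(\T_\la(X)^+,G,\dep)$. Since $\phi$ \emph{is} a homomorphism of $A\times_X P$ faithful on $A$, Sehnem's Theorem 3.10 now legitimately yields injectivity on $[A\times_X P]_e$, hence on every fibre of $\S\C X$; Exel's minimality \cite[Theorem 3.3]{Exe97} then gives a surjection $\cenv(\T_\la(X)^+,G,\dep)\to\ca_\la(\S\C X)$, which is faithful because the coaction on the envelope is normal (Corollary~\ref{C:normal}), so its expectation is faithful. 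Note also that your closing ``consistency check'' already contains a correct way to finish without condition (iii): from $\ker\Phi\subseteq\ker j_*$ and Proposition~\ref{P:P cover}, $\ca_\la(\S\C X)$ is a C*-cover of the cosystem, so the universal property of the C*-envelope (Theorem~\ref{T:co-env}) gives an equivariant surjection $\Psi\colon\ca_\la(\S\C X)\to\cenv(\T_\la(X)^+,G,\dep)$; precomposing $\Psi$ with $A\times_X P\to\ca_\la(\S\C X)$ and applying Theorem 3.10 of \cite{Seh18} shows $\Psi$ is faithful on the unit fibre, and faithfulness of the reduced expectation plus equivariance then forces $\Psi$ to be injective. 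On either route, co-universality of $\ca_\la(\S\C X)$ is a \emph{consequence} of Theorem~\ref{T:co-univ} and the isomorphism, not an input.
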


\begin{proof}
For the first part, one argues as in the proof of Proposition~\ref{T:Fock is Fell}, since the strong covariance relations live in the fixed point algebra.
For the second part note that by \cite[Theorem 3.10]{Seh18} we have an equivariant, injective,  Nica-covariant representation of $X$ into $A \times_X P$. Hence the co-universality of  $\cenv(\T_\la(X)^+, G, \dep)$ proved in Theorem \ref{T:co-univ}
implies the existence of a $*$-epimorphism
\[
\phi:A \times_X P \longrightarrow  \cenv(\T_\la(X)^+, G, \dep),
\]
which is injective on $A \hookrightarrow A \times_{X} P$ and maps generators to generators. By \cite[Theorem 3.10]{Seh18}, $\phi$ is injective on $[A \times_X P]_e$ and so it is injective on $\S\C X$. Therefore  $\cenv(\T_\la(X)^+, G, \dep)$ becomes a cross sectional algebra of the bundle $\S\C X$ with a conditional expectation on $\S\C X_e$. By the minimality property of the reduced cross sectional algebra \cite[Theorem 3.3]{Exe97}, there is a canonical $*$-epimorphism
\[
\cenv(\T_\la(X)^+, G, \dep) \longrightarrow \ca_\la(\S\C X).
\]
By Theorem \ref{T:co-univ}, the coaction on  $\cenv(\T_\la(X)^+, G, \dep)$ is normal.
Therefore the conditional expectation on  $\cenv(\T_\la(X)^+, G, \dep)$ is faithful and thus the above $*$-epimorphism is faithful.
\end{proof}

Let us see the form of the strong covariance relations for compactly aligned product systems over right LCM semigroups.
The following is proved by Sehnem in \cite[Proposition 4.2]{Seh18} for quasi-lattices, but the same proof passes to right LCM-semigroups as well.
Notice that we consider the restriction to $X_F$ rather than the representation on the entire $X_F^+$.

\begin{proposition}\label{P:sc lcm}
Let $P$ be a right LCM subsemigroup of a group $G$ and let  $X$ be a compactly aligned product system over $P$. A representation $t = \{t_p\}_{p \in P}$ of $X$ is strongly covariant if and only if it is Nica-covariant and it satisfies
\[
\sum_{p \in F} t_{F, p}(k_p) |_{X_F} = 0 \Longrightarrow \sum_{p \in F} t_p(k_p) = 0
\]
for any finite $F \subseteq P$ and $k_p \in \K X_p$.
\end{proposition}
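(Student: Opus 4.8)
The plan is to follow Sehnem's argument for \cite[Proposition 4.2]{Seh18}, replacing the uniqueness of least common multiples available for quasi-lattices by the bookkeeping afforded by $\vee$-closed sets together with Propositions~\ref{P:star inv LCM} and~\ref{P:purified}. The starting observation is that for $f = \sum_{p \in F} \wt{t}_p(k_p) \in [\T(X)]_e$, each operator $t_{F',p}(k_p)$ preserves the $G$-grading of $X_{F'}^+ = \oplus_g X_{gF'}$, so that $Q_{e,F'}\Phi_{F'}(f)Q_{e,F'} = \sum_{p\in F} t_{F',p}(k_p)|_{X_{F'}}$ and hence $\|f\|_{F'} = \big\| \sum_{p\in F} t_{F',p}(k_p)|_{X_{F'}} \big\|$. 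Two facts about these seminorms drive the proof: first, enlarging $F'$ only shrinks the coefficient ideals $I_{r^{-1}(r\vee F')}$ and hence $X_{F'}$, so the net $(\|f\|_{F'})_{F'}$ is decreasing and $\lim_{F'} \|f\|_{F'} = \inf_{F'}\|f\|_{F'}$; second, in the compactly aligned right LCM setting this infimum is attained at a finite stage $F_0 \supseteq F$, so that $f \in \I_e$ precisely when $\sum_{p\in F}t_{F_0,p}(k_p)|_{X_{F_0}} = 0$.

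For the forward implication I would first note that each $t_{F'}$ is a Fock-type representation on $X_{F'}^+$ and is therefore Nica-covariant, just as the Fock representation itself is under compact alignment. Consequently $\Phi_{F'}$ annihilates every Nica-covariance defect $\wt{t}_p(k_p)\wt{t}_q(k_q) - \wt{t}_w(i_p^w(k_p)i_q^w(k_q))$, and likewise $\wt{t}_p(k_p)\wt{t}_q(k_q)$ when $pP\cap qP = \mt$. Since these defects lie in $[\T(X)]_e$ and all their $\|\cdot\|_{F'}$ vanish, they belong to $\I_e$; a strongly covariant $t$ must kill them, so $t$ is Nica-covariant. The kernel condition is then immediate from monotonicity: if $\sum_{p\in F}t_{F,p}(k_p)|_{X_F} = 0$ then $\|f\|_F = 0$, whence $\|f\|_{F'} = 0$ for all $F'\supseteq F$, so $f \in \I_e$ and strong covariance forces $\sum_{p\in F}t_p(k_p) = t_*(f) = 0$.

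For the converse, assume $t$ is Nica-covariant and satisfies the kernel condition; I must show $t_*$ vanishes on $\I_e$. Using Nica-covariance together with Proposition~\ref{P:purified}, I would reduce to elements $f = \sum_{p\in F}\wt{t}_p(k_p)$ supported on a finite $\vee$-closed set $F$, the passage to $\vee$-closed supports being where Proposition~\ref{P:star inv LCM} absorbs the ambiguity caused by nontrivial units. For such an $f \in \I_e$, attainment of the infimum provides a finite $F_0 \supseteq F$ with $\sum_{p\in F}t_{F_0,p}(k_p)|_{X_{F_0}} = 0$; extending the family by $k_p := 0$ for $p \in F_0\setminus F$ and applying the kernel condition with index set $F_0$ yields $\sum_{p\in F}t_p(k_p) = t_*(f) = 0$. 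Since $t_*$ is continuous and $\I_e$ is closed, this suffices once the reduction to finitely supported representatives is justified.

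The main obstacle I anticipate is precisely this last reduction, intertwined with the attainment claim: showing that the decreasing net $(\|f\|_{F'})_{F'}$ stabilizes at a finite stage for $f$ supported on a $\vee$-closed set, and that a general element of $\I_e$ is approximated by finitely supported elements of $\I_e$ compatibly with $t_*$. This is exactly where compact alignment and the right LCM property are indispensable: uniqueness of LCMs is what lets Sehnem identify $X_{F_0}$ concretely in the quasi-lattice case, and here the same role is played by choosing $F$ to be $\vee$-closed and invoking Propositions~\ref{P:star inv LCM} and~\ref{P:purified} so that the local right-LCM structure on $F$ controls the relevant compressions. Verifying that Sehnem's estimates survive this substitution, with no hidden appeal to the uniqueness of $p\vee q$, is the crux of the argument.
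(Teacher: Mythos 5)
Your forward direction is sound and is essentially the intended argument: compact alignment makes each $t_{F'}$ a Nica-covariant Fock-type representation, so every Nica defect $\wt{t}_p(k_p)\wt{t}_q(k_q)-\wt{t}_w(i_p^w(k_p)i_q^w(k_q))$ (and $\wt{t}_p(k_p)\wt{t}_q(k_q)$ when $pP\cap qP=\mt$) is killed by all $\Phi_{F'}$ and hence lies in $\I_e$; and exact vanishing at stage $F$ persists to every $F'\supseteq F$ because $X_{F'}\subseteq X_F$ is invariant under the block operators and their adjoints. The converse, however, rests on your ``attainment'' assertion: that for finitely supported $f$ the decreasing net $(\|f\|_{F'})_{F'}$ attains its infimum at a finite stage $F_0$, so that $f\in\I_e$ if and only if $\sum_{p\in F}t_{F_0,p}(k_p)|_{X_{F_0}}=0$ exactly. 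You correctly flag this as the crux, and it is a genuine gap: you give no argument for it, and nothing in the compactly aligned right LCM setting forces the seminorms to stabilize --- the ideals $I_{r^{-1}(r\vee F')}$ can strictly shrink along the net, which is exactly why Sehnem's definition of $\I_e$ is phrased as a limit rather than an evaluation at a finite stage. The proof the paper invokes (Sehnem's \cite[Proposition 4.2]{Seh18}, with $p\vee q$ replaced by a right LCM $w$) does not need and does not use attainment, so your converse, as structured, is not a completion of that argument but depends on an unestablished claim.

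The standard repair, which is where the kernel condition actually earns its keep, is quantitative rather than exact: since the condition is assumed for \emph{all} finite index sets, you may zero-extend $(k_p)_{p\in F}$ to any finite $\vee$-closed $F''\supseteq F$; the condition then says that $\sum_{p\in F''}t_{F'',p}(k_p)|_{X_{F''}}\mapsto \sum_{p\in F''}t_p(k_p)$ is a well-defined surjective $*$-homomorphism between C*-algebras (multiplicativity is where $\vee$-closedness of $F''$, Nica covariance of $t$ and of $t_{F''}$, and Proposition~\ref{P:star inv LCM} enter), hence contractive. This yields $\bigl\|\sum_{p\in F}t_p(k_p)\bigr\|\le\|f\|_{F''}$ at cofinally many stages, and letting $F''$ grow gives $t_*(f)=0$ for every finitely supported $f\in\I_e$ with no stabilization required. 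The reduction to finitely supported elements, which you also leave open, follows because $[\N\T(X)]_e$ is the closed directed union of the C*-subalgebras $B_{F,\wh{t}}$ over finite $\vee$-closed $F$ (Proposition~\ref{P:purified}), and for such a union the induced maps $B_{F,\wh{t}}/(J\cap B_{F,\wh{t}})\to [\N\T(X)]_e/J$ are injective $*$-homomorphisms, hence isometric, so any closed ideal $J$ is the closed union of its intersections $J\cap B_{F,\wh{t}}$. Finally, note that your outline never isolates the one genuinely new verification the paper records, namely that for a Nica-covariant $t$ and $\eta_r\in X_r I_{r^{-1}(r\vee F)}$ one has $\sum_{p\in F}t_p(k_p)t_r(\eta_r)=\sum_{r\in pP}t_r(i_p^r(k_p)\eta_r)$: this interaction between the ideals and Nica covariance is what makes all the block computations above survive the replacement of $p\vee q$ by $w$.
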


\begin{proof}
The proof is identical to that of \cite[Proposition 4.2]{Seh18} by replacing $p \vee q$ with $w$ whenever $p P \cap q P = w P$.
Note that the ideals are defined in such a way that if $F$ is a finite subset of $P$ and $r \in P$ then
\[
\sum_{p \in F} t_p(k_p) t_r(\eta_r) = \sum_{r \in pP} t_p(k_p) t_r(\eta_r) = \sum_{r \in p P} t_r( i_p^{r}(k_p) \eta_r)
\]
for all $\eta_r \in X_r \cdot I_{r^{-1}( r \vee F)}$ and every Nica-covariant representation $\{t_p\}_{p \in P}$ of $X$.
\end{proof}


Let $P$ be a unital subsemigroup of a group $G$ and $X$ be a product system over $P$ with coefficients in $A$.
Let $q_\la \colon \T(X) \to \T_\la(X)$ be the canonical $*$-epimorphism.
Another interesting C*-algebra related to Sehnem's algebra  can be obtained from  $\T_\la(X)$ by taking 
the quotient of $\T_\la(X)$ by the ideal $q_\la(\I_\infty)$. We would like to analyze this quotient 
and discuss its relation with the cross sectional algebra $\ca_\la(\S\C X)$.
It is easy to see that the ideal $q_\la(\I_\infty)$ of  $\T_{\la}(X)$ is induced, hence  $\redcov$  inherits from  $\T_{\la}(X)$ 
the coaction of $G$ and, with it, a topological grading \cite[Proposition~23.10]{Exe17}.
By \cite[Theorem 3.3]{Exe97}, we then have an equivariant $*$-epimorphism
\[
\redcov \longrightarrow   \ca_\la(\S\C X),
\]
which is known to be an isomorphism if,  for instance, $G$ is exact.

We see that
the representations $\Phi_F$ from \eqref{E:repsPhi} used to define the strong covariance relations are  sub-representations of $\ol{\de}_\la \colon \T_\la(X) \to \T_\la(X) \otimes \ca_\la(G)$ for $\ol{\de}_\la = (\id \otimes \la) \ol{\de}$ where $\ol{\de}$ is the normal coaction on the Fock representation.
Indeed we can identify
\[
X_F^+ = \oplus_{g \in G} \oplus_{r \in P} X_r I_{r^{-1}(r \vee gF)}
\]
with a submodule of $\F X \otimes \ell^2(G)$ through the isometry given by
\[
X_{r} I_{r^{-1}(r \vee gF)} \ni \eta_r \mapsto \eta_r \otimes e_g \in X_r \otimes \ell^2(G).
\]
Recall here that $\F X \otimes \ell^2(G)$ is the exterior tensor product of two modules (seeing $\ell^2(G)$ as a module over $\bC$), and there is a faithful $*$-homomorphism
\[
\T_\la(X) \otimes \ca_\la(G) \subseteq \L(\F X) \otimes \B(\ell^2(G)) \hookrightarrow \L(\F X \otimes \ell^2(G)).
\]
We then see that
\[
t_{F,p}(\xi_p) = (\ol{t}_p(\xi_p) \otimes \la_p)|_{X_F^+} = \ol{\de}_\la(\ol{t}_p(\xi_p))|_{X_F^+}
\foral
p \in P,
\]
and likewise for their adjoints.
Thus $X_F^+$ is reducing under $\ol{\de}_\la(\T_\la(X))$.
Recall also that $X_F$ is reducing for $[\T(X)]_e$ as the range of the projection $Q_{e,F}$ and so we obtain the representation
\[
\bigoplus\limits_{ F \subseteq G \textup{ finite} } \Phi_F(\cdot)|_{X_F} \colon [\T(X)]_e \longrightarrow [\T_\la(X)]_e \longrightarrow \prod\limits_{ F \subseteq G \textup{ finite} }  \B(X_F).
\]
In particular almost by definition we have for an $f \in \T(X)$ that
\[
f \in \I_e \qiff \bigoplus\limits_{ F \subseteq G \textup{ finite} }  \Phi_F(f)|_{X_F} \in c_0(\B(X_F) \mid F \subseteq G \textup{ finite} ).
\]
By definition we then get that the following diagram 
\[
\xymatrix{
[\T(X)]_e \ar[d] \ar[rr] & & [\T_\la(X)]_e \ar[rr] \ar[d] & & 
\prod\limits_{ F \subseteq G \textup{ finite} }  \B(X_F) \ar[d] \\
[A \times_X P]_e \ar[rr] & & [\redcov]_e \ar[rr] & & 
\quo{\prod\limits_{ F \subseteq G \textup{ finite} }  \B(X_F)}{c_0(\B(X_F) \mid  F \subseteq G \textup{ finite}) }
}
\]
is commutative.
Consequently the $e$-graded $*$-algebraic relations in $\T_\la(X)$ induce strong covariance relations;  this is why strong covariance relations are Nica-covariant.
In particular we obtain the following corollary.

\begin{corollary}\label{C:red seh inj}
Let $P$ be a unital subsemigroup of a group $G$ and let $X$ be a product system over $P$
with coefficients in $A$.
Then $A \hookrightarrow \redcov$.
Moreover a $*$-homomorphism of $\redcov$ is faithful on $A$ if and only if it is faithful on $\left[ \redcov \right]_e$.
\end{corollary}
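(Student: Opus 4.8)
The plan is to route everything through a canonical map relating Sehnem's covariance algebra to $\redcov$ and then invoke Sehnem's fixed‑point dichotomy. Since $q_\la \colon \T(X) \to \T_\la(X)$ carries $\I_\infty$ into $q_\la(\I_\infty)$, the composite $\T(X) \xrightarrow{q_\la} \T_\la(X) \to \redcov$ annihilates $\I_\infty$ and hence factors through a canonical equivariant $*$-epimorphism $\kappa \colon A \times_X P \to \redcov$ sending each generator $\xi_p \in X_p$ to its image in $\redcov$. Because $\kappa$ is a surjection intertwining the two coactions, for $y \in [\redcov]_e$ one picks $x$ with $\kappa(x)=y$ and applies the conditional expectations $E$ and $E'$ to get $\kappa(E(x)) = E'(\kappa(x)) = y$ with $E(x) \in [A \times_X P]_e$; thus $\kappa$ restricts to a surjection $\kappa|_e \colon [A \times_X P]_e \to [\redcov]_e$.

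The heart of the argument is to upgrade $\kappa|_e$ to an isomorphism, and for this I use the commutative diagram established just above together with the identity $\I_e = \{f \in [\T(X)]_e : \bigoplus_F \Phi_F(f)|_{X_F} \in c_0\}$. Since the representations $\Phi_F$ factor through $\T_\la(X)$, the map $[\T(X)]_e \to \prod_F \B(X_F) \to (\prod_F \B(X_F))/c_0$ has kernel exactly $\I_e$, and by commutativity it equals the composite $[\T(X)]_e \to [A \times_X P]_e \xrightarrow{\kappa|_e} [\redcov]_e \to (\prod_F \B(X_F))/c_0$. As $[A \times_X P]_e = [\T(X)]_e/\I_e$ (by the definition of $A \times_X P = \T(X)/\I_\infty$ and the fact that $\I_\infty$ is induced with $\I_\infty \cap [\T(X)]_e = \I_e$), the surjection $[\T(X)]_e \to [A \times_X P]_e$ also has kernel $\I_e$; comparing kernels forces the map $[A \times_X P]_e \to (\prod_F \B(X_F))/c_0$ obtained via $[\redcov]_e$ to be injective, whence $\kappa|_e$ is injective. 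Being a bijective $*$-homomorphism, $\kappa|_e$ is an isomorphism $[A \times_X P]_e \cong [\redcov]_e$.

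With this $e$-fiber isomorphism both assertions are immediate. Since $A \hookrightarrow A \times_X P$ faithfully and $A \subseteq [A \times_X P]_e$, applying $\kappa|_e$ gives $A \cong \kappa(A) \subseteq [\redcov]_e$, which is precisely the natural copy of $A$ in $\redcov$; hence $A \hookrightarrow \redcov$. For the dichotomy, the ``if'' direction is trivial because $A \subseteq [\redcov]_e$. For the converse, let $\rho \colon \redcov \to B$ be a $*$-homomorphism faithful on $A$. Then $\rho \circ \kappa \colon A \times_X P \to B$ is faithful on $A$, so by \cite[Theorem 3.10]{Seh18} it is faithful on $[A \times_X P]_e$; since $\rho \circ \kappa$ restricted to $[A \times_X P]_e$ factors as $\rho|_{[\redcov]_e} \circ \kappa|_e$ with $\kappa|_e$ an isomorphism, $\rho$ is faithful on $[\redcov]_e$, as required.

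The step I expect to require the most care is the injectivity of $\kappa|_e$: it hinges on matching the kernel of the passage into $(\prod_F \B(X_F))/c_0$ with $\I_e$ on both the full and the reduced side, which is exactly what the factorization of the $\Phi_F$ through $\T_\la(X)$ and the commutativity of the preceding diagram are arranged to provide.
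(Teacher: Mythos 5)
Your construction of the canonical equivariant $*$-epimorphism $\kappa \colon A \times_X P \to \redcov$ and your expectation argument showing that $\kappa$ maps $[A \times_X P]_e$ \emph{onto} $[\redcov]_e$ are both correct, and your deductions from there are logically valid. The problem is the pivotal step you yourself flagged: the injectivity of $\kappa|_e$ rests entirely on the identity $\I_\infty \cap [\T(X)]_e = \I_e$, which you justify with the phrase ``the fact that $\I_\infty$ is induced with $\I_\infty \cap [\T(X)]_e = \I_e$.'' Inducedness of $\I_\infty$ means only that $\I_\infty = \sca{\I_\infty \cap [\T(X)]_e}$; it gives the easy inclusion $\I_e \subseteq \I_\infty \cap [\T(X)]_e$ but says nothing about the reverse inclusion. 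That reverse inclusion --- equivalently, that $[\T(X)]_e / \I_e \to [A \times_X P]_e$ is injective --- is a genuinely nontrivial point of Sehnem's theory: it amounts to the strong covariance of the representation of all of $\T(X)$ into $\prod_F \L(X_F^+)/c_0$, which requires controlling $\|\Phi_F(f)\|$ on the whole of $X_F^+$, i.e.\ $\sup_g \|f\|_{gF}$, and not merely the compressed norms $\|f\|_F = \|Q_{e,F}\Phi_F(f)Q_{e,F}\|$ that define $\I_e$ (note that $gF \supseteq F_0$ can fail even when $F \supseteq F_0$, so the limit condition does not transfer trivially). This is neither proved in the present paper nor properly cited by you, so as written your argument has a gap at exactly its load-bearing joint; to keep your stronger claim that $\kappa|_e$ is an isomorphism you would need to extract the fiber identification $[A \times_X P]_e \cong [\T(X)]_e/\I_e$ from \cite{Seh18} explicitly.

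The gap is also avoidable, and this is where the paper's proof is leaner than yours: it never needs $\kappa|_e$ to be injective. For the ``moreover,'' surjectivity of $\kappa|_e$ (which you did establish) already suffices: if $\rho$ is faithful on $A$, then $\rho \kappa$ is faithful on $A$, hence faithful on $[A \times_X P]_e$ by \cite[Theorem 3.10]{Seh18}, and any $y \in [\redcov]_e$ with $\rho(y) = 0$ lifts to some $x \in [A \times_X P]_e$ with $\rho\kappa(x) = 0$, forcing $x = 0$ and thus $y = 0$; the converse direction is trivial. For the embedding $A \hookrightarrow \redcov$, the paper chases the copy of $A$ through the displayed commutative diagram into $\prod_F \B(X_F) / c_0$ and invokes $A \cap c_0(\B(X_F) \mid F \subseteq G \textup{ finite}) = \{0\}$, the main argument of \cite[Lemma 3.6]{Seh18}: injectivity of the composite forces injectivity of $A \to \redcov$. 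So both assertions follow from ingredients the paper already has in hand, without deciding whether $\kappa|_e$ is faithful.
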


\begin{proof}
The proof that $A \hookrightarrow \redcov$ follows by combining the commutative diagram
\[
\xymatrix{
\T(X) \ar[d] \ar[rr] & & \T_\la(X) \ar[d] \\
A \times_X P \ar[rr] & & \redcov
}
\]
of $*$-epimorphisms with the fact that $$A \cap c_0(\B(X_F) \mid F \subseteq G \textup{ finite} ) = \{0\},$$ which is the main argument in \cite[Lemma 3.6]{Seh18}.
The rest now follows by combining this with \cite[Theorem 3.10]{Seh18}.
\end{proof}

Surprisingly,  the key to injectivity is the normality of the coaction of $G$ on $\redcov$.

\begin{corollary}\label{C:exa Seh}
Let $P$ be a right LCM subsemigroup of a group $G$ and  let $X$ be a compactly aligned product system over $P$ with coefficients from $A$.
Then the equivariant $*$-epimorphism 
\[
\redcov \longrightarrow  \ca_\la(\S\C X)
\]
is faithful if and only if the coaction of $G$ on $\redcov$ is normal.
\end{corollary}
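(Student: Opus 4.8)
The plan is to establish the two implications separately, each of which reduces to the interplay between normality of a coaction and faithfulness of its conditional expectation. Write $q \colon \redcov \to \ca_\la(\S\C X)$ for the canonical equivariant $*$-epimorphism. Since $A \times_X P = \ca(\S\C X)$ factors through $\redcov$ before reaching $\ca_\la(\S\C X)$, and the full-to-reduced map is isometric on each fibre, the map $q$ restricts to an isomorphism on every fibre of the common grading $\S\C X$; in particular $q|_e \colon [\redcov]_e \to \S\C X_e$ is an isomorphism. Being equivariant, $q$ also intertwines the conditional expectations $E \colon \redcov \to [\redcov]_e$ and $E_\la \colon \ca_\la(\S\C X) \to \S\C X_e$ coming from the two coactions, so that $E_\la \circ q = q|_e \circ E$.

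For the forward direction, suppose $q$ is faithful, hence an equivariant $*$-isomorphism. By Theorem~\ref{T:co-un is Fell} there is an equivariant $*$-isomorphism $\ca_\la(\S\C X) \simeq \cenv(\T_\la(X)^+, G, \dep)$, and by Theorem~\ref{T:co-univ} the coaction on $\cenv(\T_\la(X)^+, G, \dep)$ is normal. Normality is the injectivity of $(\id \otimes \la)\de$, and this property is preserved under an equivariant $*$-isomorphism, since such an isomorphism tensored with $\id$ intertwines the two reduced coactions. Transporting along $q$ we conclude that the coaction of $G$ on $\redcov$ is normal.

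For the converse, assume the coaction of $G$ on $\redcov$ is normal; I claim $E$ is then faithful. Writing the $e$-th Fourier coefficient as $F_e = \tau_\la \circ \la$ for the canonical faithful trace $\tau_\la$ on $\ca_\la(G)$, we have $E = (\id \otimes \tau_\la)(\id \otimes \la)\de$. A positive element killed by $E$ is killed by $(\id \otimes \la)\de$, because the slice by the faithful trace $\tau_\la$ is faithful on positives of the minimal tensor product; normality then forces the element to vanish. This is exactly the implication ``normal coaction $\Rightarrow$ faithful conditional expectation'' already used at the end of the proof of Theorem~\ref{T:co-un is Fell}. On the other hand, $\ca_\la(\S\C X)$ carries the faithful conditional expectation $E_\la$ of a reduced cross-sectional algebra by \cite[Theorem 3.3]{Exe97}. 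Now take $a \in \ker q$ with $a \geq 0$; intertwining gives $q|_e(E(a)) = E_\la(q(a)) = 0$, so $E(a) = 0$ because $q|_e$ is an isomorphism, and faithfulness of $E$ yields $a = 0$. Hence $q$ is faithful.

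The main obstacle is the converse direction, and within it the step deducing faithfulness of $E$ from normality of the coaction. This rests on the faithfulness of the slice map determined by the canonical trace on $\ca_\la(G)$, which in turn uses that the unit vector at the identity is cyclic and separating for the group von Neumann algebra. Once this is granted, the identification of $\ker q$ with the null ideal of $E$ via the intertwining relation and the fibrewise isomorphism is routine, and parallels the faithfulness argument concluding the proof of Theorem~\ref{T:co-un is Fell}.
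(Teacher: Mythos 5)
Your proof is correct, but the second half takes a genuinely different route from the paper, so let me compare. Your forward direction (faithfulness of $q$ implies normality) is exactly the paper's: transport the normal coaction of $\cenv(\T_\la(X)^+, G, \dep) \simeq \ca_\la(\S\C X)$ (Theorems~\ref{T:co-univ} and~\ref{T:co-un is Fell}, ultimately Corollary~\ref{C:normal}) back through the equivariant isomorphism. For the converse, both you and the paper start from ``normal coaction $\Rightarrow$ faithful conditional expectation'' (which you justify correctly via the slice by the canonical faithful trace on $\ca_\la(G)$), so that faithfulness of $q$ reduces to faithfulness on the fixed point algebra. But the paper then invokes Corollary~\ref{C:red seh inj} --- which rests on Sehnem's Theorem 3.10 and the $c_0$-argument of \cite[Lemma 3.6]{Seh18} --- to reduce further to faithfulness on $A$, and verifies that through the co-universality chain $A \hookrightarrow \T_\la(X)^+ \hookrightarrow \cenv(\T_\la(X)^+, G, \dep) \simeq \ca_\la(\S\C X)$. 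You instead prove directly that $q$ is isometric on the $e$-fibre, using that the regular representation $\ca(\S\C X) \to \ca_\la(\S\C X)$ factors through $\redcov$ and is isometric on each fibre; combined with the intertwining of expectations and the faithfulness of the reduced expectation, this in fact identifies $\ker q$ with the null ideal of $E$. Your route is more self-contained for this direction --- it bypasses Corollary~\ref{C:red seh inj}, hence Sehnem's injectivity theorem, needing only Fell-bundle generalities and the identification $\ca(\S\C X) \simeq A \times_X P$ from Theorem~\ref{T:co-un is Fell} --- whereas the paper's argument is shorter given its previously established corollaries, which also carry independent interest. The one point you should make explicit rather than assert: that the composite $\ca(\S\C X) \to \redcov \to \ca_\la(\S\C X)$ is the canonical full-to-reduced map, and that the fibres of the grading of $\redcov$ are precisely the images of the fibres of $A \times_X P$ (the latter is \cite[Proposition~23.10]{Exe17}, implicit in the paper's construction of $q$); both are routine checks on generators using equivariance, but they are exactly where the identification of the two gradings is used.
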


\begin{proof}
First suppose that the coaction of $G$ on $\redcov$ is normal.
Then the equivariant $*$-epimorphism $\redcov \to \ca_\la(\S\C X)$ is faithful if and only if it is faithful on the fixed point algebra.  By  Corollary \ref{C:red seh inj}, this happens  if and only if it is faithful on $A$, which is the case because, by Theorem \ref{T:co-univ} and Theorem \ref{T:co-un is Fell}, we have
\[
A \hookrightarrow \T_\la(X)^+ \hookrightarrow \cenv(\T_\la(X)^+, G, \dep) \simeq \ca_\la(\S\C X). 
\]
Conversely suppose that the equivariant $*$-epimorphism is faithful.
By Corollary \ref{C:normal}, the coaction $\delenv$ on $\cenv(\T_\la(X)^+, G, \dep)$  is normal and  normality passes to $\redcov$ via the equivariant map.
\end{proof}

\section{Reduced Hao-Ng isomorphisms}

Let $\fG$ be a discrete group.
Suppose that an operator algebra $\fA$ admits a $\fG$-action $\alpha$ by completely isometric automorphisms $\alpha_{\fg}$ for $\fg \in \fG$. The $\fG$-action extends to an action on $\cenv(\fA)$ and one can form the reduced C*-crossed product $ \cenv(\fA) \rtimes_{\al, \la} \fG$.
 Following Katsoulis and Ramsey \cite[Definition 3.17]{KR16}, we define the nonselfadjoint reduced crossed product $\fA \rtimes_{\al, \la} \fG$ as the norm-closed subalgebra of $ \cenv(\fA) \rtimes_{\al, \la} \fG$ spanned by the canonical generators of $ \cenv(\fA) \rtimes_{\al, \la} \fG$ associated with $\fA$ and $\fG$. If $\fA \subseteq \B(H)$ and the $\fG$-action $\alpha$ extends to the $\ca$-algebra generated by $\fA$, then \cite[Corollary 3.16]{KR16} shows that $\fA \rtimes_{\al, \la} \fG$ is completely isometrically isomorphic to the subalgebra of $\B(H) \otimes \B(\ell^2(\fG))$ generated by 
\begin{align*}
\pi(a)&=\sot -\sum_{\fh \in \fG}\alpha_{\fh^{-1}}(a)\otimes P_{\fh}, \,\, \forall a \in \fA \\
U_{\fg}&=I\otimes u_{\fg}, \,\, \forall \fg \in \fG,
\end{align*}
where $P_{\fh}$, $\fh \in \fG$, denotes the projection on the $\fh$-coordinate of $\ell^2(\fG)$.

It is well known that the reduced crossed product of $\ca$-algebras is a covariant functor from the category of $\ca$-dynamical systems with morphisms the $\fG$-equivariant $*$-homomorphisms to the category of $\ca$-algebras. Specifically if $(C, \sigma , \fG)$ and $(D, \rho, \fG)$ are $\ca$-dynamical systems and $\phi: C\rightarrow D$ is an equivariant $*$-homomorphism, i.e., $\phi \sigma_{\fg}=\rho_{\fg}\phi$, for all  $\fg \in \fG$, then there exists a map 
\[
\phi\rtimes \id : C\rtimes_{\sigma, \lambda} \fG \longrightarrow D\rtimes_{\rho, \lambda} \fG 
\]
which acts as the identity on the copy of $\fG$ and satisfies $ \phi\rtimes \id (c) = \phi(c)$, for all $c \in C$.
Since the reduced crossed product of an operator algebra is defined as a subset of a $\ca$-algebra crossed product, we obtain that the crossed product of operator algebras is also a covariant functor in the above sense from the category of dynamical systems with morphisms the $\fG$-equivariant completely isometric homomorphisms to the category of operator algebras with morphisms the completely isometric homomorphisms.

Katsoulis \cite[Theorem 2.5]{Kat17} has shown that
\[
\cenv(\fA \rtimes_{\al, \la} \fG) \simeq \cenv(\fA) \rtimes_{\al, \la} \fG.
\]
The isomorphism also holds for C*-envelopes of cosystems whose coactions are equivariant with respect to the actions $\alpha$ and $\alpha \otimes \id$ of $\fG$ on $\fA$ and $\fA\otimes  \ca(G)$ respectively, as the next result shows
\begin{proposition}\label{P:cp env}
Let $(\fA, G, \de)$ be a (normal) cosystem.
Let $\fG$ be a group acting on $\fA$ by completely isometric automorphisms  $\alpha_{\fg}$ for $\fg \in \fG$,  such that
\begin{equation} \label{eq;equivfG}
\de \al_{\fg} = (\al_{\fg} \otimes \id) \de
\foral
\fg \in \fG.
\end{equation}
Then $G$ induces a {(normal resp.)} coaction $\widetilde{\de \rtimes \id}$ on $\fA \rtimes_{\al, \la} \fG$ and
\[
\cenv(\fA \rtimes_{\al, \la} \fG, G, \widetilde{\de \rtimes \id}) \simeq \cenv(\fA, G, \de) \rtimes_{\al, \la} \fG.
\]
\end{proposition}

\begin{proof}
For convenience suppose that $\fA \subseteq \cenv(\fA) \subseteq \B(H)$ and $\ca(G) \subseteq \B(K)$.
 The action $\al \otimes \id$ of $\fG$ on $\cenv(\fA) \otimes \ca(G)$ gives rise to the crossed product $[\cenv(\fA) \otimes \ca(G) ] \rtimes_{\alpha\otimes \id, \la} \fG$.
To make a distinction we write
\[
\pi(a) \in \B(H) \otimes \B(\ell^2(\fG))
\qand
\pi'(a \otimes u_g) \in \B(H) \otimes \B(K) \otimes \B(\ell^2(\fG))
\]
for all $a \in \fA_g$, $g \in G$, while we use the symbols
\[
U_{\fg} \in \B(H) \otimes \B(\ell^2(\fG))
\qand
U_{\fg}' \in \B(H) \otimes \B(K) \otimes \B(\ell^2(\fG))
\]
for the different shifts that define the crossed products
\[
\cenv(\fA) \rtimes_{\alpha,\la} \fG
\qand
[\cenv(\fA) \otimes \ca(G) ] \rtimes_{\alpha \otimes \id,\la} \fG.
\]
Up to a unitary interchanging of $K$ with $\ell^2(\fG)$, we get the $*$-isomorphism 
\[
\Phi \colon [\cenv(\fA) \otimes \ca(G) ] \rtimes_{\alpha \otimes \id,\la} \fG \longrightarrow [\cenv(\fA) \rtimes_{\alpha, \la} \fG] \otimes \ca(G)
\]
given by
\[
\Phi[\pi'(a \otimes u_g) U_{\fg}'] = (\pi(a) U_{\fg}) \otimes u_g
\]
for all $a \in \fA_g$, $g \in G$ and $\fg \in \fG$.
 Consider the completely isometric map
\[
\widetilde{\de \rtimes \id} := \Phi \circ (\de \rtimes \id) \colon \fA \rtimes_{\alpha, \la}  \fG \longrightarrow (\fA \rtimes_{\alpha, \la}  \fG) \otimes \ca(G).
\]
Note that
\begin{align*}
\widetilde{\de \rtimes \id}(\pi(a) U_{\fg}) &= \Phi\left((\de\rtimes \id)(\pi(a)U_{\fg})\right) \\
      &=\Phi(\pi'(a\otimes u_g)U_{\fg}') \\
      &=(\pi(a) U_{\fg}) \otimes u_g ,
\end{align*}
for all $a \in \fA_g$, $g \in G$ and $\fg \in \fG$.
Therefore, $\widetilde{\de \rtimes \id}$ is a completely isometric map which satisfies the coaction identity on $\fA \rtimes_{\al, \la} \fG$. 
Since $\de$ is non-degenerate, we have that $\sum_{g \in G} \fA_g$ is dense in $\fA$, so that also $\sum_{g \in G} [\fA \rtimes_{\al, \la} \fG]_g$ is dense in $\fA \rtimes_{\al, \la} \fG$. 
Hence $(\fA \rtimes_{\al, \la} \fG, G, \widetilde{\de \rtimes \id})$ is a cosystem.

When $\de$ is normal then we can deduce that $\widetilde{\de \rtimes \id}$ is also normal by working with $\ca_\la(G) \subseteq \B(\ell^2(G))$ and $\de_\la$, in place of $\ca(G) \subseteq \B(K)$ and $\de$, respectively.
This will give that the $*$-homomorphism
\[
(\id_{\fA \rtimes_{\al, \la} \fG} \otimes \la) \widetilde{\de \rtimes \id}= \widetilde{\de_\la \rtimes \id} \colon \fA \rtimes_{\al, \la} \fG \mapsto (\fA \rtimes_{\al, \la} \fG) \otimes \ca_\la(G)
\]
and thus $\de \rtimes \id$ is normal.

For the second part we will use the realization of the C*-envelope of a cosystem from Theorem \ref{T:co-env}.  Since we are assuming that $\fA \subseteq \cenv(\fA)$, the proof of Theorem \ref{T:co-env} (and in particular (\ref{eq;correction})) establishes a $*$-isomorphism
\[
\de_*: \cenv(\fA , G, \de)\longrightarrow  \cenv(\fA)\otimes \ca(G),
\]
which coincides with $\de$ on $\fA$. We also have a similar $*$-isomorphism
\[
(\widetilde{\de \rtimes \id})_*: \cenv(\fA \rtimes_{\al, \la} \fG, G,\widetilde{\de \rtimes \id}) \longrightarrow \cenv(\fA \rtimes_{\al, \la} \fG) \otimes \ca(G).
\]
Now \cite[Theorem 2.5]{Kat17} provides a $*$-isomorphism  
\[
\phi: \cenv(\fA \rtimes_{\al, \la} \fG) \longrightarrow \cenv(\fA) \rtimes_{\al, \la} \fG,
\]
and so we have the following faithful $*$-homomorphisms
\[
\xymatrix{
\cenv(\fA \rtimes_{\al, \la} \fG, G,\widetilde{\de \rtimes \id}) \ar[rr]^{(\widetilde{\de \rtimes \id})_{*}} \ar@{<.>}[dd]& & \cenv(\fA \rtimes_{\al, \la} \fG) \otimes \ca(G) \ar[d]^{\phi\otimes \id} \\
& & (\cenv(\fA) \rtimes_{\al, \la} \fG) \otimes \ca(G) \ar[d]^{\Phi^{-1}} \\
\cenv(\fA, G, \de) \rtimes_{\al, \la} \fG & & (\cenv(\fA) \otimes \ca(G) ) \rtimes_{\al \otimes \id, \la} \fG \ar[ll]^{(\de_{*})^{-1}\rtimes \id}
}
\]
whose composition establishes the desired $*$-isomorphism.
\end{proof}

Next we discuss the application of Proposition~\ref{P:cp env} to cosystems. A
 \emph{generalized gauge action of $\fG$ on $\T_\la(X)$} is an action $\al \colon \fG \to \Aut(\T_\la(X))$ that satisfies
\[
\al_{\fg}(\ol{t}_p(X_p)) = \ol{t}_p(X_p) \foral p \in P \text{ and } \fg \in \fG.
\]
Then $\al$ preserves $\T_\la(X)^+$ and thus we get the nonselfadjoint crossed product $\T_\la(X)^+ \rtimes_{\al, \la} \fG$. The ideal $q_\la(\I_\infty)$ of strong covariance in $\T_\la(X)$ is $\al$-invariant, so that $\al$ descends to a group action of $\fG$ on $\redcov$ which is also a generalized gauge action and gives the reduced crossed product $(\redcov) \rtimes_{\al, \la} \fG$.

Given a faithful representation $\rho \rtimes U $ of $\T_\la(X) \rtimes_{\al, \la} \fG$, where $\rho$ is a representation  of $\T_\la(X)$ and $U$ a unitary representation of $\fG$ such that 
$\rho\circ \al_\fg(\cdot) =U_\fg \rho(\cdot)U_\fg^*$, we can define a product system by using the concrete representations of $X$ and $\fG$.
To this end, for every $p \in P$ we define
\[
X_p \rtimes_{\al, \la} \fG := \ol{\spn} \{\rho(\ol{t}_p(\xi_p)) U_{\fg} \mid \xi_p \in X_p, \fg \in \fG\}.
\]
Since $\rho \al_{\fg}(f) = U_{\fg} \rho(f) U_{\fg}^*$ for all $f \in \T_\la(X)$ we can also write
\[
X_p \rtimes_{\al, \la} \fG := \ol{\spn} \{U_{\fg} \rho(\ol{t}_p(\xi_p)) \mid \xi_p \in X_p, \fg \in \fG\}.
\]
Consequently
\[
\ol{ (X_p \rtimes_{\al, \la} \fG) \cdot (X_q \rtimes_{\al, \la} \fG)} = X_{pq} \rtimes_{\al, \la} \fG,
\]
and thus the family
\[
X \rtimes_{\al, \la} \fG := \{X_p \rtimes_{\al, \la} \fG\}_{p \in P}
\]
defines a product system over $P$ with coefficients from $A \rtimes_{\al, \la} \fG$.
Furthermore we can write
\begin{align*}
\K(X_p \rtimes_{\al, \la} \fG) 
& = 
\ol{\spn} \{ \rho(\ol{t}_{p}(k_p)) U_{\fg} \mid k_p \in \K X_p, \fg \in \fG \} \\
& =
\ol{\spn} \{ U_{\fg} \rho(\ol{t}_{p}(k_p)) \mid k_p \in \K X_p, \fg \in \fG \}.
\end{align*}
As $X \rtimes_{\al, \la} \fG$ is defined concretely we get that
\[
i_{p}^{pq} \left( \rho(\ol{t}_p(k_p)) U_{\fg} \right) \rho(\ol{t}_{pq}(\xi_{pq}) U_{\fh})
=
\rho(\ol{t}_p(k_p)) U_{\fg} \rho(\ol{t}_{pq}(\xi_{pq}) U_{\fh}).
\]
Moreover we see that
\[
U_{\fg} \rho(\ol{t}_{p}(k_p)) \cdot
\rho(\ol{t}_{p}(k_p)) U_{\fh}
=
\rho( \al_{\fg} (\ol{t}_p(k_p) \ol{t}_q(k_q)) ) U_{\fg \fh}.
\]
As $\al_{\fg}$ defines automorphisms on the compact operators then a proof similar to \cite[Proposition 6.3]{DK20} (see also \cite[Proposition 3.2]{Kat20} for a shorter proof) shows that compact alignment of $X$ implies compact alignment of $X \rtimes_{\al, \la} \fG$.

\begin{proposition} \label{Prop;ToepHao}
Let $P$ be a right LCM subsemigroup of a group $G$ and let $X$ be a compactly aligned product system over $P$ with coefficients from $A$.
Let $\al \colon \fG \to \Aut(\T_\lambda(X))$ be a generalized gauge action  by a discrete group $\fG$. Then
\[
\T_\la(X) \rtimes_{\al, \la} \fG \simeq \T_\la(X \rtimes_{\al, \la} \fG)
\qand
\T_\la(X)^+ \rtimes_{\al, \la} \fG \simeq \T_\la(X \rtimes_{\al, \la} \fG)^+.
\]
\end{proposition}
\begin{proof}
Assume that $\T_\la(X) \subseteq \B(H)$ and for specificity let 
\begin{align*}
\rho(\ol{t}_p(\xi_p))&=\sot -\sum_{\fh \in \fG}\alpha_{\fh^{-1}}(\ol{t}_p(\xi_p))\otimes P_{\fh}, \,\,  \xi_p \in X_p, p \in P \\
U_{\fg}&=I\otimes u_{\fg}, \,\,  \fg \in \fG,
\end{align*}
acting on $H\otimes \ell^2(\fG)$. From Proposition~\ref{P:P coa B} we obtain a faithful representation
\[
\T_\la(X) \longrightarrow \T_\la(X) \otimes \ca_\la(P) : \ol{t}_p(\xi_p) \longmapsto \ol{t}_p(\xi_p)\otimes V_p
\]
and from this representation of $\T_\la(X)$ we produce the (faithful) induced representation of $\T_\la(X) \rtimes_{\al, \la} \fG$  on $H\otimes \ell^2(G)\otimes \ell^2(\fG)$ with  
\begin{equation} \label{eqq1} 
\T_\la(X) \rtimes_{\al, \la} \fG \ni  \rho(\ol{t}_p(\xi_p))U_{\fg}\longmapsto  \sot- \sum_{\fh \in \fG}\ol{t}_p(\alpha_{\fh^{-1}}(\xi_p)) \otimes V_p \otimes P_{\fh}u_{\fg}.
\end{equation}
On the other hand, Proposition~\ref{P:P coa B} applied to the identity representation of $X \rtimes_{\al, \la} \fG$, which is Nica covariant, gives a faithful representation of $\T_{\lambda}(X \rtimes_{\al, \la}\fG)$ with 
\begin{equation} \label{eqq2}
 \T_\la(X \rtimes_{\al, \la} \fG) \ni \ol{t}_p\left( \rho (\ol{t}_p(\xi_p))U_{\fg}\right) \longmapsto 
 \sot- \sum_{\fh \in \fG} \ol{t}_p(\alpha_{\fh^{-1}}(\xi_p)) \otimes P_{\fh}u_{\fg} \otimes V_p .
\end{equation}
The right sides of (\ref{eqq1}) and (\ref{eqq2}) are unitarily equivalent via the unitary that switches $\ell^2(P)$ and $\ell^2(\fG)$. Hence the algebras on the left of these equations are isomorphic with an isomorphism that preserves the corresponding tensor algebras.
\end{proof}
Since we are dealing with  two different  product systems at once, we write $\I_\infty(X) $ and $\I_\infty(  X \rtimes_{\al, \la} \fG )$ to distinguish the two relevant strong covariance ideals.
\begin{theorem}\label{T:hao ng}Let $P$ be a right LCM subsemigroup of a group $G$ and let $X$ be a compactly aligned product system over $P$ with coefficients from $A$.
Let $\al \colon \fG \to \Aut(\T_\lambda(X))$ be a generalized gauge action  by a discrete group $\fG$.
Then
\[
\ca_\la(\S\C(X \rtimes_{\al, \la} \fG)) \simeq \ca_\la(\S\C X) \rtimes_{\al, \la} \fG. 
\]
If in addition  the coaction of $G$ on $A \rtimes_{\al, \la} \fG$ is normal, which is the case e.g.\ when $G$ is exact,  then
\begin{equation}\label{E:co-haong}
 \T_\la(X \rtimes_{\al, \la} \fG) /q_\la(\I_\infty(  X \rtimes_{\al, \la} \fG ))    \simeq (\T_\la(X) /\I_\infty(X)) \rtimes_{\al, \la} \fG.
\end{equation}
\end{theorem}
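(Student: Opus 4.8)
The plan is to reduce both isomorphisms to three tools already in hand: the identification of the equivariant C*-envelope with the reduced cross-sectional algebra of Sehnem's bundle (Theorem~\ref{T:co-un is Fell}), the crossed-product formula for C*-envelopes of cosystems (Proposition~\ref{P:cp env}), and the faithfulness criterion of Corollary~\ref{C:exa Seh}. The bridge between the two sides is the operator-algebra identification $\T_\la(X)^+ \rtimes_{\al,\la}\fG \simeq \T_\la(X\rtimes_{\al,\la}\fG)^+$ recorded before the statement, which I must first upgrade to an isomorphism of cosystems. The first isomorphism will come out unconditionally; the normality hypothesis enters only in the second.

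For the first isomorphism I would begin by checking that $\al$ is compatible with the gauge coaction in the sense required by Proposition~\ref{P:cp env}, namely $\dep\,\al_\fg = (\al_\fg\otimes\id)\,\dep$ on $\T_\la(X)^+$. This is immediate on the generators $\ol{t}_p(\xi_p)$: since a generalized gauge action preserves each $\ol{t}_p(X_p)$, the element $\al_\fg(\ol{t}_p(\xi_p))$ again lies in the degree-$p$ spectral subspace, so both sides equal $\al_\fg(\ol{t}_p(\xi_p))\otimes u_p$, and the identity extends by linearity and continuity. Next I would verify that the identification $\T_\la(X)^+\rtimes_{\al,\la}\fG\simeq\T_\la(X\rtimes_{\al,\la}\fG)^+$ carries the induced coaction $\dep\rtimes\id$ to the gauge coaction $\dep$ of the crossed-product product system: both send a generator $\ol{t}_p(\xi_p)U_\fg$ of $X_p\rtimes_{\al,\la}\fG$ to $\ol{t}_p(\xi_p)U_\fg\otimes u_p$, so the identification is $G$-equivariant, hence an isomorphism of cosystems. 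With these two points in place the first isomorphism follows from
\begin{align*}
\ca_\la(\S\C(X\rtimes_{\al,\la}\fG))
&\simeq \cenv(\T_\la(X\rtimes_{\al,\la}\fG)^+, G, \dep) \\
&\simeq \cenv(\T_\la(X)^+\rtimes_{\al,\la}\fG, G, \dep\rtimes\id) \\
&\simeq \cenv(\T_\la(X)^+, G, \dep)\rtimes_{\al,\la}\fG \\
&\simeq \ca_\la(\S\C X)\rtimes_{\al,\la}\fG,
\end{align*}
where the first and last isomorphisms are Theorem~\ref{T:co-un is Fell} applied to $X\rtimes_{\al,\la}\fG$ and to $X$ (both compactly aligned over the right LCM semigroup $P$), the second is the cosystem identification just described, and the third is Proposition~\ref{P:cp env}.

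For the second isomorphism I would route everything through the reduced cross-sectional algebras, so as never to compare the strong covariance ideals $\I_\infty(X)$ and $\I_\infty(X\rtimes_{\al,\la}\fG)$ directly. Under the normality hypothesis, Corollary~\ref{C:exa Seh} applied to $X\rtimes_{\al,\la}\fG$ turns the canonical equivariant epimorphism into an isomorphism with $\ca_\la(\S\C(X\rtimes_{\al,\la}\fG))$, while Corollary~\ref{C:exa Seh} applied to $X$ gives $\redcov \simeq \ca_\la(\S\C X)$; the latter maps generators to generators, hence intertwines the descended $\fG$-actions and survives the reduced crossed product. Combining these with the first isomorphism yields
\begin{align*}
\T_\la(X\rtimes_{\al,\la}\fG)/q_\la(\I_\infty(X\rtimes_{\al,\la}\fG))
&\simeq \ca_\la(\S\C(X\rtimes_{\al,\la}\fG)) \\
&\simeq \ca_\la(\S\C X)\rtimes_{\al,\la}\fG \\
&\simeq (\T_\la(X)/\I_\infty(X))\rtimes_{\al,\la}\fG,
\end{align*}
as required. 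Both applications of Corollary~\ref{C:exa Seh} require the relevant coaction of $G$ to be normal; this is exactly what the hypothesis provides, and in the clean case where $G$ is exact it holds because normality descends from the normal coactions on $\T_\la(X)$ and $\T_\la(X\rtimes_{\al,\la}\fG)$ of Proposition~\ref{P:f coaction} through the induced ideals $q_\la(\I_\infty(\cdot))$.

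The main obstacle I anticipate is bookkeeping rather than conceptual: I must ensure that all the identifications are \emph{simultaneously} completely isometric, $G$-equivariant for the gauge coactions, and $\fG$-equivariant for the generalized gauge action, since the C*-envelope of a cosystem and the reduced crossed product are only functorial for maps respecting the relevant structure. The crucial step is confirming that the identification $\T_\la(X)^+\rtimes_{\al,\la}\fG\simeq\T_\la(X\rtimes_{\al,\la}\fG)^+$ intertwines $\dep\rtimes\id$ with the gauge coaction $\dep$, since this is what licenses feeding Proposition~\ref{P:cp env} into Theorem~\ref{T:co-un is Fell}; once the coactions are matched on the generators $\ol{t}_p(\xi_p)U_\fg$ the remaining verifications are routine.
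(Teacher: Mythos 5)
Your proposal is correct and follows essentially the same route as the paper: identify $\T_\la(X\rtimes_{\al,\la}\fG)^+$ with $\T_\la(X)^+\rtimes_{\al,\la}\fG$ as cosystems (the paper does this via the equivariant isomorphism $\Psi$), feed this into Proposition~\ref{P:cp env}, convert both C*-envelopes to reduced cross-sectional algebras via Theorem~\ref{T:co-un is Fell}, and obtain the second isomorphism from Corollary~\ref{C:exa Seh} applied to both product systems under the normality hypothesis. If anything, you are more explicit than the paper, which leaves the compatibility check $\dep\,\al_\fg=(\al_\fg\otimes\id)\,\dep$ and the matching of $\dep\rtimes\id$ with the gauge coaction implicit.
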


\begin{proof}
For convenience set $Y := X \rtimes_{\al, \la} \fG$.
By construction we see that 
\[
 (\T_\la(X) \otimes \ca(G)) \rtimes_{\al \otimes \id, \la} \fG
\simeq^{\Psi}
(\T_\la(X) \rtimes_{\al, \la} \fG) \otimes \ca(G).
\]
In particular $\Psi$ is equivariant and by restricting $\Psi$ we get
\[
 (\T_\la(X)^+ \otimes \ca(G)) \rtimes_{\al \otimes \id, \la} \fG
\simeq
(\T_\la(X)^+ \rtimes_{\al, \la} \fG) \otimes \ca(G).
\]
By Theorem \ref{T:co-univ}, Proposition \ref{P:cp env} and Proposition~\ref{Prop;ToepHao} we have that
\begin{align*}
\cenv(\T_\la(Y)^+, G, \widetilde{\ol{\de} \rtimes \id}) 
 &\simeq
\cenv(\T_\la(X)^+ \rtimes_{\al, \la} \fG, G, \Psi(\widetilde{\ol{\de} \rtimes \id})) \\
 & \simeq
\cenv(\T_\la(X)^+, G, \dep) \rtimes_{\al, \la} \fG,
\end{align*}
and now an application of Theorem \ref{T:co-un is Fell} proves \eqref{E:co-haong}. The second part follows from Corollary \ref{C:exa Seh} and the proof is complete.
\end{proof}

\begin{acknow}
The authors are thankful to the anonymous referee whose comments and suggestions improved the exposition in the paper.

Part of the research was carried out during the Focused Research Group   20frg248: Noncommutative Boundaries for Tensor Algebras at the Banff International Research Station.

Adam Dor-On was supported by the NSF grant DMS-1900916 and by the European Union's Horizon 2020 Marie Sklodowska-Curie grant No 839412. 

Evgenios Kakariadis acknowledges support from EPSRC as part of the programme ``Operator Algebras for Product Systems'' (EP/T02576X/1). 

Elias Katsoulis was partially supported by the NSF grant DMS-2054781.

Marcelo Laca was partially supported by NSERC Discovery Grant RGPIN-2017-04052. 

Xin Li has received funding from the European Research Council (ERC) under the European Union’s Horizon 2020 research and innovation programme (grant agreement No. 817597).
\end{acknow}


\end{document}